\documentclass[oneside, reqno]{amsart}

\newcommand{\hide}[1]{}

\usepackage{amssymb}
\usepackage{graphicx}
\usepackage{amsmath}

\usepackage{enumitem}
\usepackage{bm}

\usepackage{mathtools}
\usepackage{graphics}
\usepackage{xcolor}
\usepackage{multicol}

\usepackage{textpos}
\usepackage{subfigure}
\usepackage{framed} 
\usepackage{tikz}
\usepackage{tikz-cd}
\usepackage{commath}
\usepackage{nicematrix}
\usepackage{geometry}
\usepackage{float}

\usepackage[doi=false,isbn=false,url=false,eprint=false, style=alphabetic, backend=bibtex,maxbibnames=99]{biblatex}
\usetikzlibrary{shapes, arrows.meta, positioning}

\usepackage[colorlinks=true,urlcolor=blue]{hyperref}

\DeclareMathAlphabet{\mathbbold}{U}{bbold}{m}{n}

\usepackage[title,titletoc,toc]{appendix}

\theoremstyle{definition}

\newcommand{\ITM}{\mathrm{ITM}}
\newcommand{\IET}{\mathrm{IET}}

\newcommand{\R}{\mathbb{R}}

\renewcommand{\ge}{\geqslant}
\renewcommand{\le}{\leqslant}

\newcommand{\betas}{\beta_*}
\newcommand{\betass}{\beta_{**}}

\numberwithin{figure}{section}
\numberwithin{equation}{section}

\theoremstyle{plain}
\newtheorem{theorem}{Theorem}[section]

\newtheorem*{conjecture*}{Conjecture}
\newtheorem*{mconjecture}{Boshernitzan--Kornfeld Conjecture}
\newtheorem*{mtheoremI}{Main Theorem I}
\newtheorem*{mtheoremII}{Main Theorem II}
\newtheorem*{thm:stable=acc+m}{Theorem \ref{thm:stability=accm}}
\newtheorem*{thm:acc+m-approx-ep}{Theorem \ref{thm:ep->acc+m}}
\newtheorem*{theorem*}{Theorem}
\newtheorem{lemma}[theorem]{Lemma}

\theoremstyle{remark}

\theoremstyle{definition}
\newtheorem{definition}[theorem]{Definition}

\usepackage{xpatch}
\makeatletter
\AtBeginDocument{\xpatchcmd{\@thm}{\thm@headpunct{.}}{\thm@headpunct{}}{}{}}
\makeatother

\newcounter{lstv}

\newcounter{lsta}

\author[Kostiantyn Drach]{Kostiantyn Drach}
\author[Leon Staresinic]{Leon Staresinic}
\author[Sebastian van Strien]{Sebastian van Strien}

\address{Universitat de Barcelona (Gran Via de les Corts Catalanes, 585, 08007 Barcelona, Spain)}
\address{Centre de Recerca Matem\`atica (Edifici C, Carrer de l'Albareda, 08193 Bellaterra, Spain)}
\email{kostiantyn.drach@ub.edu}

\address{University of Z\"urich (190 Winterthurerstrasse, 8057 Z\"urich, Switzerland)}
\email{leon.staresinic@math.uzh.ch}

\address{Imperial College London (180 Queen's Gate, South Kensington, London SW7 2AZ, UK)}
\email{s.van-strien@imperial.ac.uk}

\title[Topological Prevalence of Finite Type Interval Translation Maps]{Topological Prevalence of Finite Type Interval Translation Maps} 
\date{\today} 

\thanks{The first author was partially supported from grants CNS2025-166633 (AEI), PID2023-147252NB-I00 (AEI), CEX2020-001084-M (Maria de Maeztu Excellence program), and the ERC Advanced Grant ``SPERIG'' (\#885707). The second author acknowledges the support from Imperial College London through the Roth PhD scholarship and the Swiss National Science Foundation through grant number TMCG-2\_213663·2023. The third author acknowledges a partial sponsorship via Bj\"orn Winckler's Marie Curie postdoctoral fellowship \#743959.
\newline 
\indent The content of this paper is part of an earlier manuscript \cite{drach2025densitystableintervaltranslation}, which is available on arXiv. That manuscript has been split into a three-part series: \cite{drach2026transversalityintervaltranslationmaps}, \cite{drach2026characterisationstabilityintervaltranslation}, and the present paper.}

\begin{document}

\maketitle

\begin{abstract} An \emph{interval translation map} ($\ITM$) is a map $T \colon I \to I$ defined as a piecewise translation on a finite partition of an interval $I$ into $r \ge 2$ subintervals. Unlike classical interval exchange transformations ($\IET$s), the images of these subintervals are allowed to overlap, making $\ITM$s a natural generalisation of $\IET$s.

An $\ITM$ $T$ is said to be \emph{of finite type} if its attractor $\bigcap_{n\ge 0}  T^n(I)$ is a finite union of intervals; in this case, restricted to this invariant set, $T$ is bijective and hence behaves like an $\IET$. Otherwise, $T$ is \emph{of infinite type}. In this paper, for every $r \ge 2$, we prove that the set of finite type $\ITM$s contains an open and dense subset in the space of all possible $\ITM$s on $r$ subintervals. This confirms a topological version of a long-standing conjecture due to Boshernitzan and Kornfeld. 
\end{abstract}

\section{Introduction}
\label{sec:intro}

\textit{Interval translation maps} ($\ITM$s) are orientation-preserving piecewise isometries of an interval. As such, they are generalizations of the well-known interval exchange transformations ($\IET$s), obtained by dropping the bijectivity assumption on the mapping. They were introduced by Boshernitzan and Kornfeld in \cite{MR1356616}. See Figures~\ref{fig:ITM} and \ref{fig:IET} for comparison examples between an $\ITM$ and an $\IET$.

We fix an interval $I := [0,1)$. More formally, given $r \ge 2$ and two sets of parameters:
\begin{itemize}
    \item[--]
    \emph{discontinuities} $0 = \beta_0 < \beta_1 < \beta_2 < \dots < \beta_{r-1} < \beta_r = 1$ and 
    \item[--] 
    \emph{translation factors} $\gamma_1, \gamma_2, \dots, \gamma_r \in \R$,
\end{itemize}
an interval translation map $T \colon I \to I$ is defined for each $1 \le i \le r$ as
\[
T(x) := x + \gamma_i, \quad x \in [\beta_{i-1}, \beta_i).
\]
The condition $T(I) \subset I$ imposes the constraints $\gamma_i \in [-\beta_{i-1},\, 1 - \beta_i]$ for all $1 \le i \le r$. Therefore, the \emph{parameter space} $\ITM(r)$ of $\ITM$s on $r$ intervals identifies with a convex polytope in $\mathbb{R}^{2r-1}$. We endow $\ITM(r)$ with the induced metric and topology.

\begin{figure}[th]
    \centering
    \begin{minipage}{0.45\textwidth}
        \centering
        \vspace{-1mm}
        \includegraphics[width=1.1\textwidth, trim={18 15 15 10},clip]{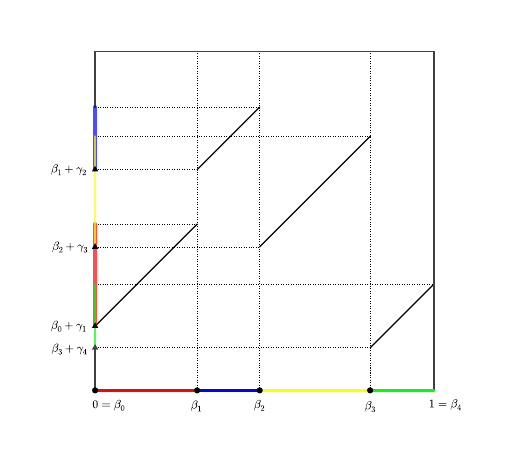}
        \vspace*{-6mm}
        \caption{$\ITM$ on $4$ intervals}\label{fig:ITM}
    \end{minipage}\hfill
    \begin{minipage}{0.45\textwidth}
        \centering
        \vspace{-2mm}
        \includegraphics[width=1.1\textwidth, trim={20 15 20 10},clip]{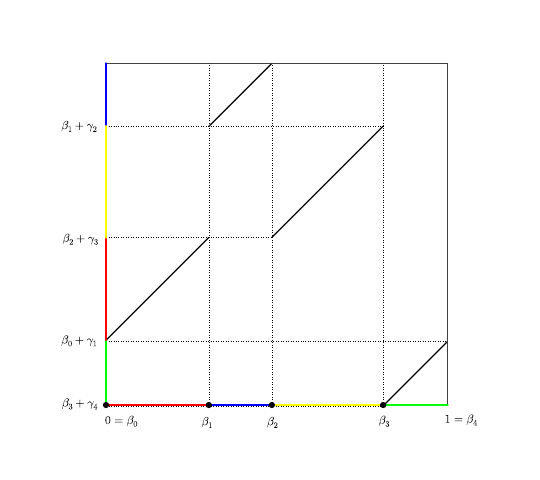}
        \vspace*{-6mm}
        \caption{$\IET$ on $4$ intervals}\label{fig:IET}
    \end{minipage}
\end{figure}

A fundamental distinction between $\ITM$s and $\IET$s is that, for a typical $T \in \ITM(r)$, one has $T(I) \subsetneq I$. Therefore, the sets $X_n := T^n(I)$ form a sequence of sets nested as $I = X_0 \supsetneq X_1 \supseteq X_2 \supseteq X_3 \supseteq \dots$. Each of these sets is a finite union of intervals, and hence their intersection 
\[
X := \bigcap_{n \ge 0} X_n
\]
is a non-empty \emph{attractor} of the system. This further leads to a natural dichotomy: $T$ is said to be of \emph{finite type} if $X_{n+1} = X_n = X$ for some $n$ (and hence for all larger indices), and of \emph{infinite type} if $X_{n+1} \subsetneq X_n$ for all $n$.

In the case $r=2$, all $\ITM$s are of finite type, and the attractor $X$ is a single interval on which the induced map is conjugate to a rotation. In general, it is known that the closure $\overline{X}$ of $X$ in $[0,1]$ decomposes as a disjoint union $A_1 \sqcup A_2$ (one of these sets may be empty), where $A_1$ is a finite union of intervals and $A_2$ is a Cantor set (see Lemma~\ref{lem:x-structure}). Moreover, it was shown in \cite{MR1796167} that $T$ is of finite type if and only if $X$ is a finite union of intervals, and that if $T$ is of infinite type and topologically transitive on $X$, then $\overline{X(T)}$ is a Cantor set. Finally, by \cite{drach2026transversalityintervaltranslationmaps}, the set $\overline{X} \setminus \{1\}$ is the non-wandering set of $T$.

The first explicit example of an infinite type map was constructed in \cite{MR1356616} by Boshernitzan and Kornfeld for $r=3$. They conjectured that such behavior should be rare:

\begin{mconjecture}
\label{conj:inf-type-zero}
For all $r \ge 2$, the set of all infinite type $\ITM$s on $r$ intervals is a measure zero subset of $\ITM(r)$.    
\end{mconjecture}

This conjecture has been the focus of a large part of research in the field of $\ITM$s, but remains widely open, except in a few cases.
Namely, it was first established for $r=3$ and a special $2$-parameter family of $\ITM$s in \cite{MR2013352} by Bruin and Troubetzkoy. This family supports a renormalization scheme analogous to the Rauzy--Veech induction for $\IET$s. The dynamical properties of this renormalization imply that the measure of the set of all infinite type maps is zero, analogous to how the analysis of the Rauzy--Veech induction shows that almost all minimal $\IET$s are uniquely ergodic (\cite{MR644018}, \cite{MR644019}) and weakly mixing (\cite{MR2299743}). Generalising the results for Rauzy--Veech induction used in these proofs, generic unique ergodicity and weakly mixing for infinite type maps in the family from ~\cite{MR2013352} were established in ~\cite{MR4973368} and ~\cite{ artigiani2026typicalweakmixingexceptional}, respectively (see also \cite{bruin2023interval} for related results). Figures \ref{fig:bt-fin} and \ref{fig:bt-inf} show the parameter space of the special family from \cite{MR2013352}: the right figure (taken from \cite{MR2013352}) shows the set of infinite type maps, while the union of couloured regions in the left figure\footnote{The picture is by Bj\"orn Winckler.} is the set of all finite type maps. These triangles are dynamically related through renormalization, see \cite{MR2013352} for details.

\begin{figure}[H]
    \centering
    \begin{minipage}{0.48\textwidth}
        \centering
        \includegraphics[width=0.9\textwidth]{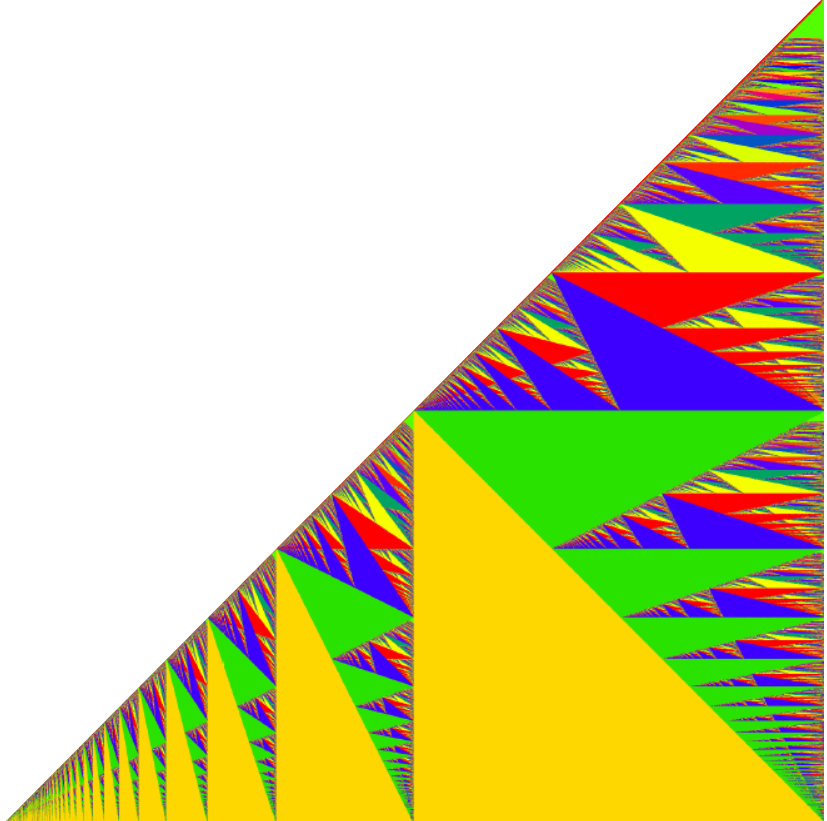}
        \caption{Finite type maps from the family in \cite{MR2013352}}\label{fig:bt-fin}
    \end{minipage}\hfill
    \begin{minipage}{0.48\textwidth}
        \centering
        \includegraphics[width=\textwidth]{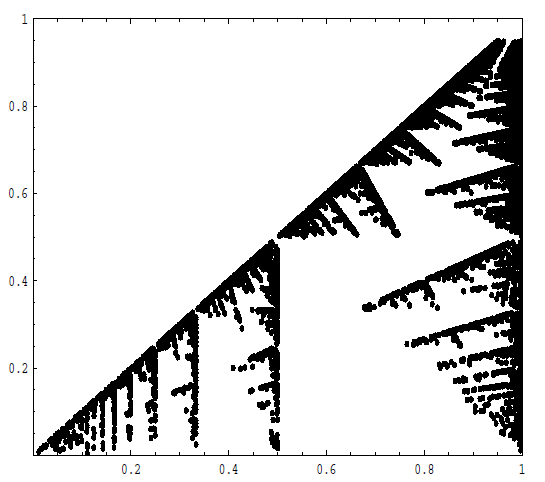}
        \caption{Infinite type maps from the family in \cite{MR2013352}}\label{fig:bt-inf}
    \end{minipage}
\end{figure}

A similar method of renormalization was used for $r \le 4$ and a more complicated family of \textit{double rotations} introduced in \cite{MR2152403}, and studied further in \cite{MR2966738} and \cite{MR4397159}. The conjecture was established in full for $r=3$ in \cite{MR3124735} by showing that almost every $\ITM$ on three intervals can be renormalized to a double rotation. Unfortunately, there has been little progress on developing a renormalization scheme for $\ITM$s on an arbitrary number of intervals (except in some special cases \cite{MR2308208}).

In this paper, instead of using renormalization, we build on the stability theory of $\ITM$s developed in \cite{drach2026characterisationstabilityintervaltranslation} and the transversality results from \cite{drach2026transversalityintervaltranslationmaps} to establish the topological version of the Boshernitzan--Kornfeld Conjecture:

\begin{mtheoremI}[Topological Prevalence of Finite Type Maps]
\label{mtheoremI}
For every $r \ge 2$, the set of all finite type $\ITM$s on $r$ intervals contains an open and dense subset of $\ITM(r)$.    
\end{mtheoremI}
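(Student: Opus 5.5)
The plan is to derive Main Theorem I from the stability theory of \cite{drach2026characterisationstabilityintervaltranslation} and the transversality results of \cite{drach2026transversalityintervaltranslationmaps}. Call $T\in\ITM(r)$ \emph{stable} if it has a neighbourhood in $\ITM(r)$ on which the combinatorics of the attractor persists. By this we mean that the number of intervals of $X$, the way $T$ permutes them, and the orbit relations among the discontinuities $\beta_i$ and the endpoints of the $X_n$ all stay the same.

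The proof splits into two statements.
\begin{enumerate}
\item \emph{Openness:} the set $\mathcal{S}$ of stable finite type maps is open.
\item \emph{Density:} $\mathcal{S}$ is dense in $\ITM(r)$.
\end{enumerate}
Then $\mathcal{S}$ is the required open dense subset of the finite type maps.

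For openness, I would use the characterisation from the companion paper. There, stability of a finite type map is expressed through ``accessibility'' and ``matching'' conditions: each endpoint of each component of $X$ is reached, in finitely many steps, as the image of a discontinuity or an endpoint of $I$, and the orbits of the relevant discontinuities eventually land on finitely many prescribed points. These conditions consist of finitely many strict inequalities between quantities of the form $\beta_i + \sum(\text{translations})$, together with finitely many linear equalities. In a suitable chart these equalities are automatically satisfied, because they express $T^N(I)=T^{N+1}(I)$ for a fixed $N$. Each $X_n$ with $n\le N+1$ depends piecewise-linearly and continuously on the parameters as long as the combinatorics of the first $N+1$ iterates is fixed, and that combinatorics is itself fixed by strict inequalities. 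So for nearby maps $X_N = X_{N+1}$ still holds, which gives openness.

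For density, I would start from an arbitrary $T$ and an $\varepsilon>0$ and first perturb $T$ to be generic, meaning that no unforced linear relations hold among the parameters over $\mathbb{Q}$. If $T$ is already of finite type, I would apply transversality, which says that generic finite type maps satisfy the stability conditions, and the density step is done. If $T$ is of infinite type, then by Lemma~\ref{lem:x-structure} the closure $\overline{X}$ has a Cantor part $A_2$. Here I would perturb one translation factor $\gamma_i$ by a small amount chosen so that some discontinuity $\beta_j$ lying in (or accumulating on) $A_2$ acquires a periodic or matching orbit. I expect the transversality theorem of \cite{drach2026transversalityintervaltranslationmaps} to state that the relevant orbit points move with nonzero speed as functions of the parameter. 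If so, such a coincidence occurs at parameters arbitrarily close to $T$, and it creates a new interval component of the attractor while reducing the Cantor part. I would then iterate this step, using some complexity measure (for instance the number of discontinuities whose orbits are not yet matched) that strictly decreases at each step. After at most $r-1$ steps the resulting map is of finite type and stable, and it lies within $\varepsilon$ of $T$.

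The main obstacle is the density step for infinite type maps. I need to show that a small perturbation really turns the Cantor part into intervals without destroying what has already been achieved. The first approach I would try is the following: take a point of $A_2$ that is a limit of discontinuity orbits, and use transversality to force an exact return $T^n(\beta_j)=\beta_j$ or a landing $T^n(\beta_j)=\beta_k$. This would produce a periodic island or an eventual matching, and the previously stabilised parts persist by openness, provided the perturbation is small enough.
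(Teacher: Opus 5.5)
Your reduction to showing that \emph{the stable maps form an open dense set} is the same as the paper's. The openness half is in fact free: stability is defined by a neighbourhood condition, so $\mathcal S(r)$ is open by definition, and stable maps are of finite type by Lemma~\ref{lem:stable-fin-type}. No chart argument is needed; note also that the characterisation uses ACC, i.e.\ \emph{absence of critical connections}, and Matching, not accessibility conditions. Your finite-type branch of density is also sound. A violation of ACC or Matching forces an integer relation $\langle v,(\gamma\,\beta)\rangle=0$ (this is the argument of Theorem~\ref{thm:acc+m-full-m}), so a finite type map avoiding these countably many hyperplanes is stable. The genuine gap is the infinite-type branch, and you cannot sidestep it: nothing tells you a priori that generic maps near $T$ are of finite type.

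The step you sketch there does not work, for four reasons.
\begin{itemize}
\item Forcing $T^n(\beta_j)=\beta_j$ or $T^n(\beta_j)=\beta_k$ imposes an integer linear relation on $(\gamma\,\beta)$. The new map is therefore not generic, and your finite-type branch does not apply to it. If you perturb again to restore genericity, the relation breaks and the periodic island you created is replaced by a region on which the first return map is an $\ITM$ with irrational data, whose type you do not control.
\item Even granting the island, one periodic interval does not make $\overline X$ a finite union of intervals. A hole created in one place can propagate through the return map indefinitely; this is the phenomenon described in the overview of Section~\ref{sec:stab-approx-ep}, where the paper stresses that there is no usable criterion for finite type other than eventual periodicity.
\item Your induction has no invariant. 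Openness is a property of stable maps, and none of your intermediate maps is stable. So nothing \emph{previously stabilised} is guaranteed to persist, your complexity measure can go back up, and the bound of $r-1$ steps is unfounded.
\item The orbit of your $\beta_j$ accumulates on discontinuities, so its itinerary up to time $n$ survives only perturbations smaller than its closest approach to $\mathcal C$. Transversality alone does not show that the gap $|T^n(\beta_j)-\beta_j|$ can be closed within that range.
\end{itemize}

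The missing idea is to avoid infinite type maps entirely. Maps with rational $\gamma$ are eventually periodic, hence dense and of finite type (Lemmas~\ref{lem:ep-dense}, \ref{lem:disc-ep->map-ep}, \ref{lem:ep-fin-type}). The paper first arranges the correspondence property, so that each component $J$ of $X$ is tiled by iterates of a single $P(\beta^+)$. It then applies the Perturbation Lemma with perturbation sizes that are rational multiples of $|P(\beta^+)|$. Each perturbed return map is then conjugate to a rational $\ITM$, so every intermediate map stays eventually periodic, hence of finite type, while the unstable number $U$ strictly drops. At $U=0$ one gets A1, A2 and Matching, and a final perturbation breaking the connections outside $X$ gives A3.
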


This theorem is a simple consequence of the following more precise result:

\begin{mtheoremII}[Density of Stable Interval Translation Maps]
\label{mtheoremII}
The set $\mathcal{S}(r)$ of all stable interval translations maps on $r$ intervals is a dense subset of $\ITM(r)$.
\end{mtheoremII}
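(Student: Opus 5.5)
The plan is to use the characterisation of stability from \cite{drach2026characterisationstabilityintervaltranslation}: a map $T\in\ITM(r)$ is stable precisely when (i) it is of finite type with attractor $X$ a finite union of intervals, and (ii) every discontinuity $\beta_i$ lying in $\overline{X}$ (and every endpoint of $T(I)$) eventually lands, under the dynamics restricted to $X$, in a \emph{robust} configuration. Here robust means that no orbit of a discontinuity hits another discontinuity, no orbit of an image endpoint hits one, and there are no such coincidences between the boundary points of $X$. I will refer to such maps as being in \emph{accessible and minimal-free} position. Density of $\mathcal{S}(r)$ then reduces to this: starting from an arbitrary $T_0\in\ITM(r)$ and $\varepsilon>0$, produce $T\in\mathcal{S}(r)$ with $|T-T_0|<\varepsilon$.

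The first step is to reduce to the case where $T_0$ is of finite type. Suppose $T_0$ is of infinite type. By Lemma~\ref{lem:x-structure}, $\overline{X}=A_1\sqcup A_2$ with $A_2$ a nonempty Cantor set. I will perturb the translation factors so as to create a periodic interval that absorbs part of $A_2$. Concretely, pick a point of $A_2$ whose orbit returns close to itself. By the transversality results of \cite{drach2026transversalityintervaltranslationmaps}, the map from parameters to the displacement of an iterated discontinuity has full rank along suitable one-parameter families. This lets me adjust $\gamma$'s so that some discontinuity becomes periodic. A periodic discontinuity generates an open periodic interval (a rotation-type component), and this strictly reduces the \emph{complexity}, i.e.\ the number of intervals in $\{\beta_i\}\cap A_2$ that are not yet trapped. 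Iterating this at most $r-1$ times produces a finite-type map within $\varepsilon$ of $T_0$. To make sure the induction decreases, I will argue that each trapped discontinuity stays trapped under sufficiently small further perturbations, since periodic intervals persist.

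The second step starts from a finite-type $T_1$ and makes it stable. On $X$, $T_1$ acts as an $\IET$ on finitely many intervals. The non-robust coincidences form finitely many codimension-one conditions of the form ``$T^n(\beta_i)=\beta_j$'' or ``$T^n(\text{endpoint})=\beta_j$'', each with bounded $n$ because of finite type. The transversality theorem from \cite{drach2026transversalityintervaltranslationmaps} guarantees these conditions are given by nondegenerate affine equations in the parameters. Hence a generic small perturbation avoids all of them while keeping the finite-type structure, whose combinatorics is locally constant away from these coincidences. Combining this with the characterisation theorem yields stability.

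I expect the main obstacle to be the first step, specifically guaranteeing that perturbation genuinely destroys the Cantor part rather than creating a new one elsewhere. Infinite type is not an open condition, and a perturbation might swap one Cantor component for another. My first attempt would be to define a lexicographic complexity, namely the number of discontinuities with infinite non-periodic orbit inside $\overline{X}$, and to show via transversality that one can make one of them eventually periodic while keeping all the already-periodic ones periodic. This should give a strict decrease, since at most $r-1$ discontinuities exist.
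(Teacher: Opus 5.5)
There is a genuine gap, and it is in your second step rather than the first. You claim that a generic small perturbation of a finite type map $T_1$ that avoids the coincidence hyperplanes keeps ``the finite-type structure, whose combinatorics is locally constant away from these coincidences''. Nothing supports this.

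\begin{itemize}
\item \textbf{The step is circular.} Local constancy of the structure of $\overline{X}$ is exactly what stability means.
\item \textbf{Finite type is not an open condition.} Transversality only tells you that the coincidence loci are hyperplanes. It gives no control over the global dynamics of the perturbed map.
\item \textbf{Your characterisation omits Matching} (Definition~\ref{def:matching}), and this is where things break. Suppose the return map $R_J$ to a component $J$ of $X$ is an $\IET$ on three or more intervals. A generic perturbation makes $\tilde R_J$ a genuine $\ITM$ with overlaps. The hole it creates propagates under $\tilde R_J$, and nothing prevents these iterates from spreading through $J$ and producing an infinite type map. This is precisely the scenario in the hole-propagation figures of Section~\ref{sec:stab-approx-ep}.
\end{itemize}

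The real difficulty of the theorem is certifying that the perturbed map is still of finite type, and your proposal has no mechanism for this. Two smaller inaccuracies: for finite type maps with irrational-rotation components, the coincidence times $n$ are not bounded; and stable maps may keep self-connections $T^p(\beta)=\beta$ inside $X$, so ``no coincidences'' is not the right description of stability.

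The paper's missing idea is to work inside the class of eventually periodic maps. These are dense for a trivial reason: rational $\gamma$'s suffice (Lemma~\ref{lem:ep-dense}). They are also of finite type (Lemma~\ref{lem:ep-fin-type}). So your first step is unnecessary. As written it also fails, because periodic intervals do not persist: $Tr(P,p)=0$ is a codimension-one condition, so ``trapped'' discontinuities need not stay trapped under arbitrary small perturbations.

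The paper's argument runs as follows.
\begin{enumerate}
\item Starting from an eventually periodic map, perturb to obtain the correspondence property (Lemma~\ref{lem:pert-corr}). This forces each component $J$ of $X$ to be tiled by iterates of a single periodic interval $P(\beta^+)$.
\item Use the Perturbation Lemma~\ref{lem:pert-lem} to shift the translation factors of one or two continuity intervals of $R_J$ by small rational multiples of $|P(\beta^+)|$. These shifts are chosen so that a hole opened in $J$ is carried onto a discontinuity lying in a non-trivial cycle or in $\partial X$.
\item This removes that discontinuity from $X$ and strictly lowers the unstable number $U(T)$. Meanwhile $\tilde R_J$ stays conjugate to an $\ITM$ with rational parameters, so every discontinuity remains eventually periodic and finite type is guaranteed by Lemmas~\ref{lem:disc-ep->map-ep} and~\ref{lem:ep-fin-type}.
\item Once $U=0$ and correspondence holds, A1, A2 and Matching follow automatically (Lemma~\ref{lemma:corr+u=a1a2m}).
\item A final perturbation that breaks the critical connections outside $X$ gives A3.
\end{enumerate}
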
 

For the definition and overview of stability, see Subsection~\ref{subsec:stability}. 

\begin{proof}[Main Theorem II $\implies$ Main Theorem I]
By Main Theorem I, the stable maps form a dense subset of $\ITM(r)$. Since the set $\mathcal{S}(r)$ is open by definition (see Definition~\ref{def:stable}) and stable maps are of finite type (see Lemma~\ref{lem:stable-fin-type}), the result follows.
\end{proof}

To prove Main Theorem II, we need to show that the set of stable maps $\mathcal{S}(r)$ forms a dense subset of $\ITM(r)$. This is difficult to show directly, so we adopt the following approach. First, we use the following precise dynamical characterisation of stable maps, which is the main result of \cite{drach2026characterisationstabilityintervaltranslation}:

\begin{thm:stable=acc+m}[Characterisation of Stability]
A finite type map is stable if and only if it satisfies the ACC and Matching conditions.
\end{thm:stable=acc+m}

The definitions of the ACC and Matching conditions are recalled in Subsection~\ref{subsec:stability}. Next, we define a simple type of map, called \textit{eventually periodic maps}, that can be shown to be of finite type (Lemma \ref{lem:ep-fin-type}) and dense in $\ITM(r)$ (Lemma \ref{lem:ep-dense}). Crucially, they can also be approximated by maps satisfying the ACC and Matching conditions:

\begin{thm:acc+m-approx-ep}[ACC + Matching Approximates Eventually Periodic Maps]
For every eventually periodic map $T \in \ITM(r)$, there exists an arbitrarily small perturbation $\Tilde{T} \in \ITM(r)$ of $T$ that is of finite type and satisfies the ACC and Matching conditions.
\end{thm:acc+m-approx-ep}

The proof of this theorem is contained in Section \ref{sec:stab-approx-ep}, and the main ingredient is the Perturbation Lemma from \cite{drach2026transversalityintervaltranslationmaps}, stated here as Lemma~\ref{lem:pert-lem}.

\begin{proof}[Proof of the Main Theorem II]
By the Characterisation of Stability (Theorem \ref{thm:stability=accm}), a map is stable if and only if it satisfies the ACC and Matching conditions. By Theorem \ref{thm:ep->acc+m}, every eventually periodic map is accumulated upon in $\ITM(r)$ by maps satisfying the ACC and Matching conditions. Thus, the closure $\overline{\mathcal{S}(r)}$ of the set of stable maps in $\ITM(r)$ contains the set of eventually periodic maps. Since, by Lemma \ref{lem:ep-dense}, the eventually periodic maps are dense, we conclude $\overline{\mathcal{S}(r)} = \ITM(r)$.
\end{proof}

Main Theorem II also gives the following measure-theoretic result (proved in the Appendix \ref{appendix:ae-fin-stable}): almost every finite type map is stable, Theorem \ref{thm:ae-fin-stab}. Moreover, it also suggests a refinement of the Boshernitzan--Kornfeld Conjecture, in the following sense (see Theorem \ref{thm:bk->irr-rot}): If the Boshernitzan--Kornfeld Conjecture is true, then for almost every $\ITM$, the return map to every connected component of $X$, which must be an interval since $T$ is of finite type, is equivalent to an irrational circle rotation.

The structure of the paper is as follows. In Section \ref{sec:preliminaries} we recall the notation and main definitions for $\ITM$s. We also list the results used in the rest of the paper: basic dynamical facts about $\ITM$s (Subsection~\ref{subsec:basic-res-def}), the theory of stability for $\ITM$s (Subsection~\ref{subsec:stability}, following \cite{drach2026characterisationstabilityintervaltranslation}), and the Perturbation Lemma (Subsection~\ref{subsec:perturbation-lemma}, following \cite{drach2026transversalityintervaltranslationmaps}). In Section \ref{sec:ep-maps}, we introduce eventually periodic maps and establish some of their dynamical properties. In Section \ref{sec:stab-approx-ep} we prove Theorem \ref{thm:ep->acc+m}. In Appendix \ref{appendix:ae-fin-stable}, we prove that almost every finite type map is stable (Theorem \ref{thm:ae-fin-stab}) and establish a refinement of the Boshernitzan--Kornfeld Conjecture (Theorem \ref{thm:bk->irr-rot}).

\subsection{Overview of $\IET$s, $\ITM$s and piecewise isometries}
\label{subsec:history}

The subject of interval exchange transformations ($\IET$s) goes back to 
the 70's. Some the earliest publications were by Katok \cite{MR0331438}, Keane \cite{MR0357739}, \cite{MR0435353} and Veech \cite{MR0516048}. For a broad overview of $\IET$s, we recommend the survey papers \cite{MR2219821}, \cite{MR2648692}.  It is well-known that $\IET$s are connected to billiards, see for example \cite{MR0399423} and \cite{MR0644840}, \cite[Ch. 5]{MR0832433} and \cite[Ch.2, \S 4D]{MR0889254}, and to flows on flat surfaces, see \cite{MR2000471}, \cite{MR644019}, \cite{MR644018}, \cite{MR1393518}, \cite{MR1733872}, \cite{MR2261104}.

In the first part of the introduction, we covered the results about generic behavior and renormalization for $\ITM$s, so we mention the remaining topics here. The topological properties of general $\ITM$s were first studied by Schmeling and Troubetzkoy in \cite{MR1796167}. Buzzi proved in \cite{MR1855837} that all piecewise isometries on polytopes have zero topological entropy (for $\ITM$s this was remarked already in \cite{MR1356616}). Buzzi and Hubert proved in \cite{MR2054049} that all piecewise monotone maps without periodic points (with some necessary assumptions) are semi-conjugate to $\ITM$s (possibly with flips). They also showed that the upper bound for the number of ergodic measures of an $\ITM$ on $r$ intervals is $r-1$, in contrast to $\IET$s for which the bound is $\frac{r}{2}$ (for the precise result see \cite{MR2648692}). This bound was realized by the family considered by Bruin in \cite{MR2308208}. $\ITM$s were connected to billiards with {\lq}one-sided mirrors{\rq}, and analysed in \cite{MR3449199} and \cite{MR3403406}. Connecting $\ITM$s with flows on surfaces is a very interesting open problem.

$\ITM$s also appeared in the work of Levitt in 1993 (see \cite{MR1231840}) in the context of group action on $\mathbb{R}$-trees. There, he considered a two-parameter family of $\ITM$s related to the holonomy map of certain 1-dimensional foliations.

$\ITM$s are a special class of piecewise isometries. There is a vast literature on such systems, and so we mention just a small subset of it: \cite{MR1905204}, \cite{MR1938473}, \cite{MR1772421}, \cite{MR1738947}, \cite{MR2091702}, \cite{MR1992662}, \cite{MR2039048}, \cite{MR2221800}, \cite{MR2486783}, \cite{MR3010377}, \cite{MR4032960}, \cite{MR4075314}, \cite{MR4441154}. In \cite{MR4082258}, the authors connected 1D and 2D piecewise isometries by embedding $\IET$s into planar piecewise isometries.

\section{Preliminaries}
\label{sec:preliminaries}

\subsection{Notation and conventions} 
\label{subsec:not-conv}
In this subsection, we recall some of the main conventions and notation for $\ITM$s. Since the maps we are considering are discontinuous at finitely many points, we will adopt the standard convention of considering every discontinuity $\beta$ of a map $T$ as two points $\beta^- < \beta^+$ and define $T(\beta^-) := \lim_{x \uparrow \beta}$ and $T^{\beta^+} := \lim_{x \downarrow \beta}$. We double the preimages of the point $\beta$ in the same way. The images and preimages of $\beta^-, \beta^+$ will be called \textit{signed points}. In a similar way, we can define $x^- < x^+$, $T(x^-)$, $T(x^+)$, etc.\ for any point $x \in I$.

\begin{definition}
\label{def:touching}
We say that a pair of signed points $(a,b)$ \textit{touches} (or \textit{is touching}) if the set $\{a,b\}$ is equal to $\{x^+, x^-\}$ for some point $x \in I$. In that case, we will write $a \sim b$.
\end{definition}
\noindent The \textit{critical set} $\mathcal{C}$ of $T$ is defined as follows:
\[
\mathcal{C} := \{\beta_1^-, \beta_1^+,\dots,\beta_{r-1}^-, \beta_{r-1}^+\}.
\]
We will refer to the elements of $\mathcal{C}$ as either the critical points or the discontinuities, depending on the context. We will sometimes use the labels $\beta_0^+ := 0^+$ and $\beta_r^- := 1^-$, but we do not consider them as critical points. By $\mathcal{C}^+$, we denote the set of all $+$-type points in $\mathcal{C}$, and by $\mathcal{C}^-$ the set of all $-$-type points in $\mathcal{C}$. 

Most of the time, we do not need to know the index of a discontinuity with respect to the order in $I$ nor whether the discontinuity is of $+$-type or $-$-type. That is why we will often use labels $\beta$, $\betas$ and $\betass$ to denote the discontinuities we are dealing with. If we care about the sign of a discontinuity, we will use the labels $\beta^+, \betas^-$, etc. In that case, we will use the same label without a sign to denote discontinuity of $T$ corresponding to the signed discontinuity, e.g. we denote by $\beta$ the discontinuity such that $\beta^+$ ($\beta^-$) is the $+$-part ($-$-part) of $\beta$.

The perturbation of (or a map sufficiently close to) some starting map $T$ will be denoted by $\Tilde{T}$. Many of the most important objects associated to a map $T$, e.g. critical points, the set $X$ and boundary points of intervals contained in $X$, will have well-defined continuations for sufficiently small perturbations of $T$. For any such object $Z$, we will denote its continuation by $\Tilde{Z}$, e.g. $\Tilde{\beta}^+$, $\Tilde{X}$, $\Tilde{J}$ and similar.

We associate to each point $x \in I$ its \textit{itinerary}. The itinerary of $x$ is an infinite sequence of integers $(i_0(x), i_1(x), \dots, i_n(x), \dots)$, with $1 \le i_n(x) \le r$, where $i_n(x) = s$ means that $T^n(x) \in I_s$. We will often use `itinerary up to time $n$' to refer to the first $n$ elements of this sequence. 

Finally, for a point $x$ we will refer to the set $O(x) := \{ x, T(x), T^2(x), \dots \}$ as the \textit{$T$-orbit} of $x$. Moreover, if $S$ is a subset of $I$, we will refer to the set $O(S) := \bigcup_{x \in S} O(x)$ as the $T$-orbit of $S$. If the map $T$ is clear from the context, we will omit it and simply use the term `orbit'. We will refer to the set $O(x,n) := \{ x, \dots, T^{n-1}{x} \}$, where $n \ge 0$, as the orbit up to time $n$ of $x$, and to the set $O(S,n) := \bigcup_{x \in S} O(x,n)$ as the orbit up to time $n$ of $S$.

\subsection{Basic results and definitions}
\label{subsec:basic-res-def}
In this subsection, we recall some of the main definitions and list the elementary results for $\ITM$s that we use in the paper. For more details and proofs, see \cite{drach2026transversalityintervaltranslationmaps}.

We start with the following useful definitions:

\begin{definition}
\label{def:c1c2}
For $T \in \ITM(r)$, we define the following sets:
\begin{itemize}
    \item $\mathcal{C}_1$ is the set of all $\beta \in \mathcal{C}$ that eventually land on a discontinuity;
    \item $\mathcal{C}_2$ is the set of all $\beta \in \mathcal{C}$ that never land on a discontinuity, but are eventually periodic;
    \item $\mathcal{C}_0 = \mathcal{C}_1 \cup \mathcal{C}_2$;
    \item $\mathcal{C}_i^{\pm} = \mathcal{C}^{\pm} \cap \mathcal{C}_i$, for $i \in \{ 0, 1, 2 \}$.
\end{itemize}
\end{definition}

Recall the definition of a first return map:
\begin{definition}[First return map]
\label{def:rj}
For an interval $J \subset X$, define $R_J$ to be the \emph{first return map} (or simply the \emph{return map}) to $J$ under $T$, i.e.\ for every $x \in J$ that returns to $J$, we define $R_J(x) := T^k(x)$ for the smallest integer $k = k(x) \ge 1$ such that $T^k(x) \in J$.
\end{definition}
The following is a simple lemma about the first return maps for intervals in $X$:
\begin{lemma}
\label{lem:rj-facts}
The domain of $R_J$ is the entire interval $J$. $R_J: J \to J$ is bijective and $J$ is partitioned into finitely many maximal half-open subintervals such that no point in their interiors lands on a discontinuity of $T$. \qed
\end{lemma}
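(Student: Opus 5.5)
We need to prove Lemma \ref{lem:rj-facts}: for an interval $J \subset X$, the domain of $R_J$ is all of $J$, the map $R_J \colon J \to J$ is bijective, and $J$ is partitioned into finitely many maximal half-open subintervals whose interiors never land on a discontinuity of $T$.

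The plan rests on two facts about $X$: $T$ is injective on $X$ (up to finitely many signed discontinuity points) and $T(X)=X$. Surjectivity is immediate, since $T(X_n)=X_{n+1}$ and intersecting gives $T(X)\supseteq X$; more carefully, every $y\in X$ has a preimage in each $X_n$, and there are at most $r$ candidate preimages, so one candidate lies in infinitely many $X_n$, hence in $X$.

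\emph{Step 1: every point of $J$ returns.} I would use a Poincaré-recurrence-type argument with Lebesgue measure. The key claim is that $T|_X$ preserves Lebesgue measure on $X$. This holds because $T$ is a piecewise translation, surjective onto $X$, and injective off a null set. For injectivity, suppose two distinct branches map positive-measure subsets of $X$ to the same set. Then $T(X)$ would have strictly smaller measure than $X$, since measures add along the branches. This contradicts $T(X)=X$, because $\operatorname{Leb}(T(A))\le \operatorname{Leb}(A)$ for any $A$.

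Recurrence then gives that almost every point of $J$ returns. To upgrade to \emph{every} point I would argue with continuity intervals. Partition $J$ by the finitely many points at which iterates hit discontinuities before the relevant return times. On each small half-open interval $K\subset J$ where $T^k|_K$ is a translation, the image $T^k(K)$ is an interval. Positive measure of $K$ returning means some $T^k(K)$ meets $J$.

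The hard part is to show that every point returns and that the first return time is locally constant on finitely many pieces. I expect this to be the main obstacle. Since $T$ preserves measure on $X$ with total measure at most $1$, the sets $J, T(J), \dots$ cannot stay pairwise disjoint beyond $\lceil 1/|J|\rceil$ steps. So every point has a return time at most $N=\lceil 1/|J|\rceil$, provided one controls the images via half-open conventions.

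Concretely, for each $x\in J$ consider the signed orbit $x^+$. Its right neighbourhood $[x,x+\varepsilon)$ is translated by each $T^k$ with $k\le N$. The images of this neighbourhood under $T^0,\dots,T^N$ cannot be pairwise disjoint within $X$, by the injectivity and measure count above. Injectivity of $T$ on $X$ pulls an overlap back to a return of the neighbourhood to $J$. Half-openness then yields that $x$ itself returns.

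\emph{Step 2: the partition and bijectivity.} With return times bounded by $N$, the set of points of $J$ whose orbit up to time $N$ hits a discontinuity, or an endpoint of $J$, is finite. Its complement in $J$ consists of finitely many half-open intervals on which $R_J$ is a single translation $T^k$. Taking maximal such intervals gives the stated partition.

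Injectivity of $R_J$ follows from injectivity of $T$ on $X$. If $R_J(x)=R_J(y)$ with return times $k\le l$, then $T^k x = T^l y$ forces $x = T^{l-k}y$. Minimality of the return time of $y$ then gives $l=k$ and $x=y$. Finally, $R_J$ is an injective piecewise translation of $J$ into itself on finitely many intervals, so it preserves measure. Its image is therefore a finite union of half-open intervals of full measure in $J$, and is hence all of $J$. This gives bijectivity.
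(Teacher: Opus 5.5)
\textbf{There is a genuine gap.} Your preliminary facts are fine. The pigeonhole argument gives $T(X)=X$. The measure count gives injectivity of $T|_X$ off a Lebesgue-null set, and hence that the non-returning points of $J$ form a null set. Your final bijectivity argument also works once the partition is known to be finite.

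The gap is the passage from ``almost every point returns'' to ``every point returns, with return time at most $N=\lceil 1/|J|\rceil$''. Non-disjointness of $J,T(J),\dots,T^N(J)$ only says that $J\cap T^k(J)$ has positive measure for some $k\le N$, i.e.\ that \emph{some} points of $J$ return by time $N$. The asserted bound is in fact false. For $r=2$ take the rotation $x\mapsto x+\alpha \bmod 1$ with small $\alpha>0$ (so $X=I$) and $J=[0,\tfrac12)$. Then $N=2$, but $x=\tfrac12-\tfrac{\alpha}{2}\in J$ first returns only after about $1/(2\alpha)$ steps. What the measure count actually gives is $r_j\le 1/|J_j|$ for each continuity interval, and the $|J_j|$ are not controlled in advance.

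The localised version is circular. For $K=[x,x+\varepsilon)$ the count needs about $1/\varepsilon$ iterates. But $T^k|_K$ stays a single translation only if $\varepsilon$ is smaller than the distance from $x,\dots,T^kx$ to the discontinuities, and that distance may decay faster than $1/k$. Even a positive-measure overlap only shows that part of $K$ returns. Half-openness cannot rescue this: for a non-returning $x$ and any fixed time $k$, a small enough right neighbourhood of $x$ also misses $J$ at time $k$. Since Step 2 derives finiteness of the partition from this bound, it is unsupported as well.

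What is missing is a combinatorial finiteness statement to replace the bound.
\begin{itemize}
\item Call $y\in J$ a cut point if $y$ is a discontinuity or an endpoint of $J$, or if its signed orbit hits one of these at some time $k\ge 1$ no later than its first return to $J$.
\item Each of these finitely many targets is hit at a positive time by at most one point of $J$. If $y_1\neq y_2$ hit the same target at times $1\le k_1<k_2$, injectivity forces $y_1=T^{k_2-k_1}(y_2)\in J$. That is an earlier return of $y_2$, a contradiction; and $k_1=k_2$ gives $y_1=y_2$. So the cut set $E$ is finite.
\item On each half-open component $K$ of $J\setminus E$, pick a returning point; these have full measure. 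The maximal subinterval of $K$ containing it on which the first return is one translation $T^k$ can only end at a point of $E\cup\partial J$. Hence it is all of $K$.
\end{itemize}
This yields at once that every point returns, that there are finitely many continuity intervals, and that return times are bounded.

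Finally, both here and in your injectivity of $R_J$ you use pointwise injectivity along orbits. Your measure argument only gives injectivity off a null set, not ``up to finitely many points''. To upgrade, compare one-sided neighbourhoods $[x,x+\delta)$ and $[w,w+\delta)$ in $X$ on which the relevant iterates are translations with a common image, and let $\delta\to 0$.
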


The return map $R_J$ is continuous on these subintervals of $J$, so for simplicity we refer to these intervals as `continuity intervals of $R_J$'. We usually use $x$ and $y$ to denote the boundary intervals of $J$, so that $J = [x,y)$.

We will almost exclusively consider return maps to intervals that are connected components of $X$. It will be convenient to use the following shorthand for such intervals:

\begin{definition}
\label{def:comp-int}
We will say that an interval $J$ is a \textit{interval component} of $X$ if $J$ is equal to a connected component of $X$.
\end{definition}

We use the following notation for such intervals $J$. By Lemma \ref{lem:rj-facts}, there are finitely many points $a_{1}, \dots, a_{N-1}$ in the interior of $J$ that land on discontinuities before returning to $J$. It is also convenient to define $a_{0} := x$ and $a_{N} := y$. The continuity intervals of $R_J$ are denoted by $J_1, \dots, J_N$. The return time of $J_j$ to $J$ is denoted by $r_j$, for $1 \le j \le N$, and the first landing times of $a_j$ to discontinuities of $T$ are denoted by $l_j$, for $1 \le j \le N-1$.

An interval component $J$ for which no point in the interior of $J$ lands on a discontinuity will be called \textit{dynamically trivial}. For such intervals, the return map $R_J$ is the identity. Interval components not of this form will be called \textit{dynamically non-trivial}. Any interval that is referred to as dynamically trivial or a non-trivial is assumed to be an interval component of $X$.

The closure $\overline{X}$ of $X$ in $[0,1]$ has the following structure:

\begin{lemma}
\label{lem:x-structure}
For any map $T$, $\overline{X}$ is equal to $A_1 \cup A_2$, where $A_1$ is a finite union of closed intervals and $A_2$ is a Cantor set. The union is disjoint, except possibly at the right endpoints of the intervals in $A_1$. Moreover, one of the sets $A_1, A_2$ is allowed to be empty. \qed
\end{lemma}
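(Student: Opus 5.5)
The plan is to describe $X$ through the combinatorics of the nested sets $X_n = T^n(I)$ and then to separate the "interval part" from the "Cantor part".

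First I will establish the basic finiteness facts. Each $X_n$ is a finite union of half-open intervals, and $X_{n+1} = T(X_n)$. The endpoints of the components of $X_{n+1}$ are images, under translations, of endpoints of components of $X_n$ or of discontinuities. Hence every endpoint of a component of $X_n$ lies in the forward orbit of the finite set $\mathcal{C}\cup\{0^+,1^-\}$. By induction, the number of components of $X_n$ is bounded by $1 + n(r-1)$ at most, and that is all we will need.

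Next I define $A_1$ as the union of the closures of the nondegenerate connected components of $\overline{X}$, and set $A_2 := \overline{X}\setminus \operatorname{int} A_1$. I must show two things: that there are only finitely many nondegenerate components, and that what remains is either empty or a Cantor set.

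\textbf{Finiteness of the nondegenerate components.} Let $J$ be a nondegenerate component of $X$, or more precisely an interval $J\subset \overline{X}$ of positive length. The set $X$ is forward invariant with $T(X)=X$, so the images $T^k(J\cap X)$ stay in $X$. Consider an interval $J$ contained in $X$ that is maximal. A point of $\partial J$ either lies in the orbit of a discontinuity (the $\mathcal{C}$ orbit) or is a limit of gaps of $X_n$. The key mechanism is a measure argument. $T$ restricted to $X$ is a piecewise translation, hence preserves Lebesgue measure on images: $\operatorname{Leb}(T(E))\le \operatorname{Leb}(E)$. Consider the return dynamics of $J$. Since $T$ maps intervals of $X$ piecewise isometrically into $X$, the orbit of $J$ consists of intervals of length at least the minimal continuity piece. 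I would show the following: each component of $X$ of length $\ge\varepsilon$ is hit by images of the finitely many components of length $\ge\varepsilon$, and the total length is at most 1. Thus there are at most $1/\varepsilon$ such components.

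To rule out infinitely many ever-smaller intervals, I use the bounded-complexity fact. The endpoints of components of $X$ are accumulation points of orbits of the $2r$ signed points in $\mathcal{C}\cup\{0^+,1^-\}$. An interval component of $X$ has both endpoints coming from orbits of the discontinuities. Moreover, since $T$ is bijective on the union of interval components (the images must tile it, as measure is preserved on $X$), each interval component is eventually periodic as a set, or its orbit consists of intervals of equal length. Only finitely many disjoint intervals of a given length fit in $I$. I will argue that the union $A_1$ of interval components is $T$-invariant, and that $T$ on it permutes finitely many "chains" generated by the finitely many discontinuities lying in $A_1$. This gives finiteness.

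\textbf{The Cantor part.} The remaining set $A_2$ is compact and, by construction, has no interior. Its components are therefore points, so it is totally disconnected. I must show it has no isolated points. Suppose $p\in A_2$ is isolated in $\overline{X}$. Then $p\in \overline{X_n}$ for all $n$, and $p$ is in $X$ or is a limit point. Because $T(X)=X$, $p$ has a preimage in $X$ near $\overline{X}$. Isolated points of $\overline X$ are finitely many at each scale, and are permuted backward by $T^{-1}$ restricted to $X$. This would force $p$ to be periodic, with a neighbourhood in which it is isolated. But a periodic point of an ITM that is not on a discontinuity orbit has a periodic interval around it. That interval is then contained in every $X_n$, contradicting the isolation. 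If instead $p$ lies on a discontinuity orbit, it arises as an endpoint of components of $X_n$; using one-sided continuity, the adjacent one-sided neighbourhood is also in $X$, again contradicting isolation.

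Finally, the disjointness claim follows from the fact that components of $X$ are half-open $[a,b)$. The closure of such a component can meet $A_2$ only at the right endpoint $b$, where $b\notin X$ may be accumulated by Cantor points.

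The main obstacle is the finiteness of the interval components. My first attempt would combine the measure bound with the observation that every interval component has an endpoint in the forward orbit of one of the finitely many critical signed points. Periodicity of the component orbits then forces only finitely many distinct components per critical point.
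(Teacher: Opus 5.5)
The paper does not prove this lemma. It is quoted with \qed, and Subsection 2.2 refers to \cite{drach2026transversalityintervaltranslationmaps} for proofs. Judged on its own, your proposal has genuine gaps.

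\textbf{Finiteness of the interval part is never proved.} This is the most serious gap.
\begin{itemize}
\item The bound $1+n(r-1)$ on the number of components of $X_n$ grows with $n$. It is therefore not a complexity bound for $X$ and cannot exclude infinitely many ever shorter components.
\item The measure bound only controls components of length $\ge\varepsilon$.
\item The claims that components are eventually periodic as sets, that their orbits are periodic, and that each critical orbit contributes finitely many components are asserted, not derived. A priori a single orbit $\{T^k(\beta^+)\}_{k\ge0}$ could contain infinitely many left endpoints. Infinitely many cycles of shorter and shorter components are not excluded by ``finitely many disjoint intervals of a given length''.
\end{itemize}
Your remark that $T$ is bijective on the union $Z$ of the nondegenerate components is the right starting point. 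What is missing is a recurrence argument built on it. Here is one way to do it.
\begin{itemize}
\item One has $T(Z)\subseteq Z$. Together with $T(X)=X$ and $\operatorname{Leb}(T(X\cap I_s))=\operatorname{Leb}(X\cap I_s)$, this forces $T|_X$ to be injective off a null set. So $T|_Z$ is an invertible Lebesgue-preserving map of $Z$ mod $0$, and Poincar\'e recurrence applies to $T$ and $T^{-1}$.
\item Call a component \emph{cut} if a discontinuity lies in its interior; there are at most $r-1$ of them. A non-cut component $K$ is translated into a single component.
\item If the chain of components of $K$ never meets a cut one, recurrence forces $T^n(K)=K$ with $T^n|_K=\mathrm{id}$. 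The orbit of the left endpoint of $K$ must then pass through $\{0,\beta_1,\dots,\beta_{r-1}\}$. Otherwise a small interval just left of $K$ would also consist of periodic points, hence lie in $X$. This leaves room for at most $r$ such cycles.
\item Every other non-cut component is, by recurrence of $T^{-1}$, visited by an excursion out of the cut components. Each of the finitely many continuity pieces of a cut component travels rigidly along a single chain of non-cut components, and recurrence forces that chain to be finite.
\end{itemize}

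\textbf{The definition of $A_2$ is wrong.} With $A_2:=\overline X\setminus\operatorname{int}A_1$, every endpoint of every interval of $A_1$ lies in $A_2$. For a finite type map this makes $A_2=\partial A_1$ a nonempty finite set, which is neither empty nor a Cantor set. In general the left endpoints lie in $A_1\cap A_2$, contradicting the disjointness you want. You should take $A_2:=\overline{\overline X\setminus A_1}$. Since $A_1$ is a finite union of closed intervals, perfectness of $A_2$ then reduces to $\overline X$ having no isolated points.

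\textbf{Your argument that $\overline X$ has no isolated points fails.}
\begin{itemize}
\item If $T(q)=p$ with $p$ isolated, then $q$ is only isolated from points of $X$ in its own continuity interval. So the isolation radius shrinks near discontinuities, and it can vanish when $q=\beta^+$ is accumulated by $X$ from the left. ``Finitely many isolated points at each scale'' therefore does not force the backward orbit to be periodic.
\item The fallback case proves nothing. Every $p\in X$ has $[p,p+\delta_n)\subseteq X_n$ for each $n$. Being an endpoint of components of $X_n$ gives a one-sided neighbourhood inside $X$ only if $\delta_n$ stays bounded below, which is exactly what is in question.
\end{itemize}

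\textbf{The disjointness claim is unsupported.} Half-openness of the components of $X$ says nothing about whether points of $\overline X\setminus A_1$ accumulate on a left endpoint $a\in X$ from the left. Ruling that out needs a genuinely dynamical argument, which the proposal does not give.
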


In \cite{drach2026characterisationstabilityintervaltranslation} it was also shown that $\overline{X} \setminus \{1\}$ is equal to the non-wandering set of $T$:

\begin{lemma}
\label{lem:nonwandering} 
$\overline{X} \setminus \{ 1 \}$ is equal to the non-wandering set $\Omega(T)$. \qed
\end{lemma}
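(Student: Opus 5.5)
The plan is to prove the two inclusions $\Omega(T) \subset \overline{X}\setminus\{1\}$ and $\overline{X}\setminus\{1\} \subset \Omega(T)$ separately. Throughout, $x$ is non-wandering if every neighbourhood $U$ of $x$ has some $n \ge 1$ with $T^n(U)\cap U \neq \emptyset$, with the signed-point conventions at discontinuities. The point $1$ is excluded because $I=[0,1)$.

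\emph{Step 1: $\Omega(T)\subset \overline{X}$.} I would use a ``depth'' function
\[
d(y) := \sup\{k \ge 0 : y \in X_k\} \in \{0,1,\dots,\infty\},
\]
so that $d(y)=\infty$ exactly when $y \in X$. Since $T(X_k)= X_{k+1}$, we get $d(T y)\ge d(y)+1$ whenever $d(y)<\infty$. Hence depth strictly increases along orbits outside $X$. Now take $x\notin \overline{X}$. The goal is a neighbourhood $U$ of $x$ on which $d$ is bounded, say $d|_U \le M$. Then $T^n(U)\cap U$ would have to contain points of depth at least $n$, so it is empty for $n>M$. The finitely many $n\le M$ can then be excluded by shrinking $U$. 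This uses that $d$ is locally constant off the finitely many endpoints of $X_0,\dots,X_M$, and that a point of finite depth coming back to itself would need $d$ to increase.

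The hard part of Step 1 is showing that $d$ is bounded near $x$. I would argue by contradiction. Suppose every neighbourhood of $x$ meets $X_n$ for all $n$ but misses $X$. Then $x$ is a limit of components of $X_n$ that shrink or drift as $n\to\infty$. The endpoints of components of $X_n$ are iterates of the finitely many signed points $\beta_i^\pm, 0^+, 1^-$. Applying Lemma~\ref{lem:x-structure}, $\overline X$ is a finite union of intervals plus a Cantor set. I would try to show that the intersection $\bigcap_n \overline{X_n}$ agrees with $\overline X$ away from countably many endpoint orbits. Those exceptional points are wandering by a direct translation argument on a one-sided neighbourhood.

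\emph{Step 2: $\overline{X}\setminus\{1\}\subset\Omega(T)$.} Since $\Omega(T)$ is closed, it suffices to show that points of $X$ are non-wandering. For $x$ in an interval component $J$ of $X$ (the $A_1$ part), every neighbourhood $U$ meets $J$ in an interval. By Lemma~\ref{lem:rj-facts}, $R_J$ is a bijective piecewise translation of $J$ with finitely many continuity intervals, so it preserves Lebesgue measure on $J$. Poincaré recurrence for $R_J$ then gives $T^n(U\cap J)\cap U\neq\emptyset$. For $x$ in the Cantor part $A_2$, I would use that $T|_X$ is surjective onto $X$ and that $T$ is a piecewise translation. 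Any open $U$ meeting $X$ contains a point $y\in X$ with a full backward orbit in $X$. Compactness of $\overline X$ gives an $\alpha$-limit point of that backward orbit, and $\alpha$-limit points are non-wandering. It then remains to show that such limit points are dense in $A_2$. I expect this to follow from the description of the Cantor part in \cite{MR1796167}: there $T$ restricted to each Cantor piece is transitive, so $A_2$ is a closure of recurrent points. I regard this transitivity input, together with the boundedness of depth in Step 1, as the main obstacles.
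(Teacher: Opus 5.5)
The paper gives no proof of this lemma (it is quoted from a companion paper), so your outline has to stand on its own, and it has a genuine gap at both places you flag.

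\textbf{Step~1.} Your depth argument is fine once you know $x\notin\bigcap_n\overline{X_n}$, but you never prove this. Since each $X_n$ is a finite union of half-open intervals, any $b\in[0,1)$ lying in $\bigcap_n\overline{X_n}$ but not in $X$ satisfies $[b-\eta_n,b)\subset X_n$ for every $n$. At such a $b$ your depth is unbounded on every left neighbourhood. Saying such points are ``wandering by a direct translation argument'' does not settle this. What does settle it is showing that such $b$ lie in $\overline X$, as follows.
\begin{enumerate}
\item By finiteness of preimages and K\"onig's lemma, choose $b_0=b,b_1,b_2,\dots$ with $T(b_{k+1}^-)=b_k$, such that some left neighbourhood of each $b_k$ lies in every $X_n$.
\item Fix $\epsilon>0$. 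Let $k\ge1$ be the first index at which $b_k$ is within $\epsilon$ of the nearest point $\beta\in\{0,\beta_1,\dots,\beta_{r-1}\}$ to its left. If there is no such $k$, then $[b-\epsilon,b)\subset X$ and you are done.
\item Otherwise $T^k$ translates $[\beta,b_k)$ onto $[b-\ell,b)$ with $\ell<\epsilon$. If $b\notin\overline X$, this forces $\beta\notin X$ for all small $\epsilon$.
\item Pigeonholing as $\epsilon\to0$ gives a single $\beta_*\notin X$ approached from the right by infinitely many $b_k$.
\item But then every $X_n$ accumulates on $\beta_*$ from the right, so it contains $\beta_*$. Hence $\beta_*\in X$, a contradiction.
\end{enumerate}

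\textbf{Step~2.} Your argument on the interval part $A_1$ is fine. On $A_2$, however, your route does not work as stated.
\begin{itemize}
\item What the paper records from \cite{MR1796167} is that infinite type \emph{together with} transitivity on $X$ forces $\overline X$ to be a Cantor set. It does not say that $T$ is transitive on pieces of $A_2$.
\item Without such an input, your $\alpha$-limit argument only shows that \emph{some} points of $\overline X$ are non-wandering, not that such points are dense.
\item Surjectivity of $T$ on $X$ plus compactness cannot suffice by themselves: a circle homeomorphism with a single fixed point is surjective, yet every other point wanders.
\end{itemize}

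The missing idea is isometric transport along a backward orbit, which handles all of $X$ at once.
\begin{enumerate}
\item Suppose $U=(x-\delta,x+\delta)$ with $x\in X$ were wandering. Then the sets $T^{-k}(U)$, $k\ge0$, are pairwise disjoint.
\item Take a backward orbit $x_k\in X$ of $x$; it exists because $T(X)=X$, by finiteness of preimages.
\item Suppose all $x_k$ with $k\ge1$ stayed a distance $\epsilon_0$ away, on their right, from $\{\beta_1,\dots,\beta_{r-1},1\}$. Then the intervals $[x_k,x_k+\min(\epsilon_0,\delta))\subset T^{-k}(U)$ would be infinitely many disjoint intervals of equal length, which is impossible.
\item So for each $\epsilon\le\delta$ there is a first index $k$ at which $x_k$ is within $\epsilon$ of the nearest such point $\beta$ on its right, and then $[x_k,\beta)\subset T^{-k}(U)$.
\item Choose the $\epsilon$'s decreasing so that these first indices strictly increase, and pigeonhole on $\beta$. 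You get $k\neq k'$ with $[x_k,\beta)$ and $[x_{k'},\beta)$ sharing a right endpoint. They therefore intersect, contradicting disjointness.
\end{enumerate}
Hence $X\subset\Omega(T)$, and closedness of $\Omega(T)$ gives $\overline X\setminus\{1\}\subset\Omega(T)$.
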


Lemma \ref{lem:J-dynamics} gives some elementary dynamical properties for the interval components of $X$, and it will be used implicitly.

\begin{lemma}
\label{lem:J-dynamics}  
Let $T \in \ITM(r)$. Then the following two properties hold: 
\begin{enumerate}[label=(\alph*)]
    \item Every interval component of $X$ is of the form $[T^{k_1}(\beta^+), T^{k_2}(\betas^-))$, respectively, for some $\beta^+, \betas^- \in X \cap \mathcal{C}$ and $k_1,k_2 \ge 0$;
    \item If $T^{l_1}(\beta)$ is an interior point of $X$ for some $\beta \in X \cap \mathcal{C}$ and $l_1 \ge 0$, then there exist $\betas \in X \cap \mathcal{C}$ and $l_2 \ge 0$ such that $T^{l_1}(\beta) \sim T^{l_2}(\betas)$. \qed
\end{enumerate}
\end{lemma}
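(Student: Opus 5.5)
Part (a) will follow from the finite structure of $X$ near its boundary, and (b) from local continuity of $T^{l_1}$.

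For (a), let $J=[u,v)$ be an interval component of $X$. By Lemma~\ref{lem:x-structure}, $J$ is one of the intervals of $A_1$, with $\overline{X}$ having a gap immediately to the left of $u$; when $u=0$ we use $\beta_0^+=0^+$. I would argue that $u$ is the image of a left endpoint.

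Since $X=\bigcap_n X_n$ and each $X_n=T^n(I)$ is a finite union of half-open intervals, $u$ is a left endpoint of a component of $X_n$ for all large $n$. The left endpoints of components of $T(Y)$, for a finite union $Y$ of intervals, are either images of left endpoints of components of $Y$ or images $T(\beta^+)$ of right-sided critical points. Tracing back, $u=T^{k_1}(\beta^+)$ for some $\beta^+\in\mathcal{C}^+\cup\{0^+\}$ and $k_1\le n$.

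I then need $\beta^+\in X$, and I would get it from invariance. Since $T(X)=X$ and $J$ returns to itself under the bijective map $R_J$ (Lemma~\ref{lem:rj-facts}), I can choose the backward chain inside $X$. Pick $w\in X$ with $T^{k}(w)=u^+$ minimal among preimages in $X$. If no point of the backward chain in $X$ were a signed critical point, $T^{-1}$ restricted to $X$ would be locally an isometric branch near $u$. Then the point $u$ would have preimages arbitrarily far back in $X$ with a right neighbourhood in $X$ and a left gap, and these backward images would form infinitely many distinct left boundary points of components of $X$. That contradicts the finiteness of $A_1$. Points in distinct iterates cannot coincide forever without periodicity, and a periodic interval with a gap on its left is dynamically trivial, so its endpoint is reached by the orbit of the discontinuity creating the gap.

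The right endpoint $v=T^{k_2}(\betas^-)$ is handled by the symmetric argument with $-$-type points, where $1^-=\beta_r^-$ plays the role of $0^+$.

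For (b), suppose $p=T^{l_1}(\beta)$ lies in the interior of $X$. I would take the maximal interval $[p,q)$ or $(q,p]$ on the opposite side of $p$ from the sign of $\beta$, and follow it backward inside $X$. The side of $p$ facing away from the one-sided orbit of $\beta$ must be covered by $X$. Hence some branch maps a neighbourhood on that side onto it, and the boundary of that branch is again the image of a critical point with the opposite sign, by the same finiteness argument as in (a). This gives $T^{l_2}(\betas)$ with $T^{l_1}(\beta)\sim T^{l_2}(\betas)$.

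The main obstacle is the backward-tracing step: showing that the chain terminates at a critical point lying in $X$ rather than wandering. The paper cites this from \cite{drach2026transversalityintervaltranslationmaps}, so I would rely on that finiteness argument.
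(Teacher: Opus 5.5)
The paper does not prove this lemma; it is quoted from \cite{drach2026transversalityintervaltranslationmaps}. Your sketch therefore has to stand on its own, and part (b) has a genuine gap.

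\textbf{Part (b).} You assert that the branch carrying the left side of $p=T^{l_1}(\beta^+)$ has its boundary exactly at $p$, and that this boundary comes from some $\betas^-$, ``by the same finiteness argument as in (a)''. That argument does not transfer. In (a) the contradiction comes from the gap to the left of $u$: a backward chain avoiding $\mathcal{C}$ pulls this gap back to infinitely many left endpoints of interval components. In (b) the point $p$ is interior, so a backward chain of $p^-$ avoiding $\mathcal{C}^-$ consists of interior points and contradicts no finiteness statement. Nothing in your argument prevents the branch over $(p-\eta,p)$ from extending continuously past $p$.

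The missing ingredient is injectivity. Let $X_{\mathrm{int}}$ be the union of the interval components of $X$.
\begin{enumerate}
\item By Lemma~\ref{lem:rj-facts} and $T(X)\subset X$ you get $T(X_{\mathrm{int}})=X_{\mathrm{int}}$. Since $T$ is a piecewise translation, this makes $T$ injective on $X_{\mathrm{int}}$ up to measure zero.
\item It is then injective everywhere: two preimages $a\neq b$ of one point would give right neighbourhoods $[a,a+\eta),[b,b+\eta)\subset X_{\mathrm{int}}$ with the same image, a positive-measure overlap.
\item Now trace the backward chains of $p^+$ and $p^-$ inside $X_{\mathrm{int}}$ simultaneously. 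By injectivity the first chain is the orbit of $\beta^+$. If the two chains agree up to step $l_1$, then $p^-=T^{l_1}(\beta^-)$ and you are done.
\item Otherwise, take the first step where they differ, and let $w$ be the preimage on the $-$-chain. Either $w$ is a discontinuity, so $w^-\in\mathcal{C}^-\cap X$. Or $T$ is continuous at $w$; then injectivity forces $w\notin X_{\mathrm{int}}$, so $w$ is the right endpoint of an interval component. In that case (a) gives $w^-=T^{k_2}(\betas^-)$, hence $p^-=T^{l_2}(\betas^-)$.
\end{enumerate}
For infinite type maps you must also check that $\beta$ lies in $X_{\mathrm{int}}$. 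This is automatic in the finite type setting where the lemma is used.

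\textbf{Part (a).} Your invariance argument works once made precise, but your opening step is unjustified and unnecessary. The claim that $u$ is a left endpoint of a component of $X_n$ for all large $n$ does not follow from $X=\bigcap_n X_n$ with each $X_n$ a finite union of half-open intervals: take $X_n=[u-1/n,v)$.

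A direct route through Lemma~\ref{lem:rj-facts} avoids both this step and the finiteness of $A_1$. Write $u=R_J(a_{j-1}^+)$, where $J_j$ is the continuity interval whose image starts at $u$.
\begin{itemize}
\item If $a_{j-1}$ is interior to $J$, it lands on $\beta(j-1)^+\in X$ before returning. Hence $u=T^{r_j-l_{j-1}}(\beta(j-1)^+)$.
\item If $a_{j-1}=u$, then $T^{r_j}(u)=u$. Suppose no $\beta^+\in\mathcal{C}$ occurs on the orbit of $u^+$ before time $r_j$. Then $T^{r_j}$ is continuous at $u$ and equals the identity on a two-sided neighbourhood of $u$. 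That neighbourhood consists of periodic points, so $(u-\eta,u)\subset X$, contradicting that $J$ is a component.
\end{itemize}
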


Finally, Lemma \ref{lem:orb-class} gives a simple classification of the possible $T$-orbit behaviour:  

\begin{lemma}[Orbit Classification Lemma]
\label{lem:orb-class}
For each point $x \in I$, at least one of the following three possibilities holds:
\begin{enumerate}
\item (Precritical) $x$ lands on a discontinuity of $T$;
\item (Preperiodic) $x$ lands on a periodic point of $T$;
\item (Accumulation) $x$ accumulates on a discontinuity of $T$. More precisely, there exist discontinuities $\beta, \betas \in \mathcal{C}$ such that $x$ accumulates on $\beta$ from the left and on $\betas$ from the right. \qed
\end{enumerate}
\end{lemma}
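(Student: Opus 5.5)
We prove the Orbit Classification Lemma by assuming that $x$ is neither precritical nor preperiodic and showing that the accumulation alternative must then hold. The plan has three steps: show the orbit is infinite, extract a monotone sequence of orbit points converging to a limit $z$, and show $z$ is a discontinuity.

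\emph{Step 1: the orbit is infinite.} Since $x$ never lands on a discontinuity, every point $T^n(x)$ lies in the interior of some partition interval $[\beta_{i-1},\beta_i)$. If two orbit points coincided, $T^n(x)=T^m(x)$ with $n<m$, then $T^n(x)$ would be periodic. This contradicts the assumption that $x$ is not preperiodic, so the points $T^n(x)$ are pairwise distinct.

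\emph{Step 2: a monotone accumulating sequence.} The orbit is an infinite subset of $[0,1]$, so it has an accumulation point $z$, approached by a strictly monotone sequence of orbit points $T^{n_k}(x)$. Assume first that the approach is from the left.

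\emph{Step 3: the limit is a discontinuity.} Suppose, for contradiction, that $z^-$ is not a discontinuity, so $T$ is a translation on a left neighbourhood $(z-\delta,z)$. Our first attempt is to use the maximal interval $L\ni T^{n_k}(x)$ on which $T^{m}$ is continuous for all $m\le M$. If the lengths of these intervals were bounded below along the orbit, two orbit points $T^{n}(x)$ and $T^{m}(x)$ would lie within distance less than that bound. An isometry argument then forces a periodic interval: the interval $[T^n(x),T^m(x)]$ is translated rigidly, so the displacement $T^m(x)-T^n(x)$ is repeated. Iterating this, the orbit of $T^n(x)$ would march monotonically forever with a constant step, which is impossible in a bounded interval unless a discontinuity is eventually hit. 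Hitting a discontinuity contradicts the non-precritical assumption, so the lengths cannot stay bounded below.

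\emph{The main obstacle} is making this step clean, and the approach we would actually take is the following more robust version. The left limit $z^-$ has a well-defined forward orbit under the convention of signed points. If no forward image $T^j(z^-)$ is a signed discontinuity, then each $T^j$ is a translation on some interval $(z-\delta_j,z)$. The $T^{n_k}(x)$ converge to $z^-$, so their images shadow the orbit of $z^-$ for longer and longer times. Taking two orbit points $p=T^{n}(x)<q=T^{m}(x)$ very close to $z$, the map $T^{m-n}$ translates a left neighbourhood of $q$ rigidly. Since $T^{m-n}(p)=q$, the same translation carries $q$ to $T^{2(m-n)}(p)$, and so on. This produces infinitely many distinct points $q+k(q-p)$ inside $[0,1]$, a contradiction.

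\emph{Conclusion.} Hence some image $T^j(z^-)$ is a signed discontinuity $\beta$. The points $T^{n_k+j}(x)$ then converge to $\beta$ from the left, since translations preserve the side of approach. Shadowing has to be handled with care to ensure the rigid translation domain contains both $p$ and $q$; we would choose $p,q$ within $\delta$ of $z$ using the shadowing time. The right-sided case is symmetric.

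Finally, we need accumulation on the left of some $\beta$ and on the right of some $\betas$ simultaneously. The orbit visits both sides of its accumulation set: the smallest accumulation point in $\overline{O(x)}$ is approached from the right, and the largest from the left. Applying the above to each yields both $\beta$ and $\betas$.
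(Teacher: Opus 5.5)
\textbf{There is a genuine gap in Step 3.} Steps 1 and 2 are fine, but the ``robust'' version of Step 3 derives a contradiction from the hypothesis that no $T^j(z^-)$ is a signed discontinuity, and that hypothesis is not contradictory. Take an irrational rotation written as an $\ITM$ on two intervals. Every non-precritical orbit is dense, so any $z$ whose left orbit never equals $\beta_1^-$ is approached from the left by the orbit of $x$. Such $z$ make up all but countably many points, and $x$ is neither precritical nor preperiodic.

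The argument breaks at ``and so on''. $T^{m-n}$ is known to be the translation by $q-p$ only on the window $(z-\delta_{m-n},z)$. The points $q+k(q-p)$ move to the right and leave this window after boundedly many steps; already $2q-p$ may exceed $z$. Moreover, $\delta_{m-n}$ can tend to $0$ as $m-n$ grows, because the left orbit of $z$ may come arbitrarily close to discontinuities without hitting them. So the ``care with shadowing'' you defer cannot, in general, even place $p$ inside the window. The overlap argument needs one fixed interval of positive length on which \emph{all} iterates are translations. Avoiding exact landings does not provide this, and the lemma only claims accumulation, not landing.

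Separately, your last paragraph rests on a point-set claim that is false in general. A sequence increasing to its unique accumulation point shows that the smallest accumulation point need not be approached from the right. So the two-sided conclusion is not established either.

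\textbf{A repair that avoids choosing $z$.} Let $U_n=[a_n,b_n)\ni x$ be the maximal interval on which $T^n$ is a translation, so $a_n\uparrow a\le x<b_n\downarrow b$.

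Suppose $b>x$. Then every iterate is a translation on $V=[x,b)$. Two of the images, all of length $|V|$, satisfy $T^m(V)=T^n(V)+t$ with $n<m$ and $|t|<|V|$. By induction on $k$, $T^{m-n}$ is a translation on $T^n(V)+kt$, and on the nondegenerate overlap with $T^n(V)+(k-1)t$ it equals translation by $t$. Hence $T^{k(m-n)}(T^n(V))=T^n(V)+kt$ for all $k$, which forces $t=0$, so $T^n(x)$ is periodic. This is what your ``first attempt'' was reaching for.

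Hence $b_n\downarrow x$ strictly. For each $b_n<1$, choose the minimal $j_n<n$ where the right endpoint of the continuity interval is attained; this disposes of the endpoint $1^-$ and gives $T^{j_n}(b_n^-)\in\mathcal{C}^-$. Then $T^{j_n}(x)$ lies at distance $b_n-x\to0$ to the left of a discontinuity. Pigeonhole over the finitely many discontinuities gives $\beta$.

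The same argument with $[a,x]$ and $a_n\uparrow x$ gives $\betas$ from the right. If $x=0$, replace $x$ by $T(x)$, since then $a_n=0$ for all $n$. Both sides come out at once.
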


\subsection{Stability, ACC and Matching}
\label{subsec:stability}
In this subsection, we briefly review the stability theory of $\ITM$s. For more details and proofs, see \cite{drach2026characterisationstabilityintervaltranslation}.

\begin{definition}[Stable maps]
\label{def:stable} 
We say that a map $T \in \ITM(r)$ is \textit{stable} if there a neighbourhood $\mathcal{U}$ of $T$ in $\ITM(r)$ such that:

\begin{enumerate}
\item The mapping assigning to each map $\Tilde{T}$ in $\mathcal{U}$ the corresponding set $\overline{X}(\Tilde{T})$ is continuous with respect to the Hausdorff topology on compact subsets of $[0,1]$. Moreover, for each $\tilde{T} \in \mathcal{U}$, $\overline{X}(\Tilde{T})$ is homeomorphic to $\overline{X}(T)$.
\item The number of discontinuities in $I \setminus \overline{X}(\Tilde{T})$ is constant in $\mathcal{U}$.
\end{enumerate}
\end{definition}

The following is a simple lemma:

\begin{lemma}
\label{lem:stable-fin-type}
Stable maps are of finite type. \qed
\end{lemma}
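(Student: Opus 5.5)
We argue by contradiction, using the structure of $\overline{X}$ from Lemma~\ref{lem:x-structure}. Let $T$ be stable with neighbourhood $\mathcal{U}$ as in Definition~\ref{def:stable}, and suppose $T$ is of infinite type. By the result of \cite{MR1796167} recalled in the introduction, $X(T)$ is then not a finite union of intervals, so in the decomposition $\overline{X}(T)=A_1\cup A_2$ the Cantor part $A_2$ is nonempty. Condition (1) of Definition~\ref{def:stable} says that $\overline{X}(\Tilde T)$ is homeomorphic to $\overline{X}(T)$ for every $\Tilde T\in\mathcal{U}$. In particular, every map in $\mathcal{U}$ has a nonempty Cantor part, and the number of connected components of $A_1$ (intervals of positive length) is the same throughout $\mathcal{U}$.

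It therefore suffices to find $\Tilde T\in\mathcal{U}$ with $\overline{X}(\Tilde T)$ a finite union of intervals. This contradicts condition (1), since a nonempty Cantor set is not homeomorphic to a finite union of intervals: the former has uncountably many components, the latter finitely many.

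To produce such a $\Tilde T$, I would use the density of eventually periodic maps (Lemma~\ref{lem:ep-dense}) together with their finite type (Lemma~\ref{lem:ep-fin-type}). These lemmas appear later in the paper, but their proofs are independent of stability. For such $\Tilde T$, $X(\Tilde T)=X_n(\Tilde T)$ for some $n$, which is a finite union of intervals. Hence $\overline{X}(\Tilde T)$ is a finite union of closed intervals and has no Cantor part, giving the contradiction.

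If I want to avoid forward references, I would instead perturb $T$ directly within $\mathcal{U}$ to rational parameters, that is, $\beta_i,\gamma_i\in\frac1q\mathbb{Z}$. Then $T$ maps the finite grid $\frac1q\mathbb{Z}\cap[0,1)$ into itself, and each $X_n$ is a union of grid intervals $[k/q,(k+1)/q)$. The nested sequence $X_n$ therefore stabilises after finitely many steps, so the map is of finite type. This rational-approximation argument is elementary, and I expect it to be the cleanest route.

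The main obstacle is the topological point that $A_2\neq\emptyset$ for an infinite type map, i.e.\ that $\overline{X}$ is not itself a finite union of intervals. I would handle it by citing \cite{MR1796167}: finite type is equivalent to $X$ being a finite union of intervals. One also has to pass to closures, checking that if $\overline{X}$ were a finite union of closed intervals then $X$ would differ from it only at finitely many points and hence would be a finite union of half-open intervals. This holds because the boundary points of $X$ are images of critical points by Lemma~\ref{lem:J-dynamics}, so I would expect this step to need only a short remark.
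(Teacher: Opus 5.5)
The paper gives no proof of this lemma; it quotes it as a ``simple lemma'' from the companion paper \cite{drach2026characterisationstabilityintervaltranslation}. Your argument is correct in outline and is the natural one. A stable $T$ has a neighbourhood $\mathcal U$ on which the homeomorphism type of $\overline X$ is constant, and $\mathcal U$ contains a finite type map. Your grid argument with $\beta_i,\gamma_i\in\frac1q\mathbb Z$ is the better way to get that finite type map. It is fully self-contained, whereas Lemma~\ref{lem:ep-fin-type} has a more delicate proof.

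The one step not justified as written is the one you flagged: passing between $X$ and $\overline X$. Lemma~\ref{lem:J-dynamics}(a) only describes \emph{interval components} of $X$. It does not exclude, for example, $X$ being an interval minus a countable dense set, which has no interval components at all. So it cannot show that $X$ differs from $\overline X$ in finitely many points.

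The correct reason is the structure of the complement. We have $I\setminus X=\bigcup_n (I\setminus X_n)$, and each $I\setminus X_n$ is a finite union of nondegenerate intervals $[a,b)$. Suppose $[c,d]\subseteq\overline X$ and $p\in[c,d)\setminus X$. Then some $[p,p+\delta)$ misses $X$. The nonempty open set $(p,\min(p+\delta,d))$ then misses $\overline X$, which is a contradiction. Hence $[c,d)\subseteq X\subseteq[c,d]$ for each component $[c,d]$, so $X$ is a finite union of intervals and \cite{MR1796167} gives finite type.

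A smaller imprecision: $\overline X(T)=A_1\cup A_2$ is not itself a Cantor set, so the component count must be phrased for this union. The points of $A_2\setminus A_1$ are singleton components. This set is nonempty because $A_2$ is uncountable and meets $A_1$ in only finitely many points.

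You can drop $A_2$ and Lemma~\ref{lem:x-structure} altogether. The homeomorphism $\overline X(T)\cong\overline X(\tilde T)$ shows directly that $\overline X(T)$ has finitely many components, all nondegenerate. So $\overline X(T)$ is a finite union of closed intervals, and the complement argument together with \cite{MR1796167} finishes the proof without contradiction.
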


The properties that characterize stability are those that prevent a discontinuous change in the topology of $X$. To describe these properties, we need to recall a few definitions.

\begin{definition}[Ghost preimage]
Let $\betas^+$ a discontinuity of $T$. A discontinuity $\betass^-$ that lands on $\betas^-$ is called a \textit{ghost preimage} of $\betas^+$.
\end{definition}

Thus a ghost preimage of a $+$-type discontinuity $\betas^+$ is a $-$-type discontinuity $\betass^-$ that lands on $\betas^-$. The definition can easily be extended to a $-$-type discontinuity. The motivation for the name is the following: $\betass^-$ is \textit{almost} a preimage of $\betas^+$, i.e.\ by an arbitrarily small perturbation we can make the iterate $T^k(\betass^-)$ land to the right of $\betas^+$, thus creating an actual preimage of $\betas^+$. Ghost preimages of discontinuities form trees:

\begin{definition}[Ghost tree]
\label{def:ghost-tree}
Let $\beta$ be a discontinuity of $T$. The \textit{ghost tree} $\mathcal{GT}(\beta)$ of $\beta$ is defined inductively in the following way. Set $\beta$ to be the root of the tree, i.e.\ the set of level $0$ vertices of the tree. Assume that we have defined all of the vertices of level $\le n$ and all of the edges between them. Then the level $n+1$ vertices of the tree correspond to the set of all ghost preimages (if such exist) of the discontinuities corresponding to level $n$ vertices of the tree. The new edges are those between discontinuities and their ghost preimages, i.e.\ a new direct edge $\betas \leftarrow \betass$ is added if and only if $\betas$ is a level $n$ vertex, $\betass$ is a level $n+1$ vertex and $\betass$ is a ghost preimage of $\betas$. 
\end{definition}

The following is the first of the two properties characterizing stability:

\begin{definition}[Absence of Critical Connections (ACC)]
\label{def:acc}
We say that a finite type $T$ satisfies the \textit{ACC} condition if the following three conditions hold: 
\begin{enumerate}
    \item (A1) For every interval component $J$ of $X$ and each point $a \in J$, the orbit of $a$ up to and not including the return time to $J$ contains at most one critical point of $T$;
    \item (A2) For every dynamically non-trivial interval $J$, none of the boundary points of $J$ land on discontinuities up to and not including the return time to $J$;
    \item (A3) For every discontinuity $\beta \notin X$, its ghost tree $\mathcal{GT}(\beta)$ does not contain $\beta$.
\end{enumerate}
\end{definition}

Roughly speaking, A1 and A2 prevent discontinuous changes that make $\overline{X}$ smaller, while A3 prevents discontinuous changes that make it bigger. For more details and a more formal discussion, see \cite{drach2026characterisationstabilityintervaltranslation}. The other property that characterizes stability is:

\begin{definition}[Matching]
\label{def:matching}
We say that a finite type $T$ satisfies the \textit{Matching} condition if for every dynamically non-trivial interval $J$, exactly one point $a$ in the interior of $J$ lands on a critical point before returning to $J$.
\end{definition}

The reason we call this property Matching is that if $a$ is the single point from above, then we have that:
\begin{enumerate}
    \item $J = [R_J(a^+), R_J(a^-))$;
    \item $R^2_j(a^-) \sim R^2_J(a^+)$,
\end{enumerate}
\noindent and thus the second iterates under $R_J$ of $a^+$ and $a^-$ `match'. With the definitions out of the way, we are ready to state the main theorem of \cite{drach2026characterisationstabilityintervaltranslation}:

\begin{theorem}[Characterisation of Stability]
\label{thm:stability=accm}
A finite type interval translation map $T$ is stable if and only if it is of finite type and satisfies the ACC and Matching properties. \qed
\end{theorem}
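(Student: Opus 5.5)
The plan is to prove the two implications separately, in both cases working with the first return maps $R_J$ to the interval components $J$ of $X$ (Lemma~\ref{lem:rj-facts}). For finite type $T$, $X$ is a finite union of such components. By Lemma~\ref{lem:J-dynamics}(a), each endpoint of a component is an iterate of a signed discontinuity lying in $X$. So the topology of $\overline{X}$ is encoded by finitely many combinatorial data: for each $J$, the points $a_1,\dots,a_{N-1}$, the landing times $l_j$, the return times $r_j$, and the iterates of critical points that form $\partial J$. The whole argument should reduce to deciding when these data admit continuations $\Tilde{a}_j$, $\Tilde{J}$ for all nearby $\Tilde{T}$.

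\emph{Sufficiency (ACC and Matching $\Rightarrow$ stable).}

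1. Fix a dynamically non-trivial $J=[x,y)$. By Matching there is a single interior point $a$ that lands on a critical point $\beta$ before returning. By Matching and the remark after Definition~\ref{def:matching}, $J=[R_J(a^+),R_J(a^-))$, so both endpoints are continued as iterates of $\Tilde{\beta}^{\pm}$ along fixed itineraries.

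2. (A1) guarantees that no other critical point is met along these orbit segments. (A2) guarantees that the boundary points do not hit discontinuities before returning. Hence for $\Tilde{T}$ near $T$ the itineraries up to the return times persist. Consequently $\Tilde{J}:=[\Tilde{R}(\Tilde{a}^+),\Tilde{R}(\Tilde{a}^-))$ is mapped into itself by a two-interval return map, i.e. a rotation, and $\Tilde{J}$ lies in $\Tilde{X}$.

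3. Dynamically trivial components persist more simply: their orbits avoid discontinuities, so nearby maps permute a slightly moved copy. In this way $\Tilde{X}$ contains a set Hausdorff-close to $X$ and homeomorphic to it.

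4. For the reverse inclusion, use (A3) to show that no new pieces of $\Tilde{X}$ appear. A discontinuity $\beta\notin X$ lies in the basin: by Lemma~\ref{lem:orb-class} it eventually enters $X$ or lands on a discontinuity. The only way a nearby map could create a new invariant piece near the orbit of $\beta$ is through a chain of ghost preimages returning to $\beta$, i.e. a cycle in $\mathcal{GT}(\beta)$, which (A3) forbids. Thus every point outside a fixed neighbourhood of $X$ still leaves it in bounded time for all $\Tilde{T}$ near $T$. This gives Hausdorff continuity, the homeomorphism type, and constancy of the number of discontinuities outside $\overline{X}$.

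\emph{Necessity.} I would argue by contraposition, constructing for each failure an arbitrarily small perturbation that changes $\overline{X}$ discontinuously. There are four cases.

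1. \emph{Matching fails.} Some non-trivial $J$ has at least two interior points landing on critical points. Perturb the translation factors so that $R_{\Tilde{J}}$ becomes an IET on at least three intervals that is not a rotation, or an ITM on $J$ that is not bijective. Then $\Tilde{X}\cap\Tilde{J}$ loses an interval, or the component splits.

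2. \emph{(A1) or (A2) fails.} A boundary orbit, or a single orbit, meets two critical points before returning. Unfold this critical connection so that the boundary of $J$ pops across a discontinuity; this shrinks $\Tilde{X}$ by a definite amount.

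3. \emph{(A3) fails.} Push each ghost preimage in the cycle to the correct side, turning it into a genuine preimage. The orbit of $\beta$ then closes up, which creates a new periodic interval near $\beta$ and enlarges $\Tilde{X}$.

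4. In every case, check that the perturbation stays inside the polytope $\ITM(r)$ and that the change is not undone by an additional small adjustment.

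\emph{Main obstacle.} I expect step 4 of the sufficiency direction to be hardest: controlling the whole of $\Tilde{X}$ uniformly, and ruling out that some infinite-type or Cantor behaviour appears for nearby maps. My approach there is to use the ghost-tree acyclicity to bound uniformly the time each discontinuity outside $X$ needs to enter a fixed neighbourhood of $X$. I would then show that the continued components of $X$ absorb this neighbourhood under $\Tilde{T}$.
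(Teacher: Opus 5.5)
This paper does not prove the statement. It is imported, with \qed, from \cite{drach2026characterisationstabilityintervaltranslation}, so there is no in-paper proof to compare against and your outline has to stand on its own. The architecture is sensible: continue the components of $X$ for a lower bound, use ghost trees for an upper bound, and prove necessity by destabilising perturbations. Steps 1--2, which continue non-trivial components through the single matching point using A1 and A2, look fine. There are, however, genuine gaps.

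\textbf{Sufficiency, Step 3 is false.} A dynamically trivial component $J$ has $R_J=\mathrm{id}$, so it is a periodic interval of some period $p$ with $Tr(J,p)=0$. By Lemma~\ref{lem:J-dynamics}(a) its endpoints are iterates of discontinuities $\beta^+,\beta_*^-\in X$, and by periodicity these discontinuities lie on its boundary orbits. So the orbits of $J$ do not avoid $\mathcal C$. Moreover, any perturbation with $\tilde{T}r(\tilde J,p)\neq 0$ makes $\tilde T^p$ a non-zero translation on the continuation, so there is no periodic copy to continue.

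Concretely, take $r=3$, $\beta_1=0.2$, $\beta_2=0.8$, $\gamma_1=0.3$, $\gamma_2=0$, $\gamma_3=-0.4$. Then $X=[0.2,0.8)$ is a single trivial component. A1 holds and A2 and Matching are vacuous. Yet replacing $\gamma_2$ by $\delta>0$ gives $\tilde X=[0.4,0.8+\delta)$, so $\overline X$ jumps. The only hypothesis that can exclude this map is A3: the ghost tree of $\beta_2^+\notin X$ contains $\beta_2^-$. Hence A3 must also be used for the \emph{lower} bound. For example, it should show that the discontinuity on the boundary orbit of a trivial component is interior to $X$. Then the component lies in the orbit of a non-trivial component $J'$ and is continued as an iterate of a piece of $\tilde J'$.

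\textbf{Sufficiency, Step 4 is only asserted.} You rightly flag it as the main obstacle, but three claims carry the whole sufficiency direction and none is argued. They are: new pieces of $\tilde X$ can only come from a cycle in a ghost tree; escape times are uniform; and the continued components absorb a neighbourhood of $X$, which near $\partial\tilde J$ again needs A2 and A3.

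\textbf{Necessity: the perturbations do not force a discontinuous change.} If Matching fails and you make $R_{\tilde J}$ a bijective IET on $\ge 3$ intervals, then $\tilde J\subset\tilde X$ and nothing changes. An $O(\epsilon)$ hole is also harmless by itself: interior maps of $\ITM(2)$ are stable and still lose $O(\epsilon)$-intervals under perturbation. You must make a discrete invariant jump. Typically this means a discontinuity in the interior of $X$ falls into a gap of $\overline{\tilde X}$, changing the count in Definition~\ref{def:stable}(2). That requires steering the hole's iterates onto a discontinuity before they reach the doubly covered intervals. This is the kind of control Section~\ref{sec:stab-approx-ep} needs, via the choice of $v_2$, $a_p$ and $-\epsilon_2<-a\epsilon_1+b\epsilon_2<\epsilon_1$.

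For A3, `closing the ghost cycle' can also be harmless. Take $\gamma_1=0.6$, $\gamma_2=0$, $\gamma_3=-0.6$ with the same $\beta$'s; here $\beta_1^-$ reappears at level $2$ of its ghost tree. Closing the cycle with $\gamma_1=0.6+\eta$, $\gamma_3=-0.6-\eta$ gives $\tilde X=[0.2-\eta,0.8+\eta)$, with the same topology and discontinuity count. The instability comes from pushing the other way: $\gamma_2=\delta$, $\gamma_3=-0.6+\eta$ gives $\tilde X=[0.2+\eta,0.8+\delta)$, so $\beta_1$ leaves $\overline{\tilde X}$.

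Finally, A2 already fails when a boundary point of a non-trivial $J$ meets a single discontinuity; you describe it as meeting two. The effect to aim for there is again such a discrete jump, not shrinking `by a definite amount'.
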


\subsection{Perturbation Lemma}
\label{subsec:perturbation-lemma}

In this subsection, we recall an important result from \cite{drach2026transversalityintervaltranslationmaps}, stated below as Perturbation Lemma \ref{lem:pert-lem}. This is the main ingredient in the proof of Theorem \ref{thm:ep->acc+m}. It holds for two different types of intervals: interval components of $X$ (Definition \ref{def:comp-int}) and maximal periodic intervals (Definition \ref{def:max-per-int}).

\begin{definition}
\label{def:max-per-int}
A maximal interval $J \subset I$ consisting of periodic points with the same itinerary is called a \textit{maximal periodic interval}. It is clear that $J$ is a half-open interval and that its boundary points have the property that they land on discontinuities of $T$ or on the boundary points of $I$. Moreover, no point in the interior of $J$ lands on a discontinuity. 
\end{definition}

An interval component of $X$ can be a maximal periodic interval as well, but a maximal periodic interval can be strictly contained in an interval component of $X$. The version of the Perturbation Lemma for a maximal periodic interval is only used in the proof of Lemma \ref{lem:pert-corr}.

We also need the following useful definition:

\begin{definition}
\label{def:trans-fac}
Let $n > 0$ be a natural number. We define the \textit{translation factor} $Tr(x,n)$ at time $n$ of a point $x \in I$ as the translation factor of $T^n$ restricted to $x$, i.e.\ $Tr(x,n) = T^n(x)-x$.
\end{definition}

This definition can easily be extended to intervals $K \subset I$ of points that have the same itinerary up to time $n > 0$, by calling the translation factor $Tr(K,n)$ at time $n$ of $K$ the translation factor of any point $x$ in $K$. Note that $Tr(K,n) = 0$ if and only if $K$ is periodic with period $n$. We denote the translation factors defined for a perturbation $\Tilde{T}$ of $T$ by $\Tilde{T}r$.

The Perturbation Lemma \ref{lem:pert-lem} has two parts, which can be roughly described as follows. The first states that all sufficiently small dynamical changes (of a certain type) of a return map $R_J$ to an interval component $J$ of $X$ are achievable by perturbations of the underlying map $T$ that depend continuously on the size of the dynamical changes. The second states that the perturbation can also be chosen so that we control two types of critical connections: those in the orbit of $J$ up to return time and those outside the orbit of $J$.

\begin{lemma}[Perturbation Lemma]
\label{lem:pert-lem}
Let $T$ be an interval translation map such that $T(I)$ is compactly contained in the interior of $I$. Let $J$ be one of the following: an interval component of $X$ or a maximal periodic interval. Then there exists an $\epsilon_0 > 0$ depending on $J$ and $T$, with the following properties. For every $\epsilon < \epsilon_0$ and every choice of $\epsilon^{\gamma}_1, \dots \epsilon^{\gamma}_{N}, \epsilon^{\beta}_0, \dots \epsilon^{\beta}_{N} \in (-\epsilon, \epsilon)$ there exists a perturbation $\Tilde{T}$ such that $|\Tilde{T} - T| \to 0$ as $\epsilon \to 0$, and the following holds:

\begin{enumerate}
    \item There exists an interval $\Tilde{J} \subset I$ that is $\epsilon$-close to $J$ and partitioned into intervals $\Tilde{J}_j = [\Tilde{a}_{j-1},\Tilde{a}_j)$, with $1 \le j \le N$, such that $\Tilde{J}_j$ maps forward continuously up to time $r_j$ under the iterates of $\Tilde{T}$ and has the same itinerary up to time $r_j$ as $J_j$ for all $1 \le j \le N$. In the case when $J$ is an interval component of $X$, we may set $\Tilde{a_j} - a_j = \epsilon^{\beta}_j$ for all $0 \le j \le N$;
    \item The difference between the translation factors of $\Tilde{J}_j$ and $J_j$ is $\epsilon^{\gamma}_j$ for all $1 \le j \le N$, i.e.\ $\Tilde{T}r(\Tilde{J}_j,r_j) - Tr(J_j,r_j) = \epsilon^{\gamma}_j$.
\end{enumerate}
Moreover, we have that:

\begin{enumerate}[label=(\alph*)]
    \item Let $T^n(\beta^+) = \betas^+$ be a critical connection such that $\beta^+$ and $T^n(\beta^+)$ are both contained in the $T$-orbit of $J_j$ up to time $r_j$ for some $1 \le i \le N$. We may assume that $\Tilde{\beta}^+$ is still contained in the $\Tilde{T}$-orbit of $\Tilde{J}_j$ up to time $r_j$ and the difference $\Tilde{T}^n(\Tilde{\beta}^+) - \Tilde{\beta}_*^+$ can be chosen arbitrary in $[0,\epsilon)$. Analogously for critical connections $T^n(\beta^-) = \betas^-$ and the difference $ \Tilde{\beta}_*^- - \Tilde{T}^n(\Tilde{\beta}^-)$;
    \item For every critical connection $T^n(\beta) = \betas$, with $\beta,\betas \notin O(J)$, such that either $\beta \notin X$ or $\beta, \betas$ are part of a single periodic orbit, the difference $\tilde T^n(\tilde \beta) - \tilde \betas$ can be chosen arbitrary in $(-\epsilon,\epsilon)$. \qed
\end{enumerate}
\end{lemma}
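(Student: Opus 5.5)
The plan is to reduce the statement to a linear-algebra (transversality) problem in the $2r-1$ parameters $(\beta_1,\dots,\beta_{r-1},\gamma_1,\dots,\gamma_r)$. All quantities in items (1)–(2) are affine functions of these parameters, provided itineraries are frozen. Concretely:
\begin{itemize}
\item the translation factor $Tr(J_j,r_j)$ is the sum of the $\gamma_i$ read along the itinerary of $J_j$ up to time $r_j$;
\item each endpoint $a_j$ with $1\le j\le N-1$ satisfies $a_j+Tr(a_j,l_j)=\beta$ for the discontinuity $\beta$ that $a_j$ first lands on;
\item by Lemma~\ref{lem:J-dynamics}(a), the boundary points $x=a_0$ and $y=a_N$ are iterates of signed discontinuities, so they too are affine in the parameters.
\end{itemize}
So I would fix the itineraries of all $J_j$ up to time $r_j$ and consider the affine map $\Phi$ sending a parameter perturbation to the vector of changes $(\delta a_0,\dots,\delta a_N,\delta Tr_1,\dots,\delta Tr_N)$. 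The hypothesis $T(I)\Subset \operatorname{int} I$ guarantees that every small parameter perturbation stays in $\ITM(r)$. Openness of the continuity intervals guarantees that, for perturbations of size $O(\epsilon)$ with $\epsilon<\epsilon_0$, the perturbed intervals $\Tilde J_j$ still map continuously with the same itineraries. Once I show that $\Phi$ restricted to a suitable subspace is surjective onto the relevant target, a right inverse gives a perturbation depending linearly, hence continuously, on the $\epsilon$'s, with $|\Tilde T-T|=O(\epsilon)$.

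The core step, and the main obstacle, is this surjectivity. I would exploit the combinatorial structure of the return map. Each $a_j$ is singled out by its first landing on a discontinuity; by choosing, for each $j$, a discontinuity $\beta$ reached by $a_j$ and moving $\beta$ while compensating neighbouring $\gamma$'s, one can shift $a_j$ alone. To make this precise I would order the $a_j$ by landing time $l_j$ and argue inductively (a triangular structure). The earliest-landing point $a_j$ can be moved by moving its discontinuity. That discontinuity is not visited in the interior of the orbits of points landing later, since interiors of continuity intervals avoid discontinuities before return (Lemma~\ref{lem:rj-facts}). Hence moving it does not disturb the already-arranged relations.

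For the translation factors I would use the $\gamma_i$ directly. Distinct $J_j$ have distinct itineraries, and adding a common amount to $\gamma_i$ while moving $\beta_i$ suitably changes $Tr(J_j,r_j)$ according to how many times the orbit of $J_j$ passes through $I_i$. I expect the delicate point to be showing that these count vectors, combined with the boundary constraints, are independent enough. Failing a clean argument, I would prove independence by the same triangular ordering, now using the last visited interval before return. For a maximal periodic interval the same scheme applies, but only the translation factors are prescribed, which is why the statement allows choosing $\Tilde a_j-a_j$ only for interval components.

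For the supplements I would proceed as follows.
\begin{itemize}
\item (a) A critical connection inside the orbit of a single $J_j$ is an affine relation $T^n(\beta^+)-\betas^+=0$. Since both points lie in the orbit of $J_j$, its perturbed value is determined by the $\delta\beta$'s and $\delta\gamma$'s along that orbit segment. I would add it to the target of $\Phi$, check that independence persists via the same triangular ordering, and choose the one-sided value in $[0,\epsilon)$ so that $\Tilde\beta^+$ stays in the orbit of $\Tilde J_j$.
\item (b) The connections $T^n(\beta)=\betas$ with $\beta,\betas\notin O(J)$ involve discontinuities outside the orbit of $J$. Those discontinuities, and the $\gamma$'s of the intervals they determine, do not enter the equations for $J$, so they form a complementary block. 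Within that block each such connection can be adjusted independently: when $\beta\notin X$ its orbit segment is a wandering piece, and when the connection lies on a single periodic orbit one can shift one discontinuity along it.
\end{itemize}
This completes the construction.
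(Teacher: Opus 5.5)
You have the right framework, and it is the one behind this lemma, which the paper imports from the transversality paper; its key input is visible here as Lemma~\ref{lem:lin-dep-rj}. Freeze itineraries, and the quantities in 1, 2 and (a) become values of affine functionals $\langle L_j,(\gamma\,\beta)\rangle$, $\langle C^{\pm}(j,k),(\gamma\,\beta)\rangle$, $\langle R^{\pm}_j,(\gamma\,\beta)\rangle$. What then remains is to show that the linearisation is onto, and that surjectivity is the entire content of the lemma. Your proposal does not supply it.

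What is needed is exactly Lemma~\ref{lem:lin-dep-rj}. It says every vanishing combination has $\alpha^{+}(j,k)=\alpha^{+}_j=-\alpha^{-}(j+1,k')=-\alpha^{-}_{j+1}$ and $\alpha_j=\alpha^-(j,1)+\alpha^+(j,1)$. So the only relations are $L_j+\sum_k C^-(j,k)+R^-_j=L_{j-1}+\sum_k C^+(j-1,k)+R^+_{j-1}=\bigl(\sum_s k_s(J_j,r_j)\,\bm{e}_s,0\bigr)$, which express that $J_j$ is translated rigidly. After discarding the $R^+_j$, the family is independent.

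Your triangular heuristics do not establish this.
\begin{itemize}
\item Translation factors depend only on $\gamma$, so item 2 needs the count vectors of the $J_j$ to be linearly independent. Distinct itineraries give nothing here: $(1,2)$ and $(2,1)$ both give $\bm{e}_1+\bm{e}_2$.
\item Ordering by the last interval visited before return is not injective, since several $J_j$ can enter $J$ from the same $I_s$.
\item Shifting $a_j$ alone by moving $\beta(j)$ breaks the connections $\beta^{\pm}(j,1)\to\beta^{\pm}(j,2)$. When both chains continue, one of them acquires the sign forbidden in (a).
\item Repairing those connections forces you to move $\beta^{\pm}(j,2)$, whose other side may lie in a different chain. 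For example, if $T^m(a_j^+)=\beta_i=T^{m'}(a_{j'}^-)$ and both connections are kept, then $a_j-a_{j'}=Tr(a_{j'}^-,m')-Tr(a_j^+,m)$ is a pure $\gamma$-expression. So $a_j$ cannot move relative to $a_{j'}$ without changing $\gamma$, and hence the translation factors.
\end{itemize}
Excluding unexpected relations requires genuinely dynamical input: disjointness of the tower over $J$ and the structure of the critical chains. That is what the transversality lemma provides.

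Part (b) has a second gap: its parameters do not form a complementary block. The $\gamma_s$ are shared by all orbits, and $O(J)$ can pass through the interior of $I_s$ even when $\beta_{s-1},\beta_s\notin O(J)$. Worse, take connections on a periodic cycle outside $O(J)$. The sum of their defects around the cycle equals $\sum_s k_s\gamma_s$, the total displacement over one period, and no motion of discontinuities changes it. So once you alter $\gamma$ to achieve item 2, even keeping these connections, let alone prescribing them, requires the count vectors of those cycles to be independent of the vectors attached to $J$. That is a further transversality statement you would have to prove.
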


In (a), the changes are such that the itineraries of the critical points are preserved, while this is not necessarily the case in part (b). The assumption that $T(I)$ is compactly contained in the interior of $I$ holds for every $\ITM$ in a complement of finitely many hyperplanes contained in the boundary of $\ITM(r)$, so this may be assumed without loss of generality, and it allows us not to include several necessary assumptions on the $\epsilon, \epsilon^{\gamma}_1, \dots \epsilon^{\gamma}_{N}, \epsilon^{\beta}_0, \dots \epsilon^{\beta}_{N}$ that guarantee that the perturbed map still has image contained in $[0,1)$. 

\section{Eventually periodic maps}
\label{sec:ep-maps} 
In this section, we define eventually periodic maps and analyse their dynamical properties.

\begin{definition}[Eventually periodic maps]
\label{def:ep-maps}
We say that $T$ is \textit{eventually periodic} if every point $x \in I$ eventually lands on a $T$-periodic point.
\end{definition}

Eventually periodic maps are the simplest non-trivial maps, an analogue of Morse--Smale systems from smooth interval dynamics (see \cite{MR1239171}). They are an important example of maps that are dense in the parameter space:

\begin{lemma}[Eventually periodic maps are dense]
\label{lem:ep-dense}
The set of eventually periodic maps forms a dense subset of $\ITM(r)$ for all $r \ge 1$.
\end{lemma}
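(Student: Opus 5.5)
The plan is to show that $\ITM$s with rational parameters are eventually periodic, and that such parameters are dense in $\ITM(r)$.

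\emph{Step 1: rational maps are eventually periodic.} Suppose all $\beta_i$ and $\gamma_i$ lie in $\frac{1}{q}\mathbb{Z}$ for some integer $q \ge 1$. Fix $x \in I$ and consider the finite set
\[
F_x := \left(x + \tfrac{1}{q}\mathbb{Z}\right) \cap [0,1).
\]
For every $y \in F_x$, $T(y) = y + \gamma_i$ for the index $i$ with $y \in [\beta_{i-1},\beta_i)$. Since $\gamma_i \in \frac{1}{q}\mathbb{Z}$ and $T(I) \subset I$, we get $T(y) \in F_x$. Hence $T$ restricts to a self-map of the finite set $F_x$. Every orbit of a self-map of a finite set is eventually periodic, so $x$ lands on a $T$-periodic point. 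As $x$ was arbitrary, $T$ is eventually periodic in the sense of Definition~\ref{def:ep-maps}. No care about signed points or discontinuities is needed here, because the argument is purely about the actual orbit of $x$.

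\emph{Step 2: rational parameters are dense.} Let $T \in \ITM(r)$ have parameters $(\beta_1,\dots,\beta_{r-1},\gamma_1,\dots,\gamma_r)$, and let $\delta > 0$.
\begin{itemize}
\item For $r = 1$ the only map is $\gamma_1 = 0$, the identity, which is trivially eventually periodic, so assume $r \ge 2$.
\item Choose rationals $\beta_i'$ with $|\beta_i' - \beta_i| < \delta$ and $0 < \beta_1' < \dots < \beta_{r-1}' < 1$. This is possible because the ordering is strict.
\item The admissible interval $[-\beta_{i-1}', 1-\beta_i']$ for $\gamma_i'$ has length $1 - (\beta_i' - \beta_{i-1}') > 0$, since $r \ge 2$ forces $\beta_i' - \beta_{i-1}' < 1$.
\item Its endpoints depend continuously on the $\beta'$, so they are within $\delta$ of the old endpoints. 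Let $\hat\gamma_i$ be the closest point of the new interval to $\gamma_i$; then $|\hat\gamma_i - \gamma_i| \le \delta$.
\item Take a rational $\gamma_i'$ in this positive-length interval within $\delta$ of $\hat\gamma_i$.
\end{itemize}
The resulting map $T' \in \ITM(r)$ has rational parameters with a common denominator $q$ and lies within $O(\delta)$ of $T$. By Step 1, $T'$ is eventually periodic, which proves density.

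I do not expect a real obstacle here. The only point needing attention is Step 2, making sure the perturbed translation factors still satisfy the constraint $\gamma_i' \in [-\beta_{i-1}', 1-\beta_i']$. The projection-then-approximate argument handles this, including the case where $\gamma_i$ lies on the boundary of its admissible interval.
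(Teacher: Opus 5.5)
Your proof is correct and follows the paper's route: rational parameters are dense, and rational translation factors confine each orbit to the finite set $(x+\frac{1}{q}\mathbb{Z})\cap[0,1)$. This is the same as the paper's observation that the set of integer combinations of the $\gamma_i$ is discrete. Your Step 2 makes explicit the density of rational points in the constraint polytope, which the paper takes for granted. As the paper notes, Step 1 does not actually need the $\beta_i$ to be rational.
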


This lemma is a consequence of the simple observation that the maps $T$ with rational $\gamma$ parameters are eventually periodic. 

\begin{proof}
Since $\ITM(r)$ is a polytope in $\mathbb{R}^{2r-1}$, the points with rational coordinates are clearly dense in it. We now show that the maps corresponding to such points are eventually periodic. In fact, it is enough that only the coefficients $\gamma_1, \dots, \gamma_r$ are rational. Let $\mathbb{N}(\gamma)$ be the set of all linear combinations of $\gamma_1, \dots, \gamma_r$ with integer coefficients. This set is discrete, because the minimal distance between any of its elements, if non-zero, is at least $\frac{1}{q^r}$, where $q$ is the largest denominator of the numbers $\gamma_1, \dots, \gamma_r$. Thus for any $x \in I$, the set $x + \mathbb{N}(\gamma) \cap [0,1)$ is finite, which means that $x$ must eventually land on a periodic point.
\end{proof}

An important characterisation of eventually periodic maps is the following:

\begin{lemma}
\label{lem:disc-ep->map-ep}
Let $T$ be a map for which every point in $\mathcal{C}$ is eventually periodic. Then the $T$ is eventually periodic as well.
\end{lemma}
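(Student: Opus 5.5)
The plan is to use the Orbit Classification Lemma~\ref{lem:orb-class}: every point $x \in I$ is precritical, preperiodic, or accumulates on discontinuities. Preperiodic points are fine by definition. Precritical points land on some $\beta \in \mathcal{C}$, and by hypothesis $\beta$ is eventually periodic, so $x$ is eventually periodic as well. One care point: $x$ may land exactly on the point $\beta$, i.e.\ on the signed point $\beta^+$ in the half-open convention. The forward orbit of $x$ then agrees with the orbit of $\beta^+$, and $\beta^+ \in \mathcal{C}$ is eventually periodic by assumption. So the whole proof reduces to excluding the third alternative for points that are neither precritical nor preperiodic.

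Suppose then that $x$ is neither precritical nor preperiodic, so its orbit accumulates on discontinuities $\beta$ from the left and $\betas$ from the right. Consider $\beta$ approached from the left. Its signed point $\beta^-$ is eventually periodic by hypothesis: $T^{m}(\beta^-) = p^-$ lies on a periodic orbit of some period $q$.

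The key step is to exploit the local dynamics near this orbit. For $\delta>0$ small, the interval $(\beta-\delta,\beta)$ maps isometrically by $T^{m}$ onto $(p-\delta,p)$, provided no point of it is precritical up to time $m$. This can be arranged by shrinking $\delta$, since only finitely many preimages of discontinuities of bounded order exist. Similarly, $(p-\delta,p)$ maps by $T^q$ isometrically onto itself, with translation factor $Tr=0$, again after shrinking $\delta$. This uses that the periodic orbit of $p^-$, viewed as a signed orbit, passes through discontinuities only as one-sided limits: a left neighbourhood of each point of the orbit maps continuously. Hence every point of $(\beta-\delta,\beta)$ is preperiodic.

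Since the orbit of $x$ enters $(\beta-\delta,\beta)$, the point $x$ would be preperiodic, a contradiction. So the accumulation alternative reduces to one of the first two cases, and the lemma follows.

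The main obstacle is justifying that one-sided neighbourhoods of the signed periodic orbit consist of periodic points with the same itinerary, i.e.\ that a uniform $\delta$ works along the whole periodic cycle. This holds because the cycle is finite, and at each of its points we only need continuity of $T$ from one side. Continuity from the left is automatic for $-$-type signed points, and continuity from the right for $+$-type points, by the definition $T(\beta^\pm)=\lim$. With this in place, $T^q$ fixes the one-sided neighbourhood pointwise, since an orientation-preserving translation fixing a point is the identity.
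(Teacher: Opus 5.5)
Your proof is correct and follows essentially the same route as the paper: apply the Orbit Classification Lemma~\ref{lem:orb-class}, dispose of precritical points using the hypothesis on $\mathcal{C}$, and handle the accumulation case by showing that a one-sided neighbourhood of each eventually periodic signed discontinuity maps forward continuously into a periodic interval. The only difference is that the paper builds a single uniform $\epsilon$ for all discontinuities at once, which it reuses in the proof of Lemma~\ref{lem:ep-fin-type}, whereas you choose $\delta$ only for the discontinuity that the orbit of $x$ accumulates on; that is enough for this lemma.
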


\begin{proof}
We first prove that there exists an $\epsilon > 0$ such that for each $\beta^+ \in \mathcal{C}^+$, resp. $\beta^- \in \mathcal{C}^-$, the interval $[\beta, \beta+\epsilon)$, resp. $[\beta-\epsilon,\beta)$, is eventually periodic and maps forward continuously for all times. We prove this for $+$-type discontinuities, with the other case being analogous.

Let $N$ be the smallest time such that $T^N(\beta^+)$ is a periodic point for all $\beta^+$, and let $p$ be the maximal period of these points. Let $\epsilon > 0$ be the closest positive distance the orbit of any $\beta^+$ up to time $N+p$ gets to the left of a point in $\mathcal{C}^+$. Note that $\beta^+$ can land on another critical point, but in our definition of $\epsilon$ we ignore these times and look only at the positive distances. We claim that this is our required $\epsilon$. Indeed, if the iterates of any interval $[\beta, \beta + \epsilon)$ contain a discontinuity before time $N+p$, it would mean that the distance between a point in the orbit of $\beta^+$ and another critical point would be less than $\epsilon$, contradicting the definition of $\epsilon$. Thus, every such interval maps forward continuously up to time $N+p$. Since $T^N(\beta^+)$ is periodic and has period $\le p$, we see that the entire interval $[T^N(\beta), T^N(\beta) + \epsilon)$ consists of points of the same period, which shows that $\epsilon$ has the required property.

By the Orbit Classification Lemma, each point $x \in I$ is either periodic, lands on a discontinuity, or accumulates on a discontinuity. By assumption, every point in $\mathcal{C}$ is eventually periodic, so every $x$ that lands on a discontinuity is also eventually periodic. If $x$ accumulates on some discontinuity $\beta$ then it must eventually enter an $\epsilon$-neighbourhood of $\beta$. By the previous paragraph, this means that it is eventually periodic, and thus every point $x \in I$ is eventually periodic. 
\end{proof}

Because of Lemma \ref{lem:disc-ep->map-ep}, we have that $\mathcal{C} = \mathcal{C}_0$ if and only if $T$ is eventually periodic. An important property of eventually periodic maps is that they are of finite type:

\begin{lemma}
\label{lem:ep-fin-type}
Eventually periodic maps are of finite type.
\end{lemma}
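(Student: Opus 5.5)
We want to show that if every point of $I$ eventually lands on a periodic point, then $X_{n+1}=X_n$ for some $n$. The plan is to show that $X$ equals the set $P$ of periodic points, that $P$ is a finite union of intervals, and that $X_n = P$ for some finite $n$.

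\textbf{Step 1: $P$ is a finite union of maximal periodic intervals.} Each periodic point $x$ that never lands on a discontinuity has a neighbourhood of points with the same itinerary. Those points therefore have the same translation factor $Tr(\cdot,p)=0$, so they are periodic with the same period. Hence $x$ lies in a maximal periodic interval in the sense of Definition~\ref{def:max-per-int}. Periodic points that land on discontinuities are the forward orbits of points of $\mathcal{C}$ (and $0^+$), so there are finitely many of them. By Definition~\ref{def:max-per-int}, the endpoints of maximal periodic intervals land on discontinuities or on $\partial I$. Each discontinuity has a single forward orbit, which by hypothesis is eventually periodic and so contains only finitely many distinct points. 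Thus only finitely many points can serve as such endpoints, and there are finitely many maximal periodic intervals. So $P$ is a finite union of half-open intervals, and $T(P)=P$ because $T$ permutes periodic orbits.

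\textbf{Step 2: $X = P$.} We have $P\subset X$ since $P=T^n(P)\subset T^n(I)$ for all $n$. For the converse, a uniform bound on preperiod is the key. Take $N$, $p$ and $\epsilon$ as in the proof of Lemma~\ref{lem:disc-ep->map-ep}: the one-sided $\epsilon$-neighbourhoods of discontinuities become periodic by time $N$ and map continuously thereafter. Then apply the compactness argument below.

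\textbf{Main obstacle: a uniform preperiod.} I would show that there is $M$ such that $T^M(I)\subset P$. Every $x$ is eventually periodic, so the preperiod $m(x)$ is finite. We need $m$ to be bounded. To get this, cover $I$ as follows. Each $x$ has some time $k$ with $T^k(x)\in P$. If $T^k(x)$ lies in the interior of a maximal periodic interval, then a one-sided neighbourhood of $x$ that maps continuously up to time $k$ lands in $P$ at time $k$. Choose neighbourhoods on both sides, using the signed-point convention: at a signed point $x^\pm$ the map is continuous on the corresponding side.

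If instead $T^k(x)$ is an endpoint of a periodic interval, or lands on a discontinuity, use $\epsilon$-neighbourhoods of the discontinuities: these become periodic by time $N+p$. For a point $x$ that lands on $\beta$ at time $l$, the small one-sided intervals at $x^\pm$ map onto $[\beta,\beta+\epsilon')$ or $(\beta-\epsilon',\beta)$, which are periodic after $N$ more steps.

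This gives a cover of the compact space of signed points of $[0,1]$ (the order topology with doubled points) by one-sided open sets on which the time to reach $P$ is bounded. Extracting a finite subcover yields a uniform $M$. Then $X_M = T^M(I)\subset P\subset X\subset X_M$, so $X_{M+1}=X_M=P$, and $T$ is of finite type.

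The care needed is in the covering step at points whose orbits hit endpoints of periodic intervals. I would handle these, as above, by noting that such endpoints lie on orbits of discontinuities and using the $\epsilon$-neighbourhoods constructed in Lemma~\ref{lem:disc-ep->map-ep}.
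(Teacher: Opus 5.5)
Your argument is correct, but it obtains the uniform bound by a different route from the paper.

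The paper also starts from the $\epsilon$, $N$, $p$ of Lemma~\ref{lem:disc-ep->map-ep}. It then bounds the entry time explicitly, by a left-to-right sweep inside each branch $I_s$. Starting at $x_0=\beta_{s-1}^+$, it repeatedly takes the rightmost point $x_{i+1}\in[x_i,x)$ that hits a critical point at the next landing time. Because the one-sided intervals $[\beta,\beta+\epsilon)$ never meet a discontinuity again, consecutive $x_i$ are at least $\epsilon$ apart, and the time gaps are at most $N+p$. So after at most $|I_s|/\epsilon$ steps, $x$ lies in some $[\beta,\beta+\epsilon)$. This gives an explicit $M$ of order $(N+p)|I_s|/\epsilon$ with $T^{M+N}(I)$ equal to the union of periodic intervals.

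You replace the sweep with a purely local statement plus compactness. Each signed point has a one-sided neighbourhood that enters $P$ in bounded time, either directly or via a one-sided $\epsilon$-neighbourhood of the first discontinuity it hits. Compactness of $[0,1]$, or of the split interval of signed points, then gives the uniform bound.

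What each buys: your route is shorter and avoids the sweep's bookkeeping, and it makes the identification $X=P$ explicit. The paper's route gives an effective bound on the time at which $X_n$ stabilises.

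Two small remarks.
\begin{enumerate}
\item The finiteness of the number of maximal periodic intervals in your Step~1 is not needed. $T^M(I)$ is automatically a finite union of intervals, and you only use $T(P)=P$ and the fact that endpoints of maximal periodic intervals land on discontinuities.
\item Exactly as in the paper, you tacitly need the signed points $\beta^-$ (and $x^-$) to be eventually periodic in order to use the $\epsilon$ of Lemma~\ref{lem:disc-ep->map-ep}, whereas the hypothesis is stated for points of $I$. This holds because once the orbit of a signed point stops hitting discontinuities, it coincides with the orbit of an actual point of $I$. It is worth saying this in one line.
\end{enumerate}
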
  

\begin{proof}
Let $\epsilon$ and $N$ be as in the proof of Lemma \ref{lem:disc-ep->map-ep}. We will prove that there exists an $M \ge 1$ such that the $T$-orbit of every $x \in I$ enters an $\epsilon$-neighbourhood of some discontinuity $\beta$ in less than $M$ iterates. Then every point enters a periodic interval of $T$ after less than $M+N$ iterates. This implies that $T^{M+N}(I) = T^{M+N+1}$, as they must both be equal to the union of all periodic intervals of $T$, and $T$ is therefore of finite type.

Let $x$ be a point in one of the intervals $I_s$. By assumption, $\beta_{s-1}^+$ is eventually periodic. We define a finite sequence of eventually points $x_0 = \beta_{s-1}^+ < x_1 < \dots < x_t < x$ and times $n_0 = 0 < n_1 < \dots < n_t$ such that $T^{n_i}(x_i) \in \mathcal{C}$ and $d(T^{n_i}(x_i), T^{n_{i-1}}(x_{i-1})) \ge \epsilon$ for all $1 \le i \le t$ by the following inductive procedure. 

If $x_i$ is already $\epsilon$-away from $x$, we stop the procedure. Otherwise, define $n_{i+1}$ to be the minimal time the orbit of the interval $[x_i, x)$ contains a critical point. Such a time exists because $x$ and $x_i$ have different itineraries, as $x_i$ is eventually periodic, while $x$ is not. Let $x_{i+1}$ be the rightmost point in $[x_i, x)$ which lands on a critical point at time $n_{i+1}$. We have that $n_{i+1} > n_i$, because $x_i$ is the rightmost point in $[x_{i-1}, x)$ which lands on a critical point by time $n_i$, which means that the interval $[x_i,x)$ maps forward continuously up to time $n_i$. Moreover, $n_{i+1} - n_i \le N + p$, as $x_i$ becomes periodic with period at most $p$, after at most $N$ iterations. We claim that $x_{i+1} - x_i \ge \epsilon$. Indeed, by construction, $T^{n_i}([x_i,x)) = [\beta, y)$, for some $\beta^+ \in \mathcal{C}^+$. The interval $[\beta, \beta + \epsilon)$ maps forward continuously for all times, which means that only the points in $[\beta + \epsilon, y)$ can land on critical points. Thus $x_{i+1} - x_i \ge \epsilon$, which means that the inductive procedure terminates at some finite time.

By construction, the interval $[x_t,x)$ maps forward continuously up to time $n_t$ when $x_t$ lands on some critical point $\beta^+$. At this time, $x$ enters the $\epsilon$ neighbourhood of $\beta$ and therefore gets mapped into a periodic interval after at most $N$ iterates. Since $n_{i+1} - n_i \le N + p$, we have that $n_t \le (N+p)\lfloor \frac{\beta_{s}-\beta_{s-1}}{\epsilon} \rfloor < M$, for a sufficiently large $M$ that is independent of $x$, which is what we wanted to prove.
\end{proof}

In the proof of Theorem \ref{thm:ep->acc+m}, it will be important to know that a map $T$ remains of finite type and eventually periodic under perturbation. Lemma \ref{lem:ep-fin-type} and Lemma \ref{lem:disc-ep->map-ep} give us a simple criterion for both of these properties: if all of the discontinuities of $T$ remain eventually periodic.

\section{Eventually periodic maps can be approximated by stable maps}
\label{sec:stab-approx-ep} 

\subsection{Overview of the proof of Theorem \ref{thm:ep->acc+m}}
\label{subsec:ep->acc+m-proof-overview}

In this subsection, we review the proof of the following theorem:

\begin{theorem}
\label{thm:ep->acc+m}
Let $T$ be an eventually periodic map. Then arbitrarily close to $T$ in the parameter space, there is a stable map $\Tilde{T}$.  
\end{theorem}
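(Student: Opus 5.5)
We plan to reduce the theorem, via the Characterisation of Stability (Theorem~\ref{thm:stability=accm}), to constructing an arbitrarily small perturbation $\Tilde{T}$ of $T$ that is of finite type and satisfies ACC and Matching. Finite type will be kept for free. Since $T$ is eventually periodic, every $\beta \in \mathcal{C}$ is in $\mathcal{C}_0$. We will arrange all perturbations so that every discontinuity of $\Tilde{T}$ stays eventually periodic. Then Lemma~\ref{lem:disc-ep->map-ep} and Lemma~\ref{lem:ep-fin-type} give that $\Tilde{T}$ is eventually periodic and of finite type. We first move $T$ slightly into the region where $T(I)$ is compactly contained in the interior of $I$, so that the Perturbation Lemma~\ref{lem:pert-lem} applies. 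This preserves eventual periodicity, since by Lemma~\ref{lem:ep-dense} we may take rational $\gamma$'s.

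For an eventually periodic map, $X$ is the finite union of periodic intervals. We will treat the finitely many interval components $J$ of $X$ one at a time, and then the discontinuities outside $X$.

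\textbf{Components of $X$.} For each dynamically non-trivial component $J$, the return map $R_J$ is a periodic $\IET$ on $N$ continuity intervals. We apply Lemma~\ref{lem:pert-lem}(1),(2) with rational choices of $\epsilon^\gamma_j$ and $\epsilon^\beta_j$. The aim is to turn $R_J$ into a two-interval exchange, i.e.\ a rotation by a rational angle. The new partition points should then be the only point landing on a critical point, which gives Matching.

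In practice we will use part~(a) to break every critical connection inside the orbit of $J$ up to return time. Breaking these connections removes double critical hits along orbits (A1) and hits by boundary points (A2). We will then split $J$ into several new components: each is either dynamically trivial, i.e.\ a maximal periodic interval, or carries a single-cut rational rotation.

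Concretely, the strategy is to choose the translation factors so that the components of the perturbed attractor are cycles of maximal periodic intervals. Each boundary point then lands exactly on a discontinuity only once per cycle. Any interior point still landing on a discontinuity is handled by a small shift, either making the interval dynamically trivial or leaving exactly one such point.

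\textbf{Discontinuities outside $X$.} For A3, we use Lemma~\ref{lem:pert-lem}(b) on each connection $T^n(\beta)=\betas$ with $\beta \notin X$. Shifting by a tiny rational amount of the appropriate sign destroys ghost-preimage relations, so that the ghost tree $\mathcal{GT}(\beta)$ does not return to $\beta$. The signs must be chosen consistently, e.g.\ by ordering the discontinuities outside $X$ along the finite directed ghost graph and breaking each cycle in it. After the shift, $\beta$ lands in the interior of a periodic interval rather than on a discontinuity, so it is still eventually periodic. Because all changes are open conditions on critical orbits of bounded length, later perturbations of size much smaller than earlier ones preserve what was achieved before. Iterating over the finitely many objects therefore produces $\Tilde{T}$.

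\textbf{Main obstacle.} The hardest step is Matching together with A1/A2 inside a non-trivial component $J$. Perturbing $R_J$ may create a Cantor-like or infinite type attractor, or split $J$ so that new components have several cut points. We will control this by keeping all parameters rational, so the perturbed map is again eventually periodic. We will then argue inductively on $N$: each perturbation either makes some continuity interval of $R_J$ a maximal periodic interval, so that it detaches as a trivial component, or reduces the number of interior precritical points. This continues until each remaining non-trivial component has exactly one such point.
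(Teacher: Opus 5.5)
The gap is at the step you flag as the main obstacle, and that step carries the whole content of the theorem. There the proposal states a goal but gives no argument.

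\textbf{The core step is not proved.} A small perturbation keeps the itineraries of all $N$ continuity intervals of $R_J$ up to their return times, so $R_J$ cannot be turned into a two-interval exchange. What a perturbation actually does is open holes (and create overlaps) in $\tilde R_J(\tilde J)$. Then $\tilde X\cap\tilde J$ is determined by how these holes propagate under $\tilde R_J$, which is the hole-propagation problem described in the overview of the proof.

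Breaking every critical connection in $O(J)$ and keeping parameters rational does give finite type. But nothing controls the new attractor: its components may again have several cut points, discontinuities on their boundary, or new critical connections, and rational choices do not exclude these. Your dichotomy (``detach a continuity interval as a trivial component, or reduce the number of interior precritical points'') is asserted without proof, and no quantity is shown to decrease globally.

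The paper supplies both missing ingredients:
\begin{itemize}
\item a decreasing invariant, the unstable number $U(T)=\sum_{C\in\mathcal Z}(|C|-1)+|\partial X\cap\mathcal C|$;
\item an explicit perturbation that lowers it. For $N\ge 4$, one shifts the two continuity intervals whose images meet at the second critical value $v_2$ by $-\epsilon_1$ and $+\epsilon_2$. This keeps $\tilde R_J(J)\subset J$, and the hole $[v_2-\epsilon_1,v_2+\epsilon_2)$ reaches the first cut point $a_p$ in its orbit before meeting the doubly covered regions near $v_1$ and $v_3$.
\end{itemize}
The ratio condition $-\epsilon_2<-a\epsilon_1+b\epsilon_2<\epsilon_1$ keeps $a_p$ strictly inside the shifted hole. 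So a discontinuity from a non-trivial cycle leaves $X$ without landing on $\partial\tilde X$. The cases $N\le 3$ need separate perturbations.

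\textbf{You do not need to build single-cut rotations by hand.} The paper first arranges the correspondence property (Lemma~\ref{lem:pert-corr}), so that every component of $X$ is tiled by iterates of one periodic interval $P(\beta^+)$. Perturbing by rational multiples of $|P(\beta^+)|$ then keeps $\tilde R_J$ a rational $\ITM$, using only intrinsic properties of the current map. Your global rationality is not preserved in the same way, because Lemma~\ref{lem:pert-lem} does not assert that $\tilde T$ again has rational parameters. Once $U=0$, Lemma~\ref{lemma:corr+u=a1a2m} gives A1, A2 and Matching automatically.

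\textbf{Discontinuities in $\partial X$ are missed.} Suppose $\beta^+\in\partial X$ is periodic and $\beta^-\notin X$. Then $\beta^+$ is a ghost preimage of $\beta^-$ through the self-connection $T^p(\beta^+)=\beta^+$, which lies inside $X$. Your A3 step only touches connections starting outside $X$, so it leaves this one alone. The paper removes it with the $N=1$-type perturbation that lets $\beta^+$ land just to the left of itself.

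\textbf{Preservation under later perturbations.} The claim that smaller later perturbations preserve earlier achievements is not automatic. Eventual periodicity is not an open condition: a rational rotation becomes irrational under a generic small change. Preservation has to come from part~(b) of Lemma~\ref{lem:pert-lem}, not from smallness.
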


The strategy of the proof is to produce a finite sequence $T_1, \dots, T_k$, for some $k > 0$, of arbitrarily small finite type perturbations of a given eventually periodic map $T$, such that the last map $T_k$ in this sequence is stable. The sequence will be produced in such a way that for each $0 \le i < k$, the so-called \textit{unstable number} (Definition \ref{def:unst-num}) of $T_{i+1}$ is strictly smaller than the unstable number of $T_{i}$. When the unstable number is $0$ and the map satisfies the \textit{correspondence property} (Definition \ref{def:corr}), then the map satisfies A1, A2, and Matching (Lemma \ref{lemma:corr+u=a1a2m}). An additional perturbation then gives A3 as well, which makes the map stable. 

The unstable number is lowered by removing critical connections and boundary discontinuities from $X$. This can seemingly be achieved by simple perturbations of the parameters $(\gamma \, \beta)$, but as always, the main difficulty is in controlling the global dynamics under this perturbation. One of the main difficulties is the fact that there is no good general criterion for when a map $T$ is of finite type, except if it is eventually periodic (Lemma \ref{lem:ep-fin-type}).

To demonstrate this, consider Figure \ref{fig:pert to create a hole}, in which a perturbation has been chosen so that an interval $J_{\epsilon}$ (marked in \textcolor{black}{red}) whose forward iterates contain a discontinuity $\beta$ of $T$ is no longer in the image of the return map to an interval component $J$ of $X$. This implies that $J_{\epsilon}$, and therefore $\beta$, is no longer in $X$, as shown in Figure \ref{fig:propagation of a hole}.

\begin{figure}[h]
    \centering
    \includegraphics[width=0.9\linewidth]{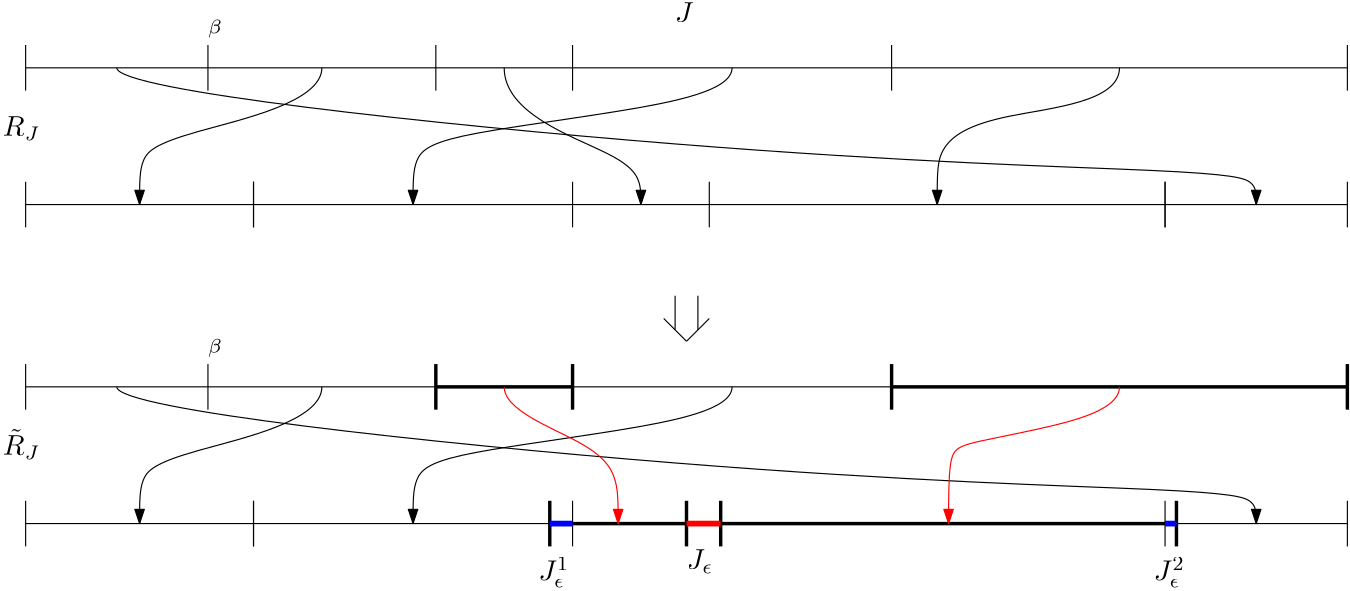}
    \caption{Perturbing a map to create a hole interval $J_{\epsilon}$ marked in red. Forward $\tilde{R}_J$-iterates of $J_{\epsilon}$ contain $\beta$, which is therefore removed from $X$ by this perturbation, resulting in a smaller unstable number. This is depicted in Figure \ref{fig:propagation of a hole}}
    \label{fig:pert to create a hole}.
\end{figure}

The same should then hold for the iterated images $\Tilde{R}_J(J_{\epsilon}), \Tilde{R}^2_J(J_{\epsilon}), \dots$ of $J_{\epsilon}$, except possibly at the intersection of these images with the subintervals $J^1_{\epsilon}$ and $J^2_{\epsilon}$ (marked in \textcolor{black}{blue}). This is because they have multiple $\Tilde{R}_J$-preimages (two in the case of Figure \ref{fig:pert to create a hole}), and some of them might still be contained in $X$ after this perturbation. At the moment, there is no good criterion available that prevents these iterated images from propagating through the entire interval $J$, as is suggested in Figure \ref{fig:propagation of a hole}, which would result in an infinite type map.

\begin{figure}[h]
    \centering
    \includegraphics[width=0.9\linewidth]{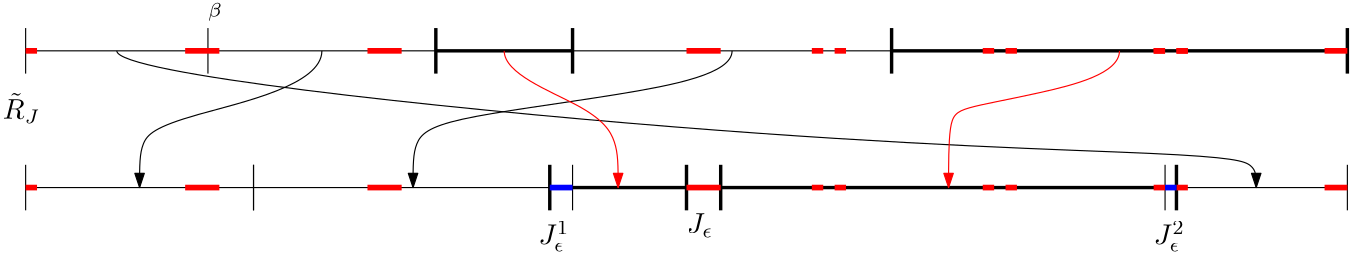}
    \caption{The first few $\tilde{R}_J$ iterates of $J_{\epsilon}$ are drawn in red, indicating that they are no longer contained in $\tilde X$. At some point, these iterates intersect $J^2_{\epsilon}$, which is drawn in blue, and are not necessarily removed from $ \tilde X$, so we do not mark them and their forward iterates in red.}
    \label{fig:propagation of a hole}
\end{figure}

One case in which a perturbation can be chosen so that this problem does not arise is when iterates of a single periodic interval cover the entire interval $J$. In that case, every parameter of the return map $R_J$ (viewed as an $\IET$) is a rational multiple of the length $|P|$ of $P$. Then it can be shown that if the perturbed return map $\Tilde{R}_J$ still satisfies that all of its parameters are rational multiples of $|P|$, then the resulting restriction $\Tilde{R}_{\vert J}$ is of finite type (see the proof Theorem \ref{thm:ep->acc+m} for details). That this is the case is guaranteed by the correspondence property (Definition \ref{def:corr}), which can be achieved after an arbitrarily small perturbation that keeps the map eventually periodic (Lemma \ref{lem:pert-corr}).

To produce these perturbations, we repeatedly use the Perturbation Lemma \ref{lem:pert-lem}. We use 1.,2. and (a) to change the return map around the preimages of an appropriately chosen discontinuity $\beta$ that is removed from $X$ with this perturbation, which results in a strict lowering of the unstable number of $T$. We use (b) to simply keep all of the critical connections outside of the orbit of $J$, which preserves the orbits of all periodic discontinuities outside of the orbit of $J$.

\subsection{Unstable number and correspondence}

In this subsection, we prove a few preliminary lemmas needed in the proof of Theorem \ref{thm:ep->acc+m} in the next subsection. Our main tool will be the Perturbation Lemma \ref{lem:pert-lem}. We first introduce a few concepts related to the dynamics of the discontinuities of an eventually periodic map $T$.

\begin{definition}[Cycle of discontinuities]
Let $T$ be an eventually periodic map. Let $\beta \in X$ be a periodic discontinuity of $T$. The cycle of $\beta$ is defined as:
\[
C(\beta) = O(\beta) \cap \mathcal{C},
\]
i.e.\ the set of all discontinuities in the orbit of $\beta$.
\end{definition}

We say that a cycle is non-trivial if $C(\beta) \neq \{\beta\}$. In this case, $\beta$ lands on at least one other discontinuity of $T$.

Assuming $T$ is eventually periodic, for all $\betas, \betass \in X$ we have that $\betas \in C(\betass)$ if and only if $\betass \in C(\betas)$. Thus the set of all discontinuities contained in $X$ is partitioned into cycles, which we consider as equivalence classes of the relation $\betas \simeq \betass$ if and only if $\betas \in O(\betass)$. Let $\mathcal{Z}$ be the set of all equivalence classes, i.e.\ the set of all cycles.

Because of Theorem \ref{thm:stability=accm}, stable eventually periodic maps have the property that the size of each cycle is exactly one. Such a map also has no discontinuities in the boundary of $X$. With this in mind, we define a number which tells us how far an eventually periodic map is from having this property:

\begin{definition}[Unstable number of $T$]
\label{def:unst-num}
Let $T$ be an eventually periodic map. Then the \textit{unstable number} of $T$, denote by $U(T)$, is defined as:
\[
U(T) \coloneqq \left( \sum_{C \in \mathcal{Z}} (|C|-1) \right) + |\partial X \cap \mathcal{C}|
\]
\end{definition}

For a periodic discontinuity $\beta$, we will denote by $P(\beta)$ the maximal periodic interval containing $\beta$, in the sense of Definition \ref{def:max-per-int}. For brevity, we usually omit stating maximality when discussing periodic intervals, but it is always assumed.

\begin{definition}[Correspondence]
\label{def:corr}
Let $T$ be an eventually periodic map. We say that $T$ has the \textit{correspondence property} if for each discontinuity $\beta^+ \in X$, we have that $\beta^-$ eventually lands into the maximal periodic interval $P(\beta^+)$ and if analogously for every discontinuity $\betas^- \in X$, $\betas^+$ eventually lands into $P(\betas^-)$.
\end{definition}
In other words, if one part (either $+$ or $-$) of a discontinuity is periodic, and therefore is in a boundary of a periodic interval, then the other part must land into this periodic interval. Note that we do not assume whether the other part of the periodic discontinuity is in $X$ or not - both are allowed. Since the orbits of $\beta^-$ and $\beta^+$ both end up in the same periodic cycle of intervals, these points have `corresponding dynamics'. This is a fairly strong property. One of the things it implies is the following restriction on the structure of every interval $J$ of $X$:

\begin{lemma}
\label{lemma:corr-rj-structure}    
Let $T$ be an eventually periodic map that has the correspondence property. Then for every interval $J$ of $X$, there exists a discontinuity $\beta^+ \in X$ such that $J$ is contained in the union of iterates of $P(\beta^+)$.
\end{lemma}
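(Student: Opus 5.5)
We argue directly from the structure of interval components given by Lemma~\ref{lem:J-dynamics} together with eventual periodicity. Let $J = [x,y)$ be an interval component of $X$. Since $T$ is eventually periodic and $J \subset X$, every point of $J$ is periodic. To see this, note that $R_J$ is a bijection of $J$ (Lemma~\ref{lem:rj-facts}) and every point eventually lands on a periodic orbit. A point of $J$ whose orbit enters a periodic orbit meeting $J$ must itself be periodic, because $R_J$ is injective.

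By Lemma~\ref{lem:J-dynamics}(a), the left endpoint is $x = T^{k_1}(\beta^+)$ for some $\beta^+ \in X \cap \mathcal{C}$. Since $\beta^+ \in X$, it is periodic, so $P(\beta^+)$ is defined and $x$ lies on the orbit of $\beta^+$. Hence the cycle of intervals $O(P(\beta^+))$ contains a right-neighbourhood $[x, x+\delta)$ of $x$ inside $J$. The plan is to show that this cycle of intervals covers all of $J$.

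Let $K = J \cap \bigcup_{n} T^n(P(\beta^+))$, and let $z$ be the supremum of the connected piece of $K$ starting at $x$. Suppose, for contradiction, that $z < y$. The boundary points of the iterates of $P(\beta^+)$ are iterates of points landing on discontinuities or on $\partial I$ (Definition~\ref{def:max-per-int}). So $z$ is a point in the interior of $J$ that is an iterate of a signed discontinuity $\betas^-$ lying in $X$. We may take $\betas^-$ in the cycle of $\beta^+$, using the fact that the iterates of $P(\beta^+)$ are half-open intervals whose right ends are preimages of $-$-type discontinuities or of $1$; the right end $1$ is excluded since $T(I)$ stays away from it after a harmless reduction, or since $z<y\le 1$.

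Then $T^{l}(z)$ equals a discontinuity $\betas^-$ with $\betas^- \in X$ periodic. By the correspondence property, $\betas^+$ eventually lands in $P(\betas^-)$. The orbit of $\betas^-$ is contained in $O(P(\beta^+))$ because $z$ is a right endpoint of an iterate of $P(\beta^+)$, so $P(\betas^-)$ is itself one of the iterates of $P(\beta^+)$ (maximal periodic intervals along a common periodic orbit coincide up to iteration). Now $z^+$ is a point of $J$ whose orbit passes through $\betas^+$. Points just to the right of $z$ are periodic points of $J$, and for a small $\eta$ the interval $[z,z+\eta)$ maps continuously to $[\betas, \betas+\eta)$, which lies in $P(\betas^+)$. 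The orbit of $\betas^+$ enters $P(\betas^-)\subset O(P(\beta^+))$. Since the points of $[z,z+\eta)$ are periodic, their orbits return to $[z,z+\eta)$, and they share an itinerary, hence lie in one maximal periodic interval. Their orbit therefore meets $O(P(\beta^+))$, so $[z,z+\eta)$ itself lies in an iterate of $P(\beta^+)$ (a periodic point whose orbit enters a periodic cycle of intervals belongs to that cycle). This contradicts the maximality of $z$, so $z = y$ and $J \subset O(P(\beta^+))$.

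The main obstacle is the middle step: justifying that the right end $z$ of the covered piece arises from a discontinuity whose \emph{other} side lands back in the same cycle. This requires carefully identifying which signed discontinuity produces the boundary. It may be the $-$ side of some $\betas$ with $\betas^-\in X$, or its $+$ side, in which case $z^-$ lands on $\betas^-$ and the correspondence property is applied with the roles reversed. I would handle both sign cases symmetrically, and handle separately the degenerate case where the boundary comes from $\partial I$, which the hypothesis $T(I)\Subset \operatorname{int} I$ or the fact that $z$ is interior to $J$ should exclude.
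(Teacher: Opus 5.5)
Your proof is correct and is essentially the paper's argument. The paper also takes two adjacent maximal periodic intervals in $J$ lying in different cycles, observes that their common endpoint lands on a discontinuity $\beta$, and uses correspondence to force $\beta^-$ and $\beta^+$ into the same cycle; your ``grow from the left endpoint'' version does the same, while also making explicit the periodicity of points of $J$ and the choice of $\beta^+$ via Lemma~\ref{lem:J-dynamics}(a). The sign-case worry in your last paragraph is unnecessary: at the first landing time of $z$, the points $z^-$ and $z^+$ land on the two sides $\beta_*^-$ and $\beta_*^+$ of the same discontinuity, and $z^-$ is the side already in the cycle of $P(\beta^+)$.
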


Since $T$ is eventually periodic, each interval of $X$ already has the property of being `tiled' by periodic intervals. If we additionally assume correspondence, the lemma above says that there exists a $\beta^+ \in X$ such that all of these periodic intervals are iterates of $P(\beta^+)$.

\begin{proof}
Assume the contrary, that there is an interval $J$ of $X$ containing iterates of two periodic intervals with disjoint orbits. Let $P_1$ and $P_2$ be two touching periodic intervals in $J$ with this property. By maximality, the point at which they touch eventually lands on a discontinuity $\beta$. Thus $\beta^-$ and $\beta^+$ belong to periodic intervals with disjoint orbits, which contradicts the correspondence property.
\end{proof}

A map that has unstable number zero and satisfies the correspondence property is almost stable:

\begin{lemma}
\label{lemma:corr+u=a1a2m}
Let $T$ be an eventually periodic map. Assume that $U(T) = 0$ and that $T$ satisfies the correspondence property. Then $T$ also satisfies the A1, A2, and Matching properties.
\end{lemma}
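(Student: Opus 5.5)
The plan is to read off the structure of each interval component $J$ of $X$ from Lemma~\ref{lemma:corr-rj-structure}, and then to translate the three conditions A1, A2 and Matching into statements about how the discontinuities sit in the cycle of periodic intervals covering $J$. By that lemma, $J$ is contained in the union of the iterates $T^i(P)$, $0\le i<p$, of a single maximal periodic interval $P=P(\beta^+)$ with $\beta^+\in X$. By maximality (Definition~\ref{def:max-per-int}), the endpoints of each iterate of $P$ land on discontinuities of $T$ (or on $0,1$, which are excluded since $T(I)$ lies in the interior of $I$ after a harmless reduction).

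The hypothesis $U(T)=0$ gives two facts.
\begin{itemize}
\item Every cycle in $\mathcal{Z}$ is a singleton, so a periodic discontinuity in $X$ never lands on another discontinuity.
\item No discontinuity lies on $\partial X$, so every $\beta\in X\cap\mathcal{C}$ is an interior point of $X$, and both $\beta^-,\beta^+$ lie in $X$.
\end{itemize}
Combining these with correspondence, I will show the key structural claim: the $T$-orbit of $P$ up to its period meets $\mathcal{C}$ only at the single discontinuity $\beta$ (as the left endpoint of $P$). Its right endpoint $\beta_*^-$ is then forced to be a $-$-type point landing back on $\beta^-$. The justification is as follows. Any discontinuity hit by the boundary orbit of $P$ lies in $X$ and is periodic. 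It is therefore in the cycle of $\beta$ or forms its own cycle. Singleton cycles and correspondence then force both sides of it to belong to the same periodic orbit of intervals, hence to be the same point $\beta$. In particular, the iterates of $P$ tiling $J$ abut each other precisely at points of the orbit of $\beta$.

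With this structure I will treat the three conditions as follows.
\begin{itemize}
\item \emph{A1.} A point $a\in J$, followed until its return to $J$, traverses part of the $p$-cycle of tiles. Its orbit can only meet a critical point at a tile endpoint in the orbit of $\beta$. Since $\beta$ occurs once per period, it is met at most once before the return to $J$.
\item \emph{A2.} A boundary point $x$ of a non-trivial $J$ is of the form $T^k(\beta^+)$ or $T^k(\beta_*^-)$ by Lemma~\ref{lem:J-dynamics}. If $x$ hit a discontinuity before returning to $J$, that discontinuity would be $\beta$ itself. Because $\beta$ is interior to $X$, its two sides map into the same component, and I will derive that $x$ must be interior to $X$, a contradiction.
\item \emph{Matching.} The interior points of $J$ landing on $\mathcal{C}$ before return are exactly the tile junctions inside $J$ whose orbit reaches $\beta$ before returning. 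The first-return cycle of tiles passes through $\beta$ exactly once, so there is exactly one such junction when $J$ is non-trivial.
\end{itemize}

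The main obstacle I expect is this last counting step, and the A2 argument that depends on the same bookkeeping. One has to check carefully that the first return to $J$ of each tile happens before the tile's orbit could reach $\beta$ a second time. One also has to rule out the possibility that $J$ meets several disjoint arcs of the $p$-cycle with several junctions mapping to $\beta$. My first attempt will be to order the tiles in $J$ as they appear along the periodic cycle and show that $R_J$ acts on them as a rotation of the tiles. In that picture the only discontinuity of $R_J$ is the unique junction sent to $\beta$, which gives Matching and A1 simultaneously.
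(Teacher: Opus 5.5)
Your structural claim is correct: $U(T)=0$ together with correspondence forces every discontinuity met by the tile cycle of $P(\beta^+)$ to be $\beta$ itself. This is sharper than the paper, which keeps a possibly different $\beta_*$ and eliminates it inside its case analysis. Your A1 argument is also fine.

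\textbf{The gap is in A2.} You infer that, because $\beta$ is interior to $X$, a boundary point $x$ landing on $\beta$ before returning must itself be interior to $X$. This inference is false. It never uses that $J$ is dynamically non-trivial, and for trivial components exactly this landing occurs. Take $r=3$, $(\beta_1,\beta_2)=(0.2,0.55)$ and $(\gamma_1,\gamma_2,\gamma_3)=(0.1,0.4,-0.5)$. Then:
\begin{itemize}
\item $X=[0.1,0.3)\cup[0.6,0.7)$, with one tile cycle $[0.2,0.3)\to[0.6,0.7)\to[0.1,0.2)$;
\item $T$ is eventually periodic, $U(T)=0$, and correspondence holds;
\item the trivial component $[0.6,0.7)$ satisfies $T^2(0.6^+)=\beta_1^+$, before its return at time $3$, although $\beta_1$ is interior to $X$.
\end{itemize}
Pulling $\beta_1^-$ back two steps inside $X$ gives $0.3^-$, not $0.6^-$. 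The two sides of $\beta$ lie in the same component, but their backward orbits run along different tiles translated by different amounts.

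\textbf{The missing idea is injectivity of $T$ on $X$,} equivalently the disjointness of the sets $T^k(J_j)$ for $0\le k<r_j$. If two distinct points of $J$ both reach $\beta^+$ before returning to $J$, then one is an iterate of the other before its return, which is absurd; the same holds for $\beta^-$. Combined with your structural claim, this gives A2 and Matching at once, as in the paper. If $J$ is non-trivial, some interior point of $J$ lands on a discontinuity before returning, necessarily on $\beta$. Hence neither $x^+$ nor $y^-$ can do so, and no second interior point can either.

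This also resolves your worry about several arcs of the $p$-cycle meeting $J$; you do not need to show that $R_J$ permutes the tiles as a rotation. Your remark that the first-return segments pass through $\beta$ exactly once is essentially the same fact. However, it also counts $x^+$ and $y^-$ as candidates, so it yields A2 only once you invoke non-triviality in this way.

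A minor point: the ``harmless reduction'' to $T(I)$ compactly inside $I$ is not available for a fixed $T$. You do not need it. Start instead from $\beta^-$: it lies in $X$ by $U(T)=0$, is periodic, and by correspondence lands in the cycle of $P(\beta^+)$. It is therefore a tile right endpoint, and the right endpoint of $P(\beta^+)$ lands on it.
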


\begin{proof}
Since $U(T) = 0$, no point in $X$ can land on two discontinuities, so A1 follows.

By the Lemma \ref{lemma:corr-rj-structure}, any interval $J$ of $X$ is tiled by intervals that are all iterates of a single maximal periodic interval $P(\beta^+)$. Since $U(T) = 0$, $\beta^+$ does not land on any other discontinuity. Since $P(\beta^+)$ is a maximal periodic interval, there exists a discontinuity $\betas^- \in X$ such that the right boundary point of $P(\beta^+)$ lands on $\betas^-$ at some time $k$. Additionally, $P(\betas^-) = T^k(P(\beta^+))$.

Assume that the left boundary point of a dynamically non-trivial interval $J$ lands on a discontinuity before returning to $J$. The case for the right boundary point is analogous. This discontinuity must be $\beta^+$. Since the interval is dynamically non-trivial, a point in the interior of $J$ also lands on a discontinuity of $T$ before returning to $J$. This discontinuity has to be either $\beta$ or $\betas$. In the first case, we get a contradiction with the fact that iterates of the components of a return map are disjoint up to the return time. In the second case, we get that $\beta^+$ lands on $\betas^+$, which is a contradiction with $U(T) = 0$. Thus both cases are impossible, so A2 follows.

If a dynamically non-trivial interval $J$ does not satisfy the Matching property, then there are two points in the interior of $J$ that land on a discontinuity before returning to $J$. These discontinuities can again be only $\beta$ or $\betas$, and we get a contradiction in the same way as for A2. Thus the Matching follows as well.
\end{proof}

Our goal now is to produce a finite sequence of sufficiently small perturbations, each of which decreases the unstable number of $T$, and also makes $T$ satisfy the correspondence property. By induction, this will result in a map that has unstable number zero and satisfies the correspondence property. We then do a final perturbation to get a map that also satisfies A3, which by Lemma \ref{lemma:corr+u=a1a2m} means it satisfies ACC and Matching. 

The main idea of the proof is to remove the discontinuities that contribute to the unstable number of $T$, i.e.\ are in a cycle inside of $X$ or in the boundary of $X$, or violate the correspondence property, while preserving the dynamical properties of points not in the orbit of these discontinuities. 

This result allows us to show that an arbitrarily small perturbation of $T$ satisfies the correspondence property and does not increase the unstable number:

\begin{lemma}[Perturbation to correspondence]
\label{lem:pert-corr}
Let $T$ be an eventually periodic map that does not satisfy the correspondence property. Then there exists an arbitrarily small perturbation $\Tilde{T}$ of $T$ that is eventually periodic, satisfies $U(\Tilde{T}) \le U(T)$ and the correspondence property.
\end{lemma}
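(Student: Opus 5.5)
We proceed by induction on the number of \emph{violating} discontinuities, i.e.\ those $\beta^+\in X$ (or $\beta^-\in X$) whose partner $\beta^-$ (resp.\ $\beta^+$) does not eventually land in $P(\beta^+)$ (resp.\ $P(\beta^-)$). Each step will be an arbitrarily small perturbation that keeps $T$ eventually periodic, does not increase $U$, does not create new violations, and removes at least one. Since there are finitely many discontinuities, finitely many steps give the statement.

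Fix a violating $\beta^+\in X$; the $-$ case is symmetric. The partner $\beta^-$ is eventually periodic, so it eventually lands in some periodic interval $P'$ whose orbit is disjoint from the orbit of $P:=P(\beta^+)$. We want to \emph{shrink} $P$ so that $\beta^+$ becomes non-periodic and its orbit enters a neighbouring periodic interval. Alternatively, we arrange that $\beta^-$'s landing interval merges with $P$. The tool is the Perturbation Lemma~\ref{lem:pert-lem} applied to the maximal periodic interval $J=P$, which is the case used only here.

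Parts~1 and~2 let us change the translation factor $Tr(P,r)$ from $0$ to a tiny $\epsilon^\gamma\ne 0$. We choose the sign so that $\beta^+$, after one period, lands strictly inside the orbit of the interval on the other side of $\beta$. This interval is on the side of $\beta^-$ and is eventually periodic, and we choose $\epsilon^\gamma$ rational relative to the other translation factors. Part~(b) keeps every critical connection outside $O(P)$ unchanged, and part~(a) keeps the connections inside $O(P)$. Then the remaining discontinuities retain their eventually periodic orbits.

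We then invoke Lemma~\ref{lem:disc-ep->map-ep}. Once every discontinuity of $\tilde T$ is eventually periodic, $\tilde T$ is eventually periodic and hence of finite type by Lemma~\ref{lem:ep-fin-type}. After the perturbation, the points of $P$ drift by $\epsilon^\gamma$ per return and exit into the orbit of $P'$ or of an adjacent periodic interval. Hence $\beta^+$ and $\beta^-$ end in the same periodic cycle, or $\beta^+$ leaves $X$, and the violation disappears. The unstable number does not increase, for two reasons: cycles can only be broken, since we created no new critical connections thanks to~(a),~(b) and the genericity of $\epsilon^\gamma$; and discontinuities can leave $X$ or its boundary but not enter $\partial X$, because the perturbation is small and all other connections are preserved.

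The main obstacle is verifying that the drifted points of $P$ really land in a periodic cycle. This requires that no new discontinuity becomes non-eventually-periodic, i.e.\ the hole-propagation problem in Figure~\ref{fig:propagation of a hole}. I would first try to handle it by restricting attention to $O(P)$. All parameters involved are rational multiples of a common length, so the perturbed return dynamics on the union of $O(P)$ with the adjacent periodic intervals is a rational map, and hence eventually periodic by the argument of Lemma~\ref{lem:ep-dense}.

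A secondary difficulty is checking that no other discontinuity becomes a new violator. The partner of any other periodic discontinuity either avoids $O(P)$, in which case it is unaffected, or lands in $O(P)$ and then follows $\beta^+$'s new fate. I would deal with this by choosing the sign of $\epsilon^\gamma$ consistently, so that all of $O(P)$ is absorbed into the single target cycle.
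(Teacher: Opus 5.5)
Your proposal is correct and follows essentially the same route as the paper. You apply the Perturbation Lemma to the maximal periodic interval $P(\beta^+)$ and change its period-$p$ translation factor from $0$ to $-\epsilon$, so that $\tilde\beta^+$ drifts into $[\tilde\beta-\epsilon,\tilde\beta)$; that interval follows $\beta^-$ into a periodic interval outside $O(P)$, and you preserve all connections via (a) and (b) and iterate. The only superfluous part is your ``main obstacle'': $\tilde T^p$ acts on $\tilde P$ as a translation by $-\epsilon$, so every point of $O(\tilde P)$ leaves through $[\tilde\beta-\epsilon,\tilde\beta)$ within about $|P|/\epsilon$ returns. Hence no hole-propagation issue arises, which is how the paper argues. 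The rationality argument can therefore be dropped; as stated it would fail in general anyway, since the parameters of $T$ and $|P'|$ need not be commensurable with $|P|$.
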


\begin{proof}
Assume that $T$ does not satisfy the correspondence property at some discontinuity $\beta$. Without loss of generality, assume that $\beta^+ \in X$, with the other case being analogous. Let $J$ be the periodic interval $P(\beta^+)$, and let $p$ be the minimal period of $\beta^+$. For a sufficiently small $\epsilon$, we apply the Perturbation Lemma \ref{lem:pert-lem}
to produce $\Tilde{T}$ with the following properties. Using 1., we set $\Tilde{T}r(\Tilde{\beta}^+, p) = -\epsilon$, which is possible since $Tr(\beta,p) = 0$, as $\beta$ is periodic with period $p$. This means that $\beta$ now lands $\epsilon$ to the left of itself at time $p$. We preserve all of the connections in (a) and (b),
which means that all of the periodic orbits of $T$ disjoint from $J$ persist for $\Tilde{T}$.

We may also choose $\epsilon$ sufficiently small so that the size of the perturbation is small enough that the itinerary of every discontinuity $\betas \in \mathcal{C}_2$ does not change until its landing time $l(\betas)$ into $X$. Thus for sufficiently small $\epsilon$, each $\betas \in \mathcal{C}_2$ that does not land into $O(J)$ still lands into the same periodic interval after perturbation, at the same time $l(\betas)$. This in particular holds for $\beta^-$, since $T$ by assumption does not satisfy the correspondence property at $\beta$. Then for a sufficiently small $\epsilon$, the entire interval $[\beta-\epsilon,\beta)$ still maps forward continuously up to time $l(\betas)$, and at this time it lands into a periodic interval whose orbit is disjoint from $\Tilde{J}$. Thus $\Tilde{\beta}^+$ is not contained in $\Tilde{X}$. Moreover, the same holds for every discontinuity that lands onto $\beta$ or into $\Tilde{J}$, as every point in $\Tilde{J}$ eventually gets mapped into $[\tilde{\beta}-\epsilon,\Tilde{\beta})$.

By construction, $\Tilde{T}$ is eventually periodic. This is because every discontinuity that landed into $O(J)$ now lands into the same period interval as $\Tilde{\beta}^-$, while every discontinuity with orbit disjoint from $O(J)$ still lands into the same periodic interval. This also implies $U(\Tilde{T}) \le U(T)$. 

Thus by induction, after a finite number of arbitrarily small perturbations, we have that $\Tilde{T}$ is eventually periodic, satisfies the correspondence property at every discontinuity and $U(\Tilde{T}) \le U(T)$.
\end{proof}

\subsection{Proof of Theorem \ref{thm:ep->acc+m}}

As mentioned before, our strategy is to remove from $X$ the discontinuities that are contained in non-trivial critical cycles. The perturbation we make depends on the location of the critical value corresponding to the critical point we want to remove. More precisely, we want the critical value to not be the first or the last one in an interval $J$ of $X$. Such a critical value can be found if the number of continuity intervals of $J$ is at least $4$. Intervals $J$ with fewer than $4$ continuity intervals all require different perturbations, leading to $4$ cases.

\begin{proof}[Proof of Theorem \ref{thm:ep->acc+m}]
By Lemma \ref{lem:pert-corr}, we may assume that $T$ satisfies the correspondence property. Assume first that there is a $\beta^+ \in X$ that lands on at least one other discontinuity, with the proof for $\beta^-$ being analogous. Let $J$ be the interval component of $X$ whose return map has the maximal number of continuity intervals among the interval components containing such a $\beta^+$. Let $N$ be the number of continuity intervals of the return map to $J$. If $N \ge 2$, we may assume that $\beta^+$ is in the interior of $J$. Indeed, if it is in the boundary, we can replace $\beta^+$ by the first critical point in the orbit of $J$ that is contained in the interior of an iterate of $J$. Recall that $\sigma$ denotes the permutation corresponding to the order in which the intervals of $J$ return to $J$, and that $\tau := \sigma^{-1}$.

\underline{Case 1 : $N > 3$}

Let $v_2$ be the second critical value of the return map $R_{J}$ to $J$, with respect to the order inside of $J$ and let $J_{\tau(2)}$ and $J_{\tau(3)}$ be the two continuity intervals of the return map such that the images of their boundary points touch at $v_2$. Let $a_p$, with $1 \le p \le N-1$, be the first critical point of $R_{J}$ in the orbit of $v_2$, which exists because $J$ is, by Lemma \ref{lemma:corr-rj-structure}, equal to the union of iterates of a single periodic interval. Let $P \ge 0$ be minimal time such that $R_{J}^P(v_2) = a_p$. Let $a$ and $b$ be the integers:
\begin{itemize}
    \item $a:= \# \{ R_{J}^t(v_2) \in J_{\tau(2)}; 0\le t < P \}$;
    \item $b := \# \{ R_{J}^t(v_2) \in J_{\tau(3)}; 0\le t < P \}$.
\end{itemize}
Let $\epsilon_1, \epsilon_2 > 0$ be sufficiently small such that for $b > 0$ we have that:
\[
\frac{a}{b+1} < \frac{\epsilon_2}{\epsilon_1} < \frac{a+1}{b},
\]
or if $b = 0$, such that:
\[
a \epsilon_1 < \epsilon_2.
\]
In both cases, $\epsilon_1$ and $\epsilon_2$ satisfy:
\[
-\epsilon_2 < -a \epsilon_1 + b \epsilon_2 < \epsilon_1.
\]
We may choose $\epsilon_1$ and $\epsilon_2$ arbitrarily small. Moreover, we may choose $\epsilon_1$ and $\epsilon_2$ to be of the form $r_1 |P(\beta^+)|$, $r_2 |P(\beta^+)|$, where $r_1$ and $r_2$ are sufficiently small rational numbers.

For $\epsilon$ sufficiently small, we apply the Perturbation Lemma \ref{lem:pert-lem}
to produce $\Tilde{T}$ with the following properties. By 1., we may set $\Tilde{a}_j = a_j$ for all $0 \le j \le N_0$. Using 2., we set $\Tilde{T}r(\Tilde{J}_{\tau(2)}, r_{\tau(2)}) - Tr(J_{\tau(2),r_{\tau(2)}}) = -\epsilon_1$ and $\Tilde{T}r(\Tilde{J}_{\tau(3)}, r_{\tau(3)}) - Tr(J_{\tau(3),r_{\tau(3)}}) = \epsilon_2$. We do not change the translation factors for any of the other $\Tilde{J}_j$ with $j \neq \tau(2), \tau(3)$. This means that $\Tilde{a}^-_{\tau(2)}$ lands $\epsilon_1$ to the left of the point $v_2$, while $\Tilde{a}^+_{\tau(3)-1}$ lands $\epsilon_2$ to the right. We preserve all of the connections in (a) and (b), which means that all of the periodic orbits of $T$ disjoint from $J$ persist for $\Tilde{T}$.

As we do not change any of the $a_j$'s, we have that $\Tilde{J}_j = J_j$ for all $1 \le j \le N$, and in particular $\Tilde{J} := J$. As we do not change the translation factors for any of the other $\Tilde{J}_j$ with $j \neq \tau(2), \tau(3)$, we still have that $\Tilde{R}_J \subset \Tilde{J}$.
The resulting perturbation for the return map with $5$ continuity intervals and the associated permutation $\sigma = (5 3 2 1 4)$ is shown in Figure \ref{fig:perturbation of a return map}.

Similarly to the proof of Lemma \ref{lem:pert-corr}, we may choose $\epsilon$ sufficiently small so that the size of the perturbation is small enough that the itinerary of every discontinuity $\betas \in \mathcal{C}_2$ does not change until its landing time into $X$. Again, this implies that each $\betas \in \mathcal{C}_2$ that does not land into $O(J)$ still lands into the same periodic interval after perturbation. Thus we only need to analyse $\Tilde{O}(\Tilde{J})$.

\begin{figure}
    \centering
    \includegraphics[width=\linewidth]{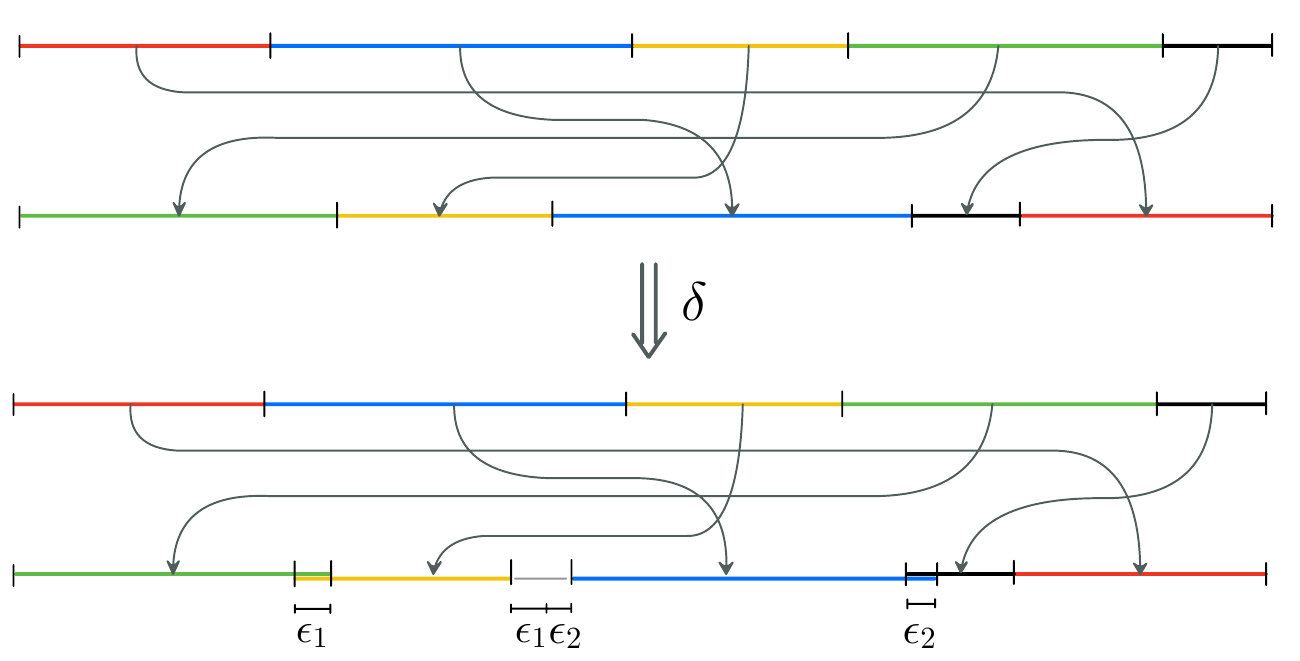}
    \caption{The change in $R_J$ after applying the perturbation in the Case $N > 3$ of the proof of Theorem \ref{thm:ep->acc+m} to a particular interval component $J$ of $X$ for which the return map has $5$ continuity intervals.}
    \label{fig:perturbation of a return map}
\end{figure}

We now show that the first discontinuity $\beta(p)$ that $a_p$ lands on before returning to $J$ is no longer in $X$ after the perturbation. Let us denote by $\Tilde{R}_{J}$ the $\Tilde{T}$ return map to $\Tilde{J}$, which is well-defined by construction. Since $\Tilde{R}_{J}(\Tilde{J}) \subset \Tilde{J}$, we have that $\Tilde{X} \cap \Tilde{O}(\Tilde{J})$ is equal to the $\Tilde{T}$-orbit of $\Tilde{X} \cap \Tilde{J})$ and $\Tilde{X} \cap \Tilde{J}$ = $\bigcap_{n=1}^{\infty} \Tilde{R}^n_{J}(\Tilde{J})$. The interval $[v_2-\epsilon_1,v_2+\epsilon_2)$ is not in $\Tilde{X}$, since it is not in the image of $\Tilde{R}_{J}$ and $\Tilde{X} \cap \Tilde{J}$ = $\bigcap_{n=1}^{\infty} \Tilde{R}^n_{J}(\Tilde{J})$. On the other hand, the intervals $[v_1-\epsilon_1, v_1)$ and $[v_3,v_3+\epsilon_2)$ now both have two $\Tilde{R}_{J}$-preimages, and every other point in $J$ only has a single $\Tilde{R}_{J}$-preimage. As $a_p$ is, by assumption, the first critical point in the orbit of $v_2$, we know that $v_2$ lands on it before it lands on $v_1$ or $v_3$, as they are critical values. Thus for sufficiently small $\epsilon_1$ and $\epsilon_2$, $[v_1-\epsilon_1, v_1)$ and $[v_3,v_3+\epsilon_2)$ are not contained in the $\Tilde{R}_J$-iterates up to and including time $P$ of $[v_2-\epsilon_1,v_2+\epsilon_2)$. Thus the $\Tilde{T}$-iterates of $[v_2-\epsilon_1,v_2+\epsilon_2)$ up to the $P$'th entry time of $v_2$ into $\Tilde{J}$ are not contained in $\Tilde{X}$. The image $\Tilde{R}^P_{J}([v_2-\epsilon_1,v_2+\epsilon_2))$ is by construction equal to the image $R_{J}([v_2-\epsilon_1,v_2+\epsilon_2))$ shifted by $-a \epsilon_1 + b \epsilon_2$. Thus by our choice of $a$, $b$, $\epsilon_1$ and $\epsilon_2$, $\Tilde{a}_p = a_p$ is still contained in this image. Thus $\Tilde{a}_p$ is not contained in $\Tilde{X}$.

The interval $[\Tilde{a}_{p-1},\Tilde{a}_{p+1})$ maps forward continuously up to the time $l$ that $\Tilde{a}_p$ lands on $\Tilde{\beta}(p)$. This means that the $\Tilde{T}^l([v_2-\epsilon_1,v_2+\epsilon_2))$ is disjoint from every $\Tilde{T}^k(\Tilde{J}_j)$, for every $j \neq p,p+1$ and $0 \le k < r_j$. Since $\Tilde{\beta} \subset \Tilde{T}^l([v_2-\epsilon_1,v_2+\epsilon_2))$, this implies $\Tilde{\beta} \notin \Tilde{X}$.

Assuming that $\Tilde{T}$ is eventually periodic, this shows that $U(\Tilde{T}) < U(T)$. Indeed, by construction, all of the critical cycles disjoint from $O(J)$ are preserved for $\Tilde{T}$. Similarly to the proof of Lemma \ref{lem:pert-corr}, the critical points that landed into $O(J)$, but were not contained in $O(J)$ (and therefore not in $X$) are still not contained in $\Tilde{X}$. Since $\Tilde{\beta} \notin \Tilde{X}$, the number of critical points in $\Tilde{O}(J_0) \cap \Tilde{X}$ is smaller by at least one compared to $O(J) \cap X$. Since we assumed $\Tilde{T}$ is eventually periodic, all of the remaining critical points in $\Tilde{O}(J_0)$ are eventually periodic. Some of them are possibly not periodic anymore, and there could be several cycles instead of a single one, but by the definition of the unstable number \ref{def:unst-num}, this would only further decrease $U(\Tilde{T})$. Thus $U(\Tilde{T}) < U(T)$, as claimed.

Finally, we prove that $\Tilde{T}$ is eventually periodic. By construction, $\Tilde{R}_{J_0}$ is conjugate, via the affine map that sends $J$ to $[0,1)$, to an $\ITM$ on $N$ intervals for which all parameters are rational. Indeed, by Lemma \ref{lemma:corr-rj-structure}, $\Tilde{J} = J$ is equal to the union of iterates of $P(\beta^+)$, so the length of every interval $\Tilde{J}_j = J_j$ is an integer multiple of $|P(\beta^+)|$, and by our choice of $\epsilon_1, \epsilon_2$, every translation factor associated to every continuity interval of $\Tilde{R}_{J}$ is also a rational multiple of $|P(\beta^+)|$. This means that every point in $\Tilde{J}$ is eventually periodic. Moreover, every critical point that lands into $\Tilde{O}(\Tilde{J})$ is therefore also eventually periodic. Thus every critical point of $\Tilde{T}$ is eventually periodic, which by Lemma \ref{lem:disc-ep->map-ep} implies that $\Tilde{T}$ is eventually periodic.

\underline{Case 2 : $N = 3$}

This case is different because we only have two critical values of $R_{J}$ and thus we are unable to make the same perturbation as in Case 1 and still have that $\Tilde{R}_{J} \subset \Tilde{J}$ (with $\Tilde{R}_{J}$ defined as in Case 1). The perturbation in this case depends on the permutation $\sigma$ associated to $R_{J}$. We give the proof for $\sigma = (3 2 1)$, with the proof in every other case being analogous. In this case $v_1^- = R_{J}(y^{-})$ and $v_1^+ = R_{J}(a_1^{+})$.

Without loss of generality, we may assume that the first critical value of $R_{J}$ in the backward $R_{J}$-orbit of $\beta$ is $v_1$, with the other case being analogous. Let $P$ be such that $R_{J}^P(v_1) = \beta$, and $l$ such that $T^l(v_1) = \beta$. Define:

\begin{itemize}
    \item $a:= \# \{ R_J^t(v_1) \in J_3; 0\le t < P \}$;
    \item $b := \# \{ R_J^t(v_1) \in J_2; 0\le t < P \}$.
\end{itemize}
Once again, let $\epsilon_1, \epsilon_2 > 0$ be sufficiently, so that for $b > 0$:
\[
\frac{a}{b+1} < \frac{\epsilon_2}{\epsilon_1} < \frac{a+1}{b},
\]
or if $b = 0$:
\[
a \epsilon_1 < \epsilon_2.
\]
In both cases, we again have that:
\[
-\epsilon_2 < -a \epsilon_1 + b \epsilon_2 < \epsilon_1.
\]
Again, we may choose $\epsilon_1$ and $\epsilon_2$ to be of the form $r_1 |P(\beta^+)|$, $r_2 |P(\beta^+)|$, where $r_1$ and $r_2$ are sufficiently small rational numbers.

For $\epsilon$ sufficiently small, we apply the Perturbation Lemma \ref{lem:pert-lem}
to produce $\Tilde{T}$ with the following properties. By 1., we may set $\Tilde{a}_j = a_j$ for all $0 \le j \le 3$. Thus $\Tilde{J}_j = J_j$ for $1 \le j \le 3$, and $\Tilde{J} = J$. Using 2., we set $\Tilde{T}r(\Tilde{J}_{1}, r_{1}) - Tr(J_{1}, r_{1}) = -\epsilon_1$ and $\Tilde{T}r(\Tilde{J}_{2}, r_{2}) - Tr(J_2, r_2) = \epsilon_2$, and do not change $Tr({J}_3, r_3)$. This means that the intervals $[\Tilde{y}-\epsilon_1, \Tilde{y})$ and $[v_1, v_1+\epsilon_2)$ are not contained in the image $\Tilde{R}_J(\Tilde{J})$, while the points in the intervals $[v_2-\epsilon_1,v_2+\epsilon_2)$ have two $\Tilde{R}_J$-preimages. We preserve all of the connections in (a) and (b), which means that all of the periodic orbits of $T$ disjoint from $J$ persist for $\Tilde{T}$. Similar to Case 1., this implies that the dynamics of critical points that do not land into $O(J)$ remain the same, so we only need to analyse $\Tilde{O}(\Tilde{J})$.

By construction, $\Tilde{R}_{J}(\Tilde{J}) \subset \Tilde{J}$ and $\Tilde{R}_{J}: \Tilde{J} \to \Tilde{J}$ is conjugate to an $\ITM$ with rational parameters, so $\Tilde{T}$ is eventually periodic by Lemma \ref{lem:disc-ep->map-ep}. Similarly to Case 1, to prove that $U(\Tilde{T}) < U(T)$, it is enough to show that $\Tilde{\beta} \notin \Tilde{X}$.

Since $[\Tilde{y}-\epsilon_1, \Tilde{y})$ does not have a $\Tilde{R}_J$-preimage, it is not in $\Tilde{X}$. Its image $\Tilde{R}_J([\Tilde{y}-\epsilon_1, \Tilde{y})) = [v_1-\epsilon_1,v_1)$ is therefore also not contained in $\Tilde{X}$, so
$[v_1-\epsilon_1, v_1+\epsilon_2) \notin \Tilde{X}$. By our choice of $v_1$, we know that the $T$-iterates of $v_1$ land onto $\beta$ before landing onto $v_2$. Thus for sufficiently small $\epsilon, \epsilon_1, \epsilon_2$, the $\Tilde{T}$-iterates of $[v_1-\epsilon_1, v_1+\epsilon_2]$ up to and including time $l$ are not contained in $\Tilde{X}$. Recall that $\Tilde{T}^l([v_1-\epsilon_1, v_1+\epsilon_2]) = \Tilde{R}_{J}^P([v_1-\epsilon_1, v_1+\epsilon_2])$, and that the interval $\Tilde{R}_{J}^P([v_1-\epsilon_1, v_1+\epsilon_2])$ is equal to the interval $R_{J}^P([v_1-\epsilon_1, v_1+\epsilon_2])$ shifted by exactly $-a \epsilon_1 + b \epsilon_2$. By our choice of $\epsilon_1$ and $\epsilon_2$, $\Tilde{T}^l([v_1-\epsilon_1, v_1+\epsilon_2])$ therefore contains $\Tilde{\beta}$, so $\Tilde{\beta} \notin \Tilde{X}$.

\underline{Case 3 : $N = 2$}

By our choice of $J$, we know that $\beta^+$ is contained in the interior of $J$ and that it lands on at least one other critical point before returning to $J$. Let $\betas^+$ be the first critical point in the $T$-orbit of $\beta$, and let $l$ be the landing time. Since $N=2$, $\beta$ is the only point in the interior of $J$ that lands on a critical point before returning to $J$.

For $\epsilon$ sufficiently small, we apply the Perturbation Lemma \ref{lem:pert-lem}
to produce $\Tilde{T}$ with the following properties. By 1., we set $\Tilde{\beta} = \Tilde{a}_1 = a_1 = \beta$, $\Tilde{a}_0 = a_0$ and $\tilde{a}_2 = a_2$. Thus $\Tilde{J}_j = J_j$ for $1 \le j \le 2$, and $\Tilde{J} = J$. Using 2., we set $\Tilde{T}r(\Tilde{J}_{1}, r_{1}) - Tr(J_{1}, r_{1}) = -\epsilon$ and $\Tilde{T}r(\Tilde{J}_{2}, r_{2}) - Tr(J_2, r_2) = \epsilon$. This means that the intervals $[\Tilde{x}, \Tilde{x}+\epsilon)$ and $[\Tilde{y}-\epsilon, \Tilde{y})$ are not in the image $\Tilde{R}_J(\Tilde{J})$, and are therefore not in $\Tilde{X}$. 

We preserve all of the critical connections in (a), except the landing of $\beta^+$ onto $\betas^+$, where we set $\Tilde{T}^l(\Tilde{\beta}) - \Tilde{\betas} = \epsilon$. This implies that all critical points $\beta$ landed on before returning to $J$ now land onto $\Tilde{x}^+$, and are therefore not in $\Tilde{X}$. In particular $\Tilde{\beta}_*^+ \notin \Tilde{X}$ Using (b), we preserve all of the critical connections outside of $O(J)$, which similarly as in Cases 1 and 2 implies that the dynamics outside of $O(J)$ do not change, so we only need to analyse $\Tilde{O}(\Tilde{J})$.

We may choose $\epsilon$ to be equal to a sufficiently small rational multiple of $|P(\beta^+)|$. This implies that the return map $\Tilde{R}_J$ to $\Tilde{J}$ is conjugate to an $\ITM$ on two intervals with rational parameters, so $\Tilde{T}$ is eventually periodic by Lemma \ref{lem:disc-ep->map-ep}. As $\Tilde{\beta}_*^+ \notin \Tilde{X}$, we have that $U(\Tilde{T}) < U(T)$.

\underline{Case 4 : $N = 1$}

In this case, $\beta^+$ is in the boundary of $J$, and $J = P(\beta^+)$, since $N=1$ is the maximal number of continuity intervals of the return map for an interval component of $X$ that contains a critical point that lands on at least one other critical point. Let $\betas^+$ be the first discontinuity $\beta^+$ lands on before returning to $J$ and let $l$ be the landing time.

Since $T$ satisfies the correspondence property, $\beta^-$ eventually lands into $J$. Let $r'_1$ be the landing time of $\beta^-$ into $\Tilde{J}$ and let $r'_2 := r_1$ be equal to the return time of $\beta$ to $J$. 

For $\epsilon$ sufficiently small, we apply the Perturbation Lemma \ref{lem:pert-lem}
to produce $\Tilde{T}$ with the following properties. Using 1., we set $\Tilde{\beta} = \Tilde{a}_0 = a_0 = \beta$ and $\Tilde{a}_1 = a_1$, which again implies $\Tilde{J} = J$. Using 2., we set $\Tilde{T}r(\Tilde{J}, r_1) - Tr(J, r_1) = -\epsilon$. This means that $\Tilde{T}^{r_1}(\Tilde{\beta}^+)$ is $\epsilon$ to the left of $\Tilde{\beta}$. We preserve all of the critical connections in (a), except the landing of $\beta^+$ onto $\betas^+$, where we set $\Tilde{T}^l(\Tilde{\beta}) - \Tilde{\betas} = \epsilon$. This implies that all critical points $\beta$ landed on before returning to $J$ now land $2\epsilon$ to the left of $\Tilde{\beta}$. Using (b), we preserve all of the critical connections outside of $O(J)$, which, similar to other cases, implies that the dynamics outside of $O(J)$ do not change, so we only need to analyse $\Tilde{O}(\Tilde{J})$.

For sufficiently small $\epsilon$, the interval $[\Tilde{\beta}-2\epsilon, \Tilde{\beta})$ still lands into $\Tilde{J}$ at time $r'_1$. Since $\Tilde{T}^{r_1}(\Tilde{\beta}^+) = \Tilde{\beta}-\epsilon$, this means that the return map to the interval $[\Tilde{\beta}-\epsilon, \Tilde{T}(\Tilde{\beta}^-))$ is well defined and isomorphic to a rotation. We may choose $\epsilon$ equal to a rational multiple of $|P(\beta^+)|$, which implies that $\Tilde{T}$ is eventually periodic, by Lemma \ref{lem:disc-ep->map-ep}. Moreover, all critical points $\beta$ landed on before returning to $J$ land into $[\Tilde{\beta}-\epsilon, \Tilde{T}(\Tilde{\beta}^-))$ and are therefore not contained in $\Tilde{X}$. In particular, $\Tilde{\beta}_*^+ \notin X$, so $U(\Tilde{T}) < U(T)$.

Thus by induction, we may assume that $T$ satisfies the correspondence property and that there are no critical points in $X$ that land on different critical points. Assume now that there exists a discontinuity $\beta^+$ in the boundary of some interval component $J$ of $X$, with the case for $\beta^-$ being analogous. Since $T$ satisfies the correspondence property, $J = P(\beta^+)$, because otherwise $\beta^+$ would have to land on some other discontinuity. Then we may make exactly the same perturbation as in Case $N=1$, except that now there are no connections to change or keep in part (a) of the Perturbation Lemma \ref{lem:pert-lem}. Thus if $r_1$ be the time at which $\beta^-$ lands into $J_0$, and $r_2$ the minimal period of $\beta^+$, then their return map to the interval $J' = [\Tilde{T}^{r_2}(\Tilde{\beta}^+), \Tilde{T}^{r_1}(\Tilde{\beta}^-))$ is now well defined and isomorphic to a rational rotation. Thus, $\Tilde{T}$ is still eventually periodic and there is one less discontinuity in the boundary of $\Tilde{X}$. We have also added $\beta^-$ to $\Tilde{X}$, but by construction, it is the only discontinuity in its cycle, and therefore $U(\Tilde{T}) < U(T)$.

Thus by induction, we may assume that $T$ is eventually periodic, satisfies the correspondence property and has unstable number zero. By Lemma \ref{lemma:corr+u=a1a2m}, it also satisfies properties A1, A2 and Matching. Assume now that it does not satisfy A3. This in particular means that $C_{\neg X} \neq \emptyset$. For an $\epsilon$ sufficiently small, we apply the Perturbation Lemma \ref{lem:pert-lem} to produce a perturbation $\Tilde{T}$ as follows. We preserve all the critical connections inside of $X$, which are by assumption all of the form  $T^p(\beta) = \beta$, where $p$ is the minimal period of $\beta$, and all critical connections of the form $T^n(\beta) = \betas$, where $\beta \notin X$ and $\betas \in X$. For every critical connection $T^n(\beta^+) = \betas^+$, resp. $T^n(\beta^-) = \betas^-$, such that $\beta, \betas \notin X$, we set $\Tilde{T}^n(\Tilde{\beta}^+) - \Tilde{\beta}_*^+ = \epsilon$, resp. $\Tilde{T}^n(\Tilde{\beta}^+) - \Tilde{\beta}_*^+ = \epsilon$. As before, we may choose $\epsilon$ sufficiently small so that there are no other dynamical changes.

As $\Tilde{T}$ by construction has no critical connections outside of $\Tilde{X}$, A3 must hold (Definition \ref{def:acc}). Since the dynamics of all other critical points do not change, A1, A2 and Matching still hold, so $\Tilde{T}$ is stable.
\end{proof}

\appendix
\section{Almost every finite type $\ITM$ is stable}
\label{appendix:ae-fin-stable}

In this appendix, we prove that the stable maps form a full measure subset of the set of all finite type maps, Theorem \ref{thm:ae-fin-stab} and derive an important consequence: a refinement of the Boshernitzan--Kornfeld Conjecture, Theorem \ref{thm:bk->irr-rot}.

Before getting into the proofs, we need to recall the product notation and dynamically defined vectors, introduced in \cite{drach2026transversalityintervaltranslationmaps}.

For each  $T$,  $x \in I$, $s=1,\dots,r$ and $n\ge 1$, let:
\[
k_s(x,n,T) := \# \{ T^j(x) \in I_s; 0\le j < n \}.
\]
Thus $k_s(x,n,T)$ represents the number of entries of $x$ into $I_s$ up to time $n$. When the map $T$ is clear from the context, we will usually simply write $k_s(x,n)$.

Let $W(r) := \R{}^{r} \oplus \R{}^{r-1}$ be the $(2r-1)$-dimensional real vector space that contains the \textit{coefficient vectors} (introduced below), and let $(\bm{e}_1, \dots, \bm{e}_r)$ and $(\bm{f}_1, \dots, \bm{f}_{r-1})$ be the canonical bases for $\R{}^{r}$ and $\R{}^{r-1}$, respectively. Recall that $\ITM(r)$ is the parameter space of $\ITM$s on $r$ intervals, and that it is a convex polytope contained in $\mathbb{R}^{2r-1}$. We call the elements of this space \textit{parameter vectors}. These vectors also have canonical coordinates coming from the ambient space $\mathbb{R}^{2r-1}$. We will use the shorthand $(\gamma \, \beta)$ for a parameter vector $(\gamma_1 \dots \gamma_r \, \beta_1 \dots \beta_{r-1})$.

Let $\langle \cdot, \cdot \rangle$ be the standard scalar product on $\mathbb{R}^{2r-1}$. Since $W(r) = \R{}^{r} \oplus \R{}^{r-1}$ and $\ITM(r)$ is a subset of $\mathbb{R}^{2r-1}$, it makes sense to write $\langle v, (\gamma \, \beta) \rangle =  \sum_{s=1}^r v_s \gamma_s + \sum_{s=1}^{r-1} v_{s+r} \beta_s$ for a coefficient vector $v = \sum_{s=1}^r v_s \bm{e}_s + \sum_{s=1}^{r-1} v_{s+r} \bm{f}_s \in W(r)$ and a parameter vector $(\gamma \, \beta) \in \ITM(r)$. We call $\langle v, (\gamma \, \beta) \rangle$ the \textit{product} of $v$ and $(\gamma \, \beta)$.

Let $J = [x^{+},y^{-}]$ be an interval component (Definition \ref{def:comp-int}) of $X$. Recall that the return map $R_J$ to $J$ is well-defined (Lemma \ref{lem:lin-dep-rj}) and that there are finitely many points $a_{1}, \dots, a_{N-1}$ in the interior of $J$ that land on discontinuities before returning to $J$. Let $J_1, \dots, J_{N}$ be the continuity interval of $R_J$, and let $r_j$ be the return time of $J_j$ to $J$. Let $a_{0}^{+} := x^+$ and $a_{k}^{-} := y^-$ be the boundary points of $J$. For each $1 \le j < N$, let $m_{j}^{+}$ be the number of discontinuities that $a_{j}^{+}$ lands on before returning to $J$ and, for $1 \le k \le m_{j}^{+}$, let $\beta^{+}(j,k)$ be the $k$-th discontinuity along the orbit up to return time to $J$ of $a_j^{+}$. Define $m_{j}^{-}$ and $\beta^{-}(j,k)$ analogously, for $1 \le j < N$ and $1 \le k \le m_{j}^{-}$. For each $1 \le j < N$, the points $\beta^{+}(j,1)$ and $\beta^{-}(j,1)$ are the $+$ and $-$ part of a single discontinuity, which we denote by $\beta(j)$.

In what follows, it will be useful to denote by $\text{ind}(\beta) \in \{1, \dots, r-1\}$ the index of a discontinuity $\beta$ with respect to this order inside $I$.

We now define three types of dynamically defined vectors associated to the orbit of $J$. The first ones are the \textit{first landing vectors}, that correspond to the first time the points $a_1^J, a_2^J, \dots, a_{N_J-1}^J$ land on discontinuities of $T$:

\begin{definition}[First landing vectors]
\label{def:lan-vec}
For $1 \le j < N$, let $l_{j}$ be the landing time of $a_{j}$ to $\beta(j)$ and let $L_{j} \in W(r)$ be the associated vector, called the \emph{first landing vector}:

\[
L_{j} \coloneqq \left(\sum_{s = 1}^r k_s(a_j, l_{j}) \, \bm{e}_s, \, - \bm{f}_{\text{ind}(\beta(j))}\right).
\]
\end{definition}

If $a_j$ is a discontinuity of $T$, i.e.\ if the landing time $l_j$ is zero, then $L_j = \left( 0, - \bm{f}_{\text{ind}(\beta(j))}\right)$ by definition. We call $L_j$ the first landing vector of $a_j$ to $\beta(j)$ because the following holds:

\begin{align*}
0 &= a_j + \sum_{s = 1}^r k_s(a_j, l_{j}) \gamma_s - \beta(j) \\
&= \left(\sum_{s = 1}^r k_s(a_j, l_{j}) \gamma_s - \beta(j)\right) + a_j \\
&= \langle L_j, (\gamma \, \beta) \rangle + a_j.
\end{align*}

The second type of vectors we need to consider are the \textit{critical connection vectors}, which correspond to landings of discontinuities in the orbit of $J$ onto other discontinuities (which are thus also in the orbit of $J$).

\begin{definition}[Critical connection vectors]
\label{def:cc-vec}
For each $1 \le j < N$ and $1 \le k < m_{j}^{+}$, let $q^{+}(j,k)$ be the landing time of $\beta^{+}(j,k)$ to $\beta^{+}(j,k+1)$ and let $C^{+}(j,k) \in W(r)$ be the associated vector, called the \emph{critical connection vector}:
\[
C^{+}(j,k) \coloneqq \left(\sum_{s=1}^r k_s(\beta^{+}(j,k), q^{+}(j,k)) \, \bm{e}_s, \, \bm{f}_{\text{ind}(\beta^{+}(j,k))} - \bm{f}_{\text{ind}(\beta^{+}(j,k+1))}\right).
\]
\end{definition}
The following holds by construction:
\begin{align*}
0 &= \beta^{+}(j,k) + \sum_{s=1}^r k_s(\beta^{+}(j,k), q^{+}(j,k)) \gamma_s - \beta^{+}(j,k+1) \\
&= \sum_{s=1}^r k_s(\beta^{+}(j,k), q^{+}(j,k)) \gamma_s + \beta^{+}(j,k) - \beta^{+}(j,k+1) \\
&= \langle C^{+}(j,k), (\gamma \, \beta) \rangle.
\end{align*}
Define $C^{-}(j,k) \in W(r)$ analogously for $1 \le j < N$ and $1 \le k < m_{j}^{-}$.

The third type of dynamical vectors we need to consider is the \textit{return vector}, which corresponds to the return of points $a_1^{-}, a_1^{+}, \dots, a_{N-1}^{+}, a_{N-1}^{-}$ to the interval $J$.

\begin{definition}[Return vectors]
\label{def:ret-vec}
For each $1 \le j < N$, let $r_{j}^{+}$ be the time at which $\beta^+(j,m_{j}^{+})$ lands into $J$ and let $R_{j}^{+} \in W(r)$ be the associated vector, called the \emph{return vector}:
\[
R_{j}^{+} \coloneqq \left(\sum_{s=1}^r k_s(\beta^{+}(j,m_{j}^{+}), r_{j}^{+}) \, \bm{e}_s, \, \bm{f}_{\text{ind}(\beta^{+}(j,m_{j}^{+}))}\right).
\]
\end{definition}
By definition, the following holds:
\[
\langle R_j^{+}, (\gamma \, \beta) \rangle = \sum_{s=1}^r k_s(\beta^{+}(j,m_{j}^{+}), r_{j}^{+}) \gamma_s + \beta^+(j,m_{j}^{+}) \in J.
\]
Note that also:
\begin{equation}
\label{eq:R-vec-property}
R_J(a_{j}^+) = \langle R_j^{+}, (\gamma \, \beta) \rangle 
\end{equation}
since $a_{j}$ lands on $\beta^+(j,m_{j}^{+})$. Define $r_{j}^{-}$ and $R_{j}^{-} \in W(r)$ analogously for $1 \le j < N$.

For the boundary points $x^{+} = a_0^{+}$ and $y^- = a^{-}_{N}$, the definitions of the corresponding dynamical vectors depend on whether they land on discontinuities of $T$ before returning to $J$ or not. If $a_0^{+}$ lands on a discontinuity of returning to $J$, then we may analogously as for other $a_j^{+}$, where $1 \le j < N$, define the vectors $L_0$, $C^{+}(0,k)$ and $R^{+}_0$. In the case when $a_0^{+}$ does not land on a discontinuity of $T$ before returning to $J$, we only define the return vector $R_0^{+}$ to $J$:

\[
R_0^{+} := \left(\sum_{s=1}^r k_s(a_0^{+}, r_{0}^{+}) \, \bm{e}_s, \, 0 \right),
\]
where $r_0^+$ is the return time of $a_0^+$ to $J$. Note that in this case:
\[
\langle R_0^{+}, (\gamma \, \beta) \rangle = R_J(a_{0}^+) - a_0.
\]
Analogously, if $a_{N}^{-}$ lands on a discontinuity of $T$, we may define the vectors $L_{N}, C^{-}(N,k)$ and $R^{-}_{N}$. If $a_{N}^{-}$ does not land on a discontinuity of $T$ before returning to $J$, then define $R_{N}^{-}$ analogously as for $a_0^{+}$.

Because the definitions of the dynamical vectors associated to the boundary points of the interval $J$ are different depending on whether they land on discontinuities of $T$ or not, this leads to different statements of Lemma \ref{lem:lin-dep-rj}, depending on whether these landings happen or not. For simplicity, we will assume that these boundary points land on discontinuities of $T$, because this case is more complicated, and we state Lemma \ref{lem:lin-dep-rj} with this assumption.

\begin{lemma}[Linear Dependence of Return Map Vectors]
\label{lem:lin-dep-rj}
Let $T \in \ITM(r)$
, and let $J$ be an interval component of $X$. Assume that there exist real coefficients $\alpha_{j}, \alpha_j^{+}, \alpha_j^{-}, \alpha^{+}(j,k), \alpha^{-}(j,k)$ such that:
\begin{align}
\label{eq:lin-dep-sum}
\begin{split}
&\sum_{j=0}^{N-1} \left( \sum_{k=1}^{m_j^{+}-1} \alpha^{+}(j,k) C^{+}(j,k) + \alpha_j^{+} R_j^{+} \right) \\ 
+&\sum_{j=1}^{N} \left( \sum_{k=1}^{m_j^{-}-1} \alpha^{-}(j,k) C^{-}(j,k) + \alpha_j^{-} R_j^{-} \right) \\
+& \left( \sum_{j=0}^{N} \alpha_j L_j \right) = 0.    
\end{split}
\end{align}
Then the following equalities hold:
\begin{align}
\label{eq:lin-dep-equality1}
\begin{split}
&\alpha^{+}(j,1) = \dots = \alpha^{+}(j,m_j^{+}-1) = \alpha_j^{+} \\
&=-\alpha^{-}(j+1,1) = \dots = - \alpha^{-}(j+1,m_{j+1}^{-}-1) = -\alpha_{j+1}^{-},
\end{split}
\end{align}
for all $0 \le j < N$. Moreover, $\alpha_j = \alpha^-(j,1) + \alpha^+(j,1)$ for all $1 \le j < N$.
\end{lemma}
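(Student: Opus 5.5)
Since the vectors live in $W(r)=\R^r\oplus\R^{r-1}$, the plan is to turn \eqref{eq:lin-dep-sum} into a statement about a linear functional on parameter space and then test it against suitably chosen deformations of $T$. Concretely, a relation $\sum \alpha_\bullet V_\bullet = 0$ in $W(r)$ is equivalent to the vanishing of $\langle \sum \alpha_\bullet V_\bullet, w\rangle$ for every direction $w\in\R^{2r-1}$. So it suffices to produce enough directions $w$ on which the individual products $\langle C^\pm(j,k),w\rangle$, $\langle R^\pm_j,w\rangle$, $\langle L_j,w\rangle$ can be computed independently. These directions will come from the Perturbation Lemma \ref{lem:pert-lem}, applied to the interval $J$ and viewed as giving curves $\Tilde T_t$ in $\ITM(r)$. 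The key observation is that each product is a dynamical quantity that is affine in the parameters:
\begin{itemize}
\item $\langle C^+(j,k),(\gamma\,\beta)\rangle$ is the displacement $\Tilde T^{q}(\Tilde\beta^+(j,k))-\Tilde\beta^+(j,k+1)$ of a critical connection;
\item $\langle L_j,(\gamma\,\beta)\rangle=-a_j$;
\item $\langle R_j^+,(\gamma\,\beta)\rangle$ is the return point $R_J(a_j^+)$ by \eqref{eq:R-vec-property}.
\end{itemize}
The itineraries defining these vectors are locally constant along the deformations, so the products stay affine along the curves.

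The first step is to unwind how these quantities relate. Fix $j$ with $1\le j<N$. Along the $+$-branch from $\beta(j)$, set $\delta^+_k$ to be the displacement of the $k$-th connection. Then $R_J(a_j^+)$ equals $-\langle L_j,\cdot\rangle$ plus the translation factor of $\Tilde J_{j+1}$, minus $\sum_k \delta^+_k$, up to corrections from the $f$-coordinates. Indeed, the telescoping of the $\bm f$ terms in the $C^+(j,k)$ is exactly what makes
\[
L_j + \sum_k C^+(j,k) + R^+_j
\]
equal the full translation vector of $J_{j+1}$, namely $\sum_s k_s(a_j,r_{j+1})\bm e_s$. The same holds on the $-$-side with $J_j$. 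I will therefore rewrite \eqref{eq:lin-dep-sum} in the coordinates
\[
(\text{connection displacements},\ \text{return positions},\ a_j\text{'s}),
\]
and the claim becomes that the only relations among them are these telescoping identities.

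The second step uses the Perturbation Lemma directly:
\begin{itemize}
\item Part (a) lets me vary each displacement $\delta^\pm(j,k)$ independently in a one-sided interval $[0,\epsilon)$ while keeping all other data fixed. Differentiating the pairing along this curve isolates one coefficient combination.
\item Parts 1.\ and 2.\ let me vary each $a_j$ by $\epsilon^\beta_j$ and each translation factor by $\epsilon^\gamma_j$ independently.
\end{itemize}
Then I compare two kinds of perturbation:
\begin{itemize}
\item Moving a single connection $\delta^+(j,k)$ shifts $R_J(a_j^+)$ by the same amount. This forces $\alpha^+(j,k)=\alpha_j^+$ for all $k$.
\item Changing the translation factor of $J_{j+1}$ (whose endpoints are $a_j^+$ and $a_{j+1}^-$) moves both $R_J(a_j^+)$ and $R_J(a_{j+1}^-)$ equally. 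This forces $\alpha^+_j+\alpha^-_{j+1}=0$, and likewise for the $\alpha^-(j+1,k)$.
\item Moving $a_j$ with translation factors fixed shifts both return points $R_J(a_j^\pm)$ and the landing quantity. This yields $\alpha_j=\alpha^-(j,1)+\alpha^+(j,1)$.
\end{itemize}
The boundary indices $j=0$ and $j=N$ are treated identically under the standing assumption that $x^+$ and $y^-$ land on discontinuities.

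The main obstacle is justifying that the differentiated relation really decouples, so that the varied quantities are independent coordinates. One subtlety is that the same discontinuity can occur in several branches, so a single parameter change may move several vectors at once. I would handle this by working directly with the identity $\langle \sum\alpha V,(\Tilde\gamma\,\Tilde\beta)-(\gamma\,\beta)\rangle=0$, expanding each product via its dynamical meaning. The Perturbation Lemma prescribes all those meanings simultaneously and independently. In particular, part (a) allows a one-sided choice and 2.\ allows two signs, so they span enough directions to read off each coefficient.
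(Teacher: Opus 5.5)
Your reduction is correct as linear algebra. Write $t_{j+1}:=\big(\sum_s k_s(a_j,r_{j+1})\bm{e}_s,\,0\big)$ for the visit-count vector of $J_{j+1}$. The identities $L_j+\sum_k C^{+}(j,k)+R_j^{+}=t_{j+1}$ and $L_j+\sum_k C^{-}(j,k)+R_j^{-}=t_j$ show that the lemma is equivalent to linear independence of the vectors $L_j$, $C^{\pm}(j,k)$, $t_j$. If the values $\langle L_j,w\rangle$, $\langle t_j,w\rangle$, $\langle C^{\pm}(j,k),w\rangle$ can be prescribed independently, on a set of $w$ whose image spans, then your coefficient comparisons do give \eqref{eq:lin-dep-equality1} and $\alpha_j=\alpha^-(j,1)+\alpha^+(j,1)$.

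The gap is where that freedom comes from. After linearisation, parts 1, 2 and (a) of Lemma~\ref{lem:pert-lem} say precisely that $w\mapsto\big(\langle L_j,w\rangle,\langle t_j,w\rangle,\langle C^{\pm}(j,k),w\rangle\big)$ is onto. That is, they say these vectors are independent, which is the statement to be proved. So your argument only shows that the lemma is equivalent to that part of the Perturbation Lemma; it does not prove it. The present paper gives no proof of the lemma: it is recalled, together with the dynamical vectors, from \cite{drach2026transversalityintervaltranslationmaps}. In that paper the dependence presumably runs the other way: a perturbation lemma whose content is exactly this surjectivity is the natural transversality consequence of independence statements like this one. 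Invoking it here is therefore circular.

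Two smaller points. First, Lemma~\ref{lem:pert-lem} assumes $T(I)$ is compactly contained in the interior of $I$, whereas the lemma is stated for every $T\in\ITM(r)$. Second, your first bullet conflates $\langle R_j^{+},\cdot\rangle$ with the true return point of $\tilde a_j^{+}$. If a connection is broken by $\delta$ with $\tilde a_j$ and the translation factor of $\tilde J_{j+1}$ held fixed, the return point does not move, while $\langle R_j^{+},\cdot\rangle$ moves by $-\delta$. It is this sign that gives $\alpha^{+}(j,k)=\alpha_j^{+}$.

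What is missing is a direct argument on the integer vectors themselves. Part of it is easy. The iterates $T^i(J_j)$, $0\le i<r_j$, are pairwise disjoint, so each signed discontinuity occurs at most once among all the branches. Hence the $\bm{f}_{\text{ind}(\beta(j))}$-coordinate of \eqref{eq:lin-dep-sum} involves only $L_j$ and the first vectors of the two branches starting at $\beta(j)^{\pm}$. It reads $-\alpha_j+\alpha^+(j,1)+\alpha^-(j,1)=0$, which is the last assertion.

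For \eqref{eq:lin-dep-equality1}, however, the $\bm{f}$-coordinates are not enough. A discontinuity whose $+$-side lies on the branch of $a_j^{+}$ and whose $-$-side lies on the branch of some $a_{j'}^{-}$ (possibly $j'=j$) contributes a single $\bm{f}$-equation coupling four coefficients from two different branches. So the $\bm{f}$-coordinates alone cannot force the coefficients to be constant along each branch. Separating them requires the $\bm{e}$-coordinates (the visit counts $k_s$) together with genuinely dynamical input. That is the real transversality content of the lemma, and your proposal replaces it with a citation of an equivalent reformulation.
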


In the case when the boundary point $a_0^+$ of $J$ does not land on a discontinuity of $T$, we need to remove $L_0$ and $C^+(0,k)$ from \eqref{eq:lin-dep-sum} and all of the coefficients for $j=0$ from the first line of \eqref{eq:lin-dep-equality1} except $\alpha_0^+$. A similar procedure should be done when $a_N^-$ does not land on a discontinuity.

Theorem \ref{thm:ae-fin-stab} will be a consequence of the following result, combined with the Characterisation of Stable Maps Theorem \ref{thm:stability=accm}.

\begin{theorem}
\label{thm:acc+m-full-m}
The set of finite types maps that do not satisfy at least one of the Matching or ACC conditions forms a measure-zero subset of $\ITM(r)$. 
\end{theorem}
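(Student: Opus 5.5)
The plan is to show that every finite type map failing ACC or Matching lies in a countable union of proper affine subspaces of $\mathbb{R}^{2r-1}$. Such a union has Lebesgue measure zero, and the theorem follows.

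The key observation is that the combinatorial data determining the relevant dynamics are discrete. These data consist of the itineraries of the finitely many points $a_j$, the discontinuities $\beta^\pm(j,k)$ they visit, and the landing and return times. There are only countably many choices. Once such combinatorial data $\mathcal{D}$ are fixed, every relevant relation becomes a linear equation in $(\gamma\,\beta)$ with integer coefficients:
\begin{itemize}
\item a critical connection gives $\langle C^\pm(j,k),(\gamma\,\beta)\rangle=0$;
\item a first landing gives $\langle L_j,(\gamma\,\beta)\rangle + a_j = 0$;
\item a return gives the value $R_J(a_j^\pm)$ as $\langle R_j^\pm,(\gamma\,\beta)\rangle$.
\end{itemize}

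The failures split into three groups.
\begin{itemize}
\item \emph{A1 fails.} Some point of $J$ visits two critical points before returning to $J$. This produces a critical connection vector $C^\pm(j,k)$, which is a nonzero vector since it contains a nonzero $\bm{f}$-component or a nonzero $\bm{e}$-component. So $(\gamma\,\beta)$ lies on the hyperplane $\langle C,(\gamma\,\beta)\rangle=0$.
\item \emph{A3 fails.} A discontinuity $\beta\notin X$ has a ghost tree containing itself. This gives a chain of critical connections of the form $T^n(\betass^-)=\betas^-$, so again a nonzero critical connection vector vanishes on $(\gamma\,\beta)$.
\item \emph{A2 or Matching fails.} A boundary point of a non-trivial $J$ lands on a discontinuity, or two interior points $a_i,a_j$ land on critical points before returning. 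Here I use the structure of $J$: by Lemma~\ref{lem:J-dynamics} the endpoints $x,y$ of $J$ are themselves iterates of critical points, so they are expressible as products with dynamical vectors. Bijectivity of $R_J$ on $J$ also gives identities: the lengths of the images $R_J(J_j)$ tile $J$, and consecutive images touch. Each touching $R_J(a_j^-)\sim R_J(a_{j'}^+)$, or each touching with an endpoint, gives an equation of the form $\langle R_j^- - R_{j'}^+,(\gamma\,\beta)\rangle=0$ or a similar combination involving $L$'s.
\end{itemize}
When $N\ge 3$ for Matching, or when a boundary point is additionally precritical for A2, the number of such equations exceeds what is forced identically. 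I intend to show that at least one of these combinations is a nonzero vector in $W(r)$. This is where Lemma~\ref{lem:lin-dep-rj} enters. Any identically vanishing combination of the $C^\pm$, $R^\pm$ and $L$ vectors must have the rigid coefficient pattern \eqref{eq:lin-dep-equality1}. With two or more interior precritical points, or an extra boundary landing, the touching equations produce a combination that does not have this pattern, so the combination is not zero. Its zero set is therefore a proper hyperplane.

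Next I take the union over all countably many choices of $\mathcal{D}$, and over which of the failure types occurs. Each piece of the exceptional set is contained in a proper hyperplane, so the exceptional set is null.

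The main obstacle is the A2/Matching step: isolating precisely which touching relation produces a linear combination outside the pattern of Lemma~\ref{lem:lin-dep-rj}. The cases need care, according to whether the boundary points land on discontinuities and whether the relevant $a_j$'s are themselves discontinuities.
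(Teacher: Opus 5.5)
Your outline matches the paper's. The exceptional maps are placed in a countable union of hyperplanes $\langle v,(\gamma\,\beta)\rangle=0$, each with $v$ a nonzero integer vector. A1 and A3 failures give a critical connection vector, which is nonzero because its $\bm{e}$-part has total mass equal to the positive landing time. A Matching failure, i.e.\ $N\ge 3$, is handled through the touching relations $\langle R^-_{\tau(i)}-R^+_{\tau(i+1)-1},(\gamma\,\beta)\rangle=0$, $1\le i\le N-1$, together with Lemma~\ref{lem:lin-dep-rj}.

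However, the step you flag as the main obstacle is only announced. ``More equations than are forced identically'' is a heuristic, not a proof that some touching vector is nonzero. The paper isolates this step as Lemma~\ref{lem:rj-equation-number}, and the missing mechanism is a short chain argument:
\begin{enumerate}
\item Suppose $\sum_i\alpha_i^*\bigl(R^-_{\tau(i)}-R^+_{\tau(i+1)-1}\bigr)=0$. Read it as a dependence among all return-map vectors, with $\alpha^-_{\tau(i)}=\alpha_i^*$, $\alpha^+_{\tau(i+1)-1}=-\alpha_i^*$, and every $L_j$ and $C^\pm(j,k)$ coefficient equal to $0$.
\item The first line of \eqref{eq:lin-dep-equality1} gives $\alpha^+_{k-1}=-\alpha^-_k$, which links the two endpoints of the same piece $J_k$. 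Each touching relation links the right end of the $i$-th image block to the left end of the $(i+1)$-st. Hence $\alpha_i^*=\alpha^-_{\tau(i)}=-\alpha^+_{\tau(i)-1}=\alpha_{i-1}^*$.
\item The chain is anchored at the left end of $J$. The vector $R^+_{\tau(1)-1}$ occurs in no touching relation, so its coefficient is $0$. This gives $\alpha_1^*=0$, and then $\alpha_i^*=0$ for all $i$.
\end{enumerate}
This yields $N-1$ independent integer vectors when both endpoints of $J$ land on discontinuities. In the remaining cases it yields at least $N-2\ge 1$: a relation through a non-landing endpoint picks up the pure translation vector $R^+_0$ or $R^-_N$, and that relation is set aside before running the same chain. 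No $L_j$ relations are needed.

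Second, A2 should not be grouped with Matching. An A2 failure is a critical connection, which is how the paper disposes of every ACC failure in one sentence. You already invoke Lemma~\ref{lem:J-dynamics}(a): an endpoint satisfies $x=T^{k_1}(\beta^+)$ with $\beta^+\in X\cap\mathcal{C}$. If $x$ lands on a discontinuity at time $t$ before returning to $J$, then $\beta^+$ lands on one at time $k_1+t$. This is a genuine connection with nonzero vector unless $k_1+t=0$, i.e.\ unless $x$ is itself a discontinuity. In that case use $x^+=R_J(a^+_{\tau(1)-1})$:
\begin{itemize}
\item if $\tau(1)>1$, the last discontinuity on the orbit of $a^+_{\tau(1)-1}$ connects to $x^+$ at a positive time;
\item if $\tau(1)=1$, then $x^+$ is a periodic discontinuity.
\end{itemize}
Either way an integer vector with nonzero $\bm{e}$-part vanishes on $(\gamma\,\beta)$; the right endpoint $y$ is symmetric. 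With A2 absorbed into the easy case, only Matching needs Lemma~\ref{lem:rj-equation-number}, and its bound $N-2\ge 1$ is exactly what the hypothesis $N\ge 3$ provides.
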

\noindent
To prove this theorem, we need a lemma about the number of linear equations the parameters $(\gamma \, \beta)$ defining a finite type map $T$ need to satisfy in order to have an interval component of $X$ with $n \ge 3$ continuity intervals of the return map:

\begin{lemma}
\label{lem:rj-equation-number}
Let $T \in \ITM(r)$ be such that there is an interval component $J$ of $X$ for which the return map $R_J$ has exactly $N \ge 3$ continuity intervals. Then there are at least $N-2$ linearly independent vectors $v_1, \dots, v_{N-2}$ with integer coefficients such that defining parameters $(\gamma \, \beta)$ of $T$ satisfy $\langle v_i, (\gamma \, \beta) \rangle = 0$, for all $1 \le i \le N-2$.
\end{lemma}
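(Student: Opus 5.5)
The plan is to build the $N-2$ equations from the fact that $R_J$ is a bijection of $J$ (Lemma \ref{lem:rj-facts}). Since $R_J$ is bijective, the images $R_J(J_1),\dots,R_J(J_N)$ tile $J$ exactly. The endpoints of these images are the points $R_J(a_j^+)$ and $R_J(a_j^-)$. The tiling forces touching relations: $N$ intervals tiling $J$ have $N-1$ interior junctions. At each junction the right endpoint of one image equals the left endpoint of the next, i.e.\ $R_J(a_{i}^-) = R_J(a_{k}^+)$ for suitable indices determined by the permutation $\sigma$.

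Each such equality will be rewritten, via \eqref{eq:R-vec-property} and the first landing and critical connection identities, as a linear relation with integer coefficients in $(\gamma\,\beta)$. Concretely, $R_J(a_j^\pm)=\langle R_j^\pm,(\gamma\,\beta)\rangle$ when $a_j$ is interior. Hence a junction between $a_i^-$ and $a_k^+$ yields
\[
\langle R_i^- - R_k^+,(\gamma\,\beta)\rangle = 0,
\]
which is an integer vector. Junctions involving the boundary points $x^+,y^-$ involve $a_0$ or $a_N$, which are not obviously linear in the parameters. I will discard those; at most two of the $N-1$ junctions involve the images of the boundary intervals' outer ends, or more cleverly I will use that $x,y$ are themselves of the form $T^{k}(\beta^\pm)$ by Lemma \ref{lem:J-dynamics}(a). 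Even crudely this leaves at least $N-2$ junction relations between interior points $a_j^\pm$, $1\le j\le N-1$. Alternatively, and more robustly, I will rewrite each relation by expressing $R_J(a_j^\pm)$ through the first landing vector: $R_J(a_j^+) = a_j + \langle L_j + \sum_k C^+(j,k)+R_j^+ , \cdot\rangle$-type telescoping. This way every relation lives in the span of the dynamical vectors.

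The main step is linear independence of the chosen $N-2$ vectors $v_i = R_{i}^- - R_{k(i)}^+$ (possibly with critical connection vectors added). Here I invoke Lemma \ref{lem:lin-dep-rj}. Suppose $\sum_i c_i v_i = 0$. This is a relation of the form \eqref{eq:lin-dep-sum} in which all $\alpha^\pm(j,k)$ for connection vectors and all $\alpha_j$ for landing vectors vanish, and each $\alpha_j^\pm$ equals $\pm c_i$ for the unique junction using $a_j^\pm$. The lemma forces $\alpha_j^+ = -\alpha_{j+1}^-$, and $\alpha_j = \alpha^-(j,1)+\alpha^+(j,1)$. Since no $L_j$ appears, $\alpha_j=0$; combined with the chains of equalities in \eqref{eq:lin-dep-equality1}, this gives $\alpha_j^+ = \alpha^+(j,1)=0$ whenever $m_j^+\ge 2$. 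More generally, I will track the coefficients along the junction graph. Each $a_j^\pm$ is used in at most one junction, so the coefficient relations propagate along a path. Because the boundary junctions were dropped, the path has a free end where a coefficient is forced to be zero, and hence all $c_i=0$.

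I expect the hardest part to be this bookkeeping. Two things need care: the case distinction of whether boundary points land on discontinuities (the modified form of Lemma \ref{lem:lin-dep-rj}), and making sure the dropped junctions leave the combinatorial graph acyclic, so that the propagation actually terminates in zero rather than a consistent nonzero cycle. The hypothesis $N\ge 3$ guarantees that $N-2\ge1$ interior junctions survive.
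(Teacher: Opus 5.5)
Your overall route is the paper's: the $N-1$ junction identities $R_J(a^-_{\tau(j)}) = R_J(a^+_{\tau(j+1)-1})$ are rewritten as $\langle R^-_{\tau(j)} - R^+_{\tau(j+1)-1},(\gamma\,\beta)\rangle = 0$. Independence then comes from feeding a putative dependency into Lemma \ref{lem:lin-dep-rj} and propagating $\alpha^+_i = -\alpha^-_{i+1}$ along the chain of images until a free end forces everything to vanish.

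\textbf{The gap is the count after you discard the boundary junctions.} The point $x^+ = a_0^+$ occurs in a junction whenever $\sigma(1)\neq 1$, and $y^- = a_N^-$ whenever $\sigma(N)\neq N$. These are two different junctions unless $\sigma(1)=\sigma(N)+1$. So dropping every junction that involves $x^+$ or $y^-$ leaves only $N-3$ relations in general, not $N-2$; your ``even crudely'' step is an arithmetic slip. Concretely, take $N=3$ and the order-reversing permutation $\sigma=(3\,2\,1)$, the case treated in Case 2 of the proof of Theorem \ref{thm:ep->acc+m}. The two junctions are $R_J(y^-)=R_J(a_1^+)$ and $R_J(a_2^-)=R_J(x^+)$, so nothing survives, while the lemma asks for one equation. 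Your independence argument is also explicitly tied to this dropping (``because the boundary junctions were dropped, the path has a free end''). So it does not cover the alternative where you keep them.

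\textbf{How to repair it.} You must keep the boundary junctions and redo the free-end bookkeeping.
\begin{itemize}
\item[(i)] If $x^+$ lands on a discontinuity before returning, then $R_J(x^+)=\langle R_0^+,(\gamma\,\beta)\rangle$, and $R_0^+$ is one of the vectors covered by Lemma \ref{lem:lin-dep-rj}. The chain still has free ends at the two endpoints used in no junction: $a^+_{\tau(1)-1}$ (whose image is $x$) and $a^-_{\tau(N)}$ (whose image is $y$). Hence, when both boundary points land, all $N-1$ vectors are independent.
\item[(ii)] If $x^+$ does not land, an arbitrary representation $x=T^{k_1}(\beta^+)$ from Lemma \ref{lem:J-dynamics}(a) is not enough, since Lemma \ref{lem:lin-dep-rj} gives no control over the resulting vector. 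You need specifically $x=R_J(a^+_{\tau(1)-1})=\langle R^+_{\tau(1)-1},(\gamma\,\beta)\rangle$. This gives the relation $\langle R^-_{\tau(\sigma(1)-1)} - R^+_0 - R^+_{\tau(1)-1},(\gamma\,\beta)\rangle=0$, with the non-landing $R_0^+$, which stays inside the span controlled by the modified Lemma \ref{lem:lin-dep-rj}.
\item[(iii)] The substitution in (ii) uses up the free end at $a^+_{\tau(1)-1}$. If $y^-$ also fails to land, the chain closes into a cycle with no free end, and the relations can be genuinely dependent; for $N=3$ the two vectors are negatives of each other. This is exactly the cycle issue you anticipated. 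It is resolved by dropping \emph{one} modified relation, which restores a free end and leaves $N-2$ independent vectors, not by dropping all boundary relations.
\end{itemize}
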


\begin{proof}
Let $\sigma$ be the permutation associated to $R_J$ and let $\tau$ be its inverse. We have the following string of $N-1$ equalities:
\begin{align}
\label{eq:rj-eq}
\begin{split}
R_J(a^-_{\tau(1)}) &= R_J(a^+_{\tau(2)-1}) \\
R_J(a^-_{\tau(2)}) &= R_J(a^+_{\tau(3)-1}) \\
&\dots \\
R_J(a^-_{\tau(N-1)}) &= R_J(a^+_{\tau(N)-1}),    
\end{split}
\end{align}
where the $a_j$'s are the discontinuities of $R_J$ or the boundary points of $J$. Recall from \eqref{eq:R-vec-property} that for $j \neq 0$, we have that $R_J(a_j^+) = \langle R^{+}_{j}, (\gamma \, \beta) \rangle$. Similarly, $R_J(a_j^-) = \langle R^{-}_{j}, (\gamma \, \beta) \rangle$ for every $j \neq N$ (Definition \ref{def:ret-vec}). In the case when $a_0^+$ lands on a discontinuity before returning to $J$, we also have that $R_J(a_0^+) = \langle R^{+}_{0}, (\gamma \, \beta) \rangle$. Analogously, $R_J(a_N^-) = \langle R^{-}_{N}, (\gamma \, \beta) \rangle$ when $a_N^-$ lands on a discontinuity before returning to $J$. 
Let us first assume that both $a_0^+$ and $a_N^-$ land on discontinuities before returning to $J$. Then the equalities in \eqref{eq:rj-eq} can be written as:
\begin{align}
\label{eq:vector-eq}
\begin{split}
\langle R^{-}_{\tau(1)} - R^{+}_{\tau(2)-1}, (\gamma \, \beta) \rangle &= 0 \\
\langle R^{-}_{\tau(2)} - R^{+}_{\tau(3)-1}, (\gamma \, \beta) \rangle &= 0 \\
&\dots \\
\langle R^{-}_{\tau(N-1)} - R^{+}_{\tau(N)-1}, (\gamma \, \beta) \rangle &= 0
\end{split}
\end{align}
Assume that the $N-1$ vectors $R^{-}_{\tau(1)} - R^{+}_{\tau(2)-1}, \dots, R^{-}_{\tau(N-1)} - R^{+}_{\tau(N)-1}$ are linearly dependent with coefficients $\alpha_1^*, \dots, \alpha_{N-1}^*$:
\begin{equation}
\label{eq:lin-dep}
\sum_{j=1}^{N-1} \alpha_j^*(R^{-}_{\tau(j)} - R^{+}_{\tau(j+1)-1}) = 0.
\end{equation}
This linear dependence can be extended to the linear dependence of all vectors associated to the return map $R_J$ as follows. We set $\alpha_{\tau(j)}^- := \alpha_j^*$ and $\alpha_{\tau(j+1)-1}^+ := -\alpha_j^*$ for all $1 \le j \le N-1$, and $\alpha_{\tau(1)-1}^+ := 0, \alpha_{\tau(N)}^- := 0$. We set  $\alpha_j, \alpha^{+}(j,k), \alpha^{-}(j,k)$ all equal to zero.

Then by part \eqref{eq:lin-dep-equality1} of Lemma \ref{lem:lin-dep-rj} we have that $\alpha^-_{\tau(j)} = -\alpha^+_{\tau(j)-1}$. By definition $\alpha^+_{\tau(j)-1} = -\alpha^*_j = -\alpha^-_{\tau(j)}$, and therefore $\alpha^*_j = \alpha^-_{\tau(j)} =-\alpha^+_{\tau(j)-1} = \alpha^-_{\tau(j-1)} = \alpha^*_{j-1}$. Therefore all of the $\alpha^*_j$ are equal. Since $\alpha^*_1 = -\alpha^+_{\tau(1)-1} = 0$, we have that $\alpha_j^* = 0$ for all $1 \le j < N$. Thus in this case we get that $N-1$ linearly independent vectors.

If $a_0^+$ does not land on a discontinuity, then $\langle R^{+}_{0}, (\gamma \, \beta) \rangle = R_J(a_0) - a_0$. Since $a_0 = R_J(a^+_{\tau(1)-1})$, we have that $R_J(a_0) = \langle R^{+}_{\tau(1)-1} + R^{+}_{0}, (\gamma \, \beta) \rangle$.
Thus we need to change the equality for $j = \sigma(1)-1$ in \eqref{eq:vector-eq} to:
\begin{equation}
\langle R^{-}_{\tau(\sigma(1)-1)} - R^{+}_{0} - R^{+}_{\tau(1)-1}, (\gamma \, \beta) \rangle = 0.
\label{eq:a_0^+}
\end{equation}
If $a_N$ does not land on a discontinuity, then we analogously need to change the equality for $j = \sigma(N)$ in \eqref{eq:vector-eq} to:
\begin{equation}
\label{eq:a_N^-}
\langle R_{N}^{-} + R_{\tau(N)}^{-} - R^+_{\tau(\sigma(N)+1)-1}, (\gamma \, \beta) \rangle = 0.
\end{equation}

Assume now that $a_0^+$ lands on a discontinuity, with the case for $a_N^-$ being analogous. We now take the subset of $N-2$ vectors in \eqref{eq:vector-eq} that does not include the vector from \eqref{eq:a_0^+}. Assume that these vectors are linearly dependent with coefficients $\alpha_{j}^*$ for $2 \le j < N$. We may again extend it to all vectors associated to the return map $R_J$ by setting $\alpha_{\tau(j)}^- := \alpha_j^*$ and $\alpha_{\tau(j+1)-1}^+ := -\alpha_j^*$ for all $2 \le j \le N-1$, $\alpha^-_{\tau(\sigma(1)-1)} = \alpha^{+}_{0} = \alpha^{+}_{\tau(1)-1} = 0$, and $\alpha_j = \alpha^{+}(j,k) = \alpha^{-}(j,k) = 0 =: \alpha^*_1$. In the case when $a_N^-$ does not land on a discontinuity, we set $\alpha_{\tau(N)}^- := \alpha^*_N$ because of \eqref{eq:a_N^-}, and otherwise $\alpha_{\tau(N)}^- = 0$. 

Then again by \eqref{eq:lin-dep-equality1} from Lemma \ref{lem:lin-dep-rj}, we have that $\alpha^*_j = \alpha^-_{\tau(j)} =-\alpha^+_{\tau(j)-1} = \alpha^-_{\tau(j-1)} = \alpha^*_{j-1}$, and since $\alpha^*_1 = 0$, all the $\alpha^*_j$ are equal to zero, so in this case we get $N-2$ linearly independent vectors.
\end{proof}

The last part of the proof where we reduce the set from $N-1$ to $N-2$ vectors might appear artificial, in the sense that a different proof could still give that all of the $N-1$ vectors from \eqref{eq:vector-eq} are linearly dependent, but it is a simple exercise to check that for $N=3$, the $2$ vectors we get when neither boundary point lands on a discontinuity are linearly dependent (in fact one is the negative of the other).

\begin{proof}[Proof of Theorem \ref{thm:acc+m-full-m}]
If a finite type map $T$ violates the ACC condition, then it has a critical connection (Definition \ref{def:acc}). Thus the defining parameters $(\gamma \, \beta)$ of $T$ must satisfy $\langle C, (\gamma \, \beta) \rangle = 0$, where $C$ is the dynamical vector with integer coefficients associated to this critical connection. If $T$ violates Matching, then the return map to one of the interval components of $X$ has at least three continuity intervals. By Lemma \ref{lem:rj-equation-number}, the parameters $(\gamma \, \beta)$ of any finite type $\ITM$ on $r$ intervals for which some return map to an interval component of $X$ at least three continuity intervals must satisfy a linear equation with integer coefficients. Thus in both cases, the parameters $(\gamma \, \beta)$ are contained in a zero measure subset of $\ITM(r)$ consisting of a countable union of hyperplanes in $\ITM(r)$ defined as the orthogonal complements to vectors with integer coefficients, so the result follows.
\end{proof}

We are now ready to prove:

\begin{theorem}
\label{thm:ae-fin-stab}
In $\ITM(r)$, the set of stable maps forms a full measure subset of the set of all finite type maps.
\end{theorem}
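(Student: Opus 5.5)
The plan is to combine Theorem \ref{thm:acc+m-full-m} with the Characterisation of Stability (Theorem \ref{thm:stability=accm}) and Lemma \ref{lem:stable-fin-type}. Let $\mathcal{F}(r) \subset \ITM(r)$ denote the set of finite type maps, and let $\mathcal{S}(r)$ be the set of stable maps as before. We must show that $\mathcal{S}(r) \subset \mathcal{F}(r)$ and that $\mathcal{F}(r) \setminus \mathcal{S}(r)$ has Lebesgue measure zero in $\ITM(r)$.

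The first step is the inclusion $\mathcal{S}(r) \subset \mathcal{F}(r)$, which is exactly Lemma \ref{lem:stable-fin-type}. For the second step, take $T \in \mathcal{F}(r) \setminus \mathcal{S}(r)$. Since $T$ is of finite type but not stable, Theorem \ref{thm:stability=accm} says $T$ fails at least one of the ACC or Matching conditions. So $\mathcal{F}(r) \setminus \mathcal{S}(r)$ is contained in the set of finite type maps violating ACC or Matching. By Theorem \ref{thm:acc+m-full-m}, that set is contained in a countable union of rational hyperplanes intersected with $\ITM(r)$, hence has measure zero. Therefore $\mathcal{S}(r)$ has full measure in $\mathcal{F}(r)$.

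The only point needing care is measurability, so that ``full measure subset'' makes sense. $\mathcal{S}(r)$ is open by Definition \ref{def:stable}, hence Borel. Then $\mathcal{F}(r)$ differs from the Borel set $\mathcal{S}(r)$ by a subset of a null set, so $\mathcal{F}(r)$ is Lebesgue measurable. Its measure equals that of $\mathcal{S}(r)$, which is what the statement requires.

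I expect no real obstacle, since the substantive content sits in Theorem \ref{thm:acc+m-full-m}, and in particular in Lemma \ref{lem:rj-equation-number}. Within that lemma, the delicate case is a Matching failure on an interval with $N = 3$ continuity intervals, where only $N - 2 = 1$ nontrivial integer equation is guaranteed. One nontrivial integer equation still suffices, since it already confines the parameters to a hyperplane. I would check explicitly that the resulting vector is nonzero, so that the hyperplane is proper.
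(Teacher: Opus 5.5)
Your proposal is correct and follows the paper's proof. Theorem \ref{thm:acc+m-full-m} makes the finite type maps that fail ACC or Matching a null set, and the Characterisation of Stability (Theorem \ref{thm:stability=accm}) shows that the finite type non-stable maps are exactly those maps. Your extra remarks are sound refinements that the paper leaves implicit: the inclusion $\mathcal{S}(r)\subset\mathcal{F}(r)$ via Lemma \ref{lem:stable-fin-type}, measurability via completeness of Lebesgue measure, and the fact that for $N=3$ the single vector is automatically nonzero by the linear independence asserted in Lemma \ref{lem:rj-equation-number}.
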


\begin{proof}
By Theorem \ref{thm:acc+m-full-m}, the set of finite type maps that violate either ACC or Matching forms a measure zero subset of $\ITM(r)$. Thus almost every finite type map satisfies both ACC and Matching. By the Characterisation of Stable Maps (Theorem \ref{thm:stability=accm}), a map is stable if and only if it satisfies ACC and Matching, so the proof follows.
\end{proof}

We say that a map $T$ `corresponds to an irrational rotation' if it is of finite type and the return map to every interval component of $X$ is isomorphic to an irrational circle rotation. A consequence of Theorem \ref{thm:acc+m-full-m} is the following refinement of the Boshernitzan--Kornfeld Conjecture: Almost every $\ITM$ corresponds to an irrational circle rotation. More precisely, we have the following result:

\begin{theorem}
\label{thm:bk->irr-rot}
Assume that the Boshernitzan--Kornfeld Conjecture holds. Then the set of all $\ITM$s that correspond to an irrational rotation forms a full measure subset of $\ITM(r)$.
\end{theorem}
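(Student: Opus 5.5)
Assuming the Boshernitzan--Kornfeld Conjecture, almost every $T \in \ITM(r)$ is of finite type. I would then intersect this full-measure set with the complements of countably many measure-zero sets, each removing one obstruction to $T$ corresponding to an irrational rotation. First, by Theorem~\ref{thm:acc+m-full-m}, almost every finite type map satisfies the ACC and Matching conditions. Second, I would discard the countable union of hyperplanes $\{\langle v,(\gamma\,\beta)\rangle = 0\}$ with $v$ a nonzero integer vector in $W(r)$. By the proof of Lemma~\ref{lem:ep-dense}, rational-type coincidences of this sort are exactly what produce periodic behaviour. It remains to show that every finite type map satisfying ACC and Matching, and lying off all these hyperplanes, corresponds to an irrational rotation.

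Fix such a $T$ and an interval component $J$ of $X$. If $J$ is dynamically trivial, then $R_J$ is the identity and $J$ consists of periodic points with some common return time $n$. Hence $Tr(J,n)=0$, i.e.\ $\sum_s k_s(x,n)\gamma_s = 0$ with a nonzero integer coefficient vector, which is excluded. So $J$ is dynamically non-trivial. By Matching there is exactly one interior point $a_1$ of $J$ landing on a discontinuity before returning, so $R_J$ has two continuity intervals $J_1=[x,a_1)$ and $J_2=[a_1,y)$. Since $R_J$ is a bijection of $J$ (Lemma~\ref{lem:rj-facts}), it is an exchange of two intervals, i.e.\ a circle rotation by angle $|J_2|/|J|$ after normalising.

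The remaining step is irrationality, and I expect it to be the main obstacle: the lengths $|J|$ and $|J_2|$ must be expressed as integer linear forms in $(\gamma\,\beta)$. By Lemma~\ref{lem:J-dynamics}(a), the endpoints $x$ and $y$ are iterates of signed discontinuities. The point $a_1$ satisfies $\langle L_1,(\gamma\,\beta)\rangle + a_1 = 0$ by Definition~\ref{def:lan-vec}. Every endpoint is therefore of the form $\langle w,(\gamma\,\beta)\rangle$ with $w$ an integer vector. If the rotation number were rational, $p/q$, we would get $q|J_2| - p|J| = 0$, which is a linear relation $\langle v,(\gamma\,\beta)\rangle=0$ with $v$ an integer vector. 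What must be checked is that $v\neq 0$, so that this relation is genuinely one of the excluded hyperplanes rather than an identity in the parameters.

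To verify $v\neq 0$, I would argue as follows. A rational rotation makes every point of $J$ periodic under $T$, with return time $n$ and zero translation factor. The resulting identity $\sum_s k_s\gamma_s = 0$ has nonnegative integer coefficients $k_s$ with $\sum_s k_s = n\ge 1$, so the coefficient vector is nonzero. Thus $T$ lies on an excluded hyperplane, giving a contradiction. This argument is cleaner than the length computation and is the one I would actually use; it also covers the trivial case uniformly. Finally, since the set of $\ITM$s considered is a countable intersection of full-measure sets, it has full measure, which proves the theorem.
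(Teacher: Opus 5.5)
Your proposal is correct and follows essentially the same route as the paper: use Theorem~\ref{thm:acc+m-full-m} to get ACC and Matching almost everywhere among finite type maps, observe that the return maps are then rotations or the identity, and discard the rational and identity cases because a periodic point of minimal period $p$ forces $\langle (k_1(x,p),\dots,k_r(x,p),0,\dots,0),(\gamma\,\beta)\rangle = 0$ with a nonzero integer vector. Your added details are consistent with the paper's argument. These are the explicit reduction from Matching to a two-interval exchange, the remark that $\sum_s k_s = n \ge 1$ makes the coefficient vector nonzero, and discarding all integer hyperplanes rather than only the periodic-point ones.
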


\begin{proof}
By Theorem \ref{thm:acc+m-full-m}, a full measure subset of finite type maps satisfies ACC and Matching. The return map to an interval component of $X$ of a map $T$ that satisfies ACC and Matching is either isomorphic to a rotation or the identity. If some return map to an interval component of $X$ is isomorphic to a rational rotation or is the identity, then $T$ must have a periodic point. Let $x$ be a periodic point of $T$ with minimal period $p$, so that $T^p(x) = x$. Then the defining parameters $(\gamma \, \beta)$ of $T$ satisfy $\langle C, (\gamma \, \beta) \rangle = 0$, where $C := (k_1(x,p), \dots, k_r(x,p), 0, \dots, 0)$. Thus the set of all maps $T$ for which some return map to an interval component of $X$ is isomorphic to a rational rotation or is the identity has measure zero in $\ITM(r)$. Thus the set of all finite type maps for which every return map to an interval component of $X$ is isomorphic to an irrational circle rotation is a full measure subset of the set of finite type maps. Thus if the Boshernitzan--Kornfeld Conjecture holds, and the set of all finite type maps has full measure in $\ITM(r)$, then so does the set of all maps corresponding to irrational rotations, and the result follows.
\end{proof}

\addcontentsline{toc}{section}{References}
\printbibliography

\end{document}